\theoremstyle{plain}
\newtheorem{theorem}{Theorem}[section]
\newtheorem{proposition}[theorem]{Proposition}
\newtheorem{corollary}[theorem]{Corollary}
\theoremstyle{definition}
\newtheorem{example}[theorem]{Example}
\newtheorem{remark}[theorem]{Remark}
\numberwithin{equation}{theorem}
\newtheorem{claim}[theorem]{Claim}
\begin{document}

\title[Vector fields and moduli of canonically pol. surfaces in char.  $P>0$.]{Vector fields and moduli of canonically polarized surfaces in positive characteristic.}
\author{Nikolaos Tziolas}
\address{Department of Mathematics, University of Cyprus, P.O. Box 20537, Nicosia, 1678, Cyprus}
\email{tziolas@ucy.ac.cy}

\subjclass[2010]{Primary 14J50, 14DJ29, 14J10; Secondary 14D23, 14D22.}


\keywords{Algebraic geometry, canonically polarized surfaces, automorphisms, vector fields, moduli stack, characteristic p.}

\begin{abstract}
This paper investigates the geometry of smooth canonically polarized surfaces defined over a field of positive characteristic which have a nontrivial global vector field, and the implications that the existence of such surfaces has in the moduli problem of canonically polarized surfaces.

In particular, an explicit real valued function $f(x)$ is obtained such that if $X$ is a smooth canonically polarized surface defined over an algebraically closed field of characteristic $p>0$ such that $K_X^2 <f(p)$, then $X$ is unirational and the order of its algebraic fundamental group is at most two. As a consequence of this result, large classes of canonically polarized surfaces are identified whose moduli stack is Deligne-Mumford, a property that does not hold in general in positive characteristic.
\end{abstract}

\maketitle

\section{Introduction}

The objective of this paper is to investigate the geometry of smooth canonically polarized surfaces with nontrivial global vector fields and to use the results of this investigation in order to study the moduli stack of canonically polarized surfaces in positive characteristic. An investigation with these objectives was initiated in~\cite{Tz17} where the case of surfaces $X$ with $K_X^2\leq 2$ has been studied.

A normal projective surface $X$ defined over an algebraically closed field is called canonically polarized if and only if $K_X$ is ample. Canonically polarized surfaces with canonical singularities appear as the canonical models of smooth surfaces of general type and they play a fundamental role in the classification problem of surfaces of general type. In fact, early on in the theory of moduli of surfaces of general type, it was realized that the moduli functor of surfaces of general type is not well behaved and that the correct objects to parametrize are not the surfaces of general type but instead their canonical models~\cite{Ko10}. Canonically polarized surfaces are the two dimensional analogs of stable curves.

The property that a smooth canonically polarized surface $X$ has a nontrivial global vector field is equivalent to the property that its automorphism scheme $\mathrm{Aut}(X)$ is not smooth. The reason is that the space of global vector fields of $X$ is canonically isomorphic to $\mathrm{Hom}(\Omega_X,\mathcal{O}_X)$, the tangent space at the identity of $\mathrm{Aut}(X)$. Moreover, it is well known that if $X$ is canonically polarized then $\mathrm{Aut}(X)$ is a zero dimensional scheme of finite type over the base field. Therefore the existence of nontrivial global vector fields on $X$ is equivalent to the non smoothness of $\mathrm{Aut}(X)$. Considering that $\mathrm{Aut}(X)$ is a group scheme and every group scheme in characteristic zero is smooth, non smoothness of $\mathrm{Aut}(X)$ can happen only in positive characteristic. Consequently a smooth canonically polarized surface can have non trivial global vector fields only when it is defined over a field of positive characteristic. Examples of such surfaces have been found by H. Kurke~\cite{Ku81}, W. Lang~\cite{La83} and N. I. Shepherd-Barron~\cite{SB96}.

The existence of nontrivial global vector fields on canonically polarized surfaces is intimately related to fundamental properties of their moduli functor and in particular their moduli stack. It is well known that in characteristic zero the moduli functor of canonically polarized surfaces with fixed Hilbert polynomial  has a separated coarse moduli space  which is of finite type over the base field $k$. Moreover, the moduli stack of stable surfaces is a separated, Deligne-Mumford stack of finite type~\cite{KSB88}~\cite{Ko97}. In positive characteristic, a coarse moduli space still exists~\cite{Ko97} but the moduli stack is not always Deligne-Mumford. The reason for this failure is the existence of smooth canonically polarized surfaces with non smooth automorphism scheme, or equivalently with nontrivial global vector fields~\cite[Theorem 4.1]{DM69}. In some sense then the existence of nontrivial global vector fields on canonically polarized surfaces is the obstruction for the moduli stack to be Deligne-Mumford.

The failure of the moduli stack to be Deligne-Mumford is rather unpleasant for the following reason. The ideal outcome of the classification problem is the existence of a universal family. A family from which all other families of objects in the moduli problem should be obtained after a base change. However, due to the presence of automorphisms of the parametrized objects, a universal family rarely exists. Then one relaxes the requirements of a universal family and studies the corresponding moduli stack. If the stack is Deligne-Mumford, then there exists a family $\mathcal{X} \rightarrow S$ such that for any variety $X$ in the moduli problem, there exists finitely many $s\in S$ such that $\mathcal{X}_s\cong X$, up to \'etale base change any other family is obtained from it by base change and that for any closed point $s \in S$, the completion $\hat{\mathcal{O}}_{S,s}$ pro-represents the local deformation functor $Def(X_s)$. In other words this family is universal in the \'etale toplogy and provides  a connection between the local moduli functor and the global one. 

This  investigation has two main objectives. 

The first objective is to find numerical conditions, preferably deformation invariant, which imply that the moduli stack of smooth canonically polarized surfaces is Deligne-Mumford. According to~\cite[Theorem 3.1]{Tz17} such conditions exist. However their existence is due to purely theoretical reasons and no explicit conditions were obtained so far.

The second  objective is  to describe the geometry of canonically polarized surfaces which have nontrivial global vector fields and consequently their moduli stack is not Deligne-Mumford. The hope is to obtain a good insight in the geometry of such surfaces that will allow the modification of the moduli problem in order to get a better moduli theory for these surfaces. 

From the existing examples of canonically polarized surfaces with nontrivial global vector fields and the case of surfaces with $K^2\leq 2$, one gets the feeling that surfaces with nontrivial global vector fields tend to be uniruled and simply connected~\cite{Tz17}. However non uniruled examples exist in characteristic 2~\cite{SB96}, but it is unknown if non uniruled examples exist in higher characteristics. 

The main result of this paper is the following. 


\begin{theorem}\label{intro-the-1}
Let $X$ be a smooth canonically polarized surface defined over an algebraically closed field of characteristic $p>0$. Suppose that $\mathrm{Aut}(X)$ is not smooth, or equivalently that there exists a nontrivial global vector field on $X$.  Suppose that 
\[
K_X^2 <\mathrm{min}\left\{\frac{1}{2}\sqrt[3]{\frac{p-3}{2}},\frac{p-3}{3960}\right\}. 
\]
Then $X$ is unirational, $b_1(X)=0$ and $|\pi_1(X)|\leq 2$.
\end{theorem}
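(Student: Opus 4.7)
The strategy I would follow is to convert the global vector field into a foliation and analyze the quotient surface. After replacing the given $D \in H^0(X, T_X)$ by a suitable iterate $D^{p^k}$, one reduces to the case where $D$ is $p$-closed, i.e. $D^p = \lambda D$ for some $\lambda \in k$. After rescaling, $D$ is either additive ($D^p=0$) or multiplicative ($D^p=D$), so it generates an infinitesimal $\alpha_p$- or $\mu_p$-action on $X$. The quotient $\pi\colon X \to Y := X/D$ is then a purely inseparable degree-$p$ morphism onto a normal projective surface. Write $\Delta$ for the divisorial part of the zero locus of $D$; the (finitely many) isolated zeros of $D$ give the singular points of $Y$.

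The key mechanism behind all three conclusions is the factorization of the absolute Frobenius through $\pi$: the composition $X \to Y \to X^{(p)}$ equals $F_X$, so any rational domination of $Y$ promotes to a rational domination of $X^{(p)}$, hence of $X$ itself, giving unirationality. From unirationality one extracts $b_1(X)=0$ by the standard Albanese functoriality argument (a rational surface dominating $X$ forces every morphism $X \to A$ to an abelian variety to be constant). Since purely inseparable morphisms are universal homeomorphisms, one also has $\pi_1^{\mathrm{et}}(X) \cong \pi_1^{\mathrm{et}}(Y)$, so the bound on $|\pi_1(X)|$ is inherited from a bound on $\pi_1^{\mathrm{et}}$ of a smooth model of $Y$.

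Thus the whole theorem reduces to the claim that the smallness hypothesis on $K_X^2$ forces (a smooth model of) $Y$ to be rational, with at most an index-two étale cover. I would attack this via the Rudakov--Shafarevich type formula $K_X \equiv \pi^\ast K_Y + (p-1)\Delta$ and the associated expressions for $K_Y^2$, $\chi(\mathcal O_Y)$ and the local contributions of the isolated zeros of $D$. Bounds such as $\Delta^2 \leq K_X^2$ and $K_X\cdot\Delta \leq K_X^2$, combined with a Chern-class estimate controlling the length of the isolated zero scheme in terms of $c_2(X)$ and $p$, should translate the hypothesis into upper bounds on the invariants of a smooth model $\tilde Y$. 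The term $\tfrac{1}{2}\sqrt[3]{(p-3)/2}$ is presumably calibrated to force $K_{\tilde Y}^2<0$ (ruling out general type and most Kodaira dimension $0,1$ cases), while the term $(p-3)/3960$ handles the remaining classes in the Enriques--Kodaira classification of surfaces in positive characteristic one by one (quasi-elliptic and quasi-bielliptic surfaces, non-rational ruled surfaces, and the exceptional low-characteristic Enriques-type surfaces).

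The main obstacle I foresee is the fine numerical accounting at the isolated zeros of $D$. In positive characteristic these can be wild, so the singularities of $Y$ need not be as tame as du Val, and the discrepancies that appear after resolution are not uniformly bounded the way they are in characteristic zero. Controlling each local contribution to $K_{\tilde Y}^2$ and $\chi(\mathcal O_{\tilde Y})$ precisely enough to close the inequalities, and doing so uniformly over all foliation types, is where the explicit constant $3960$ will inevitably enter. Once rationality of $\tilde Y$ is in hand, the bound $|\pi_1(X)|\leq 2$ should follow from the observation that normal rational surfaces have small étale fundamental group, with the possible factor of two absorbing unbranched double covers arising from $2$-torsion in $\mathrm{Pic}(\tilde Y)$ or from the $\alpha_p$ vs.\ $\mu_p$ dichotomy for the original foliation.
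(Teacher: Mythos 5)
Your setup is the same as the paper's: reduce to a $p$-closed $D$ of additive or multiplicative type, form the quotient $\pi\colon X\to Y$, use the factorization of Frobenius through $\pi$ to transfer unirationality from $Y$ to $X$, use that $\pi$ is a universal homeomorphism to identify $\pi_1(X)$ with $\pi_1(Y)$, and exploit the Rudakov--Shafarevich adjunction $K_X=\pi^{\ast}K_Y+(p-1)\Delta$. But the engine you propose for the reduction to rationality is not the one that works, and as stated it has a genuine gap. You suggest that the bound $\tfrac12\sqrt[3]{(p-3)/2}$ is calibrated to force $K_{\tilde Y}^2<0$ on a smooth model and thereby rule out general type and most of Kodaira dimension $0,1$ numerically, via Chern-class estimates on the length of the isolated zero scheme. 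This cannot succeed in the central case $\Delta=0$: there $K_X=\pi^{\ast}K_Y$, so $K_Y$ is ample with $K_Y^2=K_X^2/p>0$, and no amount of numerical bookkeeping on $c_2$ or on the zero scheme will make the canonical class of a smooth model negative or directly pin down the Kodaira dimension of the minimal model $Z$ of the resolution. Indeed the paper must, and does, confront $\kappa(Z)=2,1,0$ head on even when $K_X^2$ is tiny.

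The mechanism the paper actually uses, and which is absent from your proposal, is the interaction of $D$ with curves: pluricanonical (and related) linear systems on $Z$ or $Y$ are pulled back to $X$ to produce integral curves $C$ of $D$ whose arithmetic genus is bounded by an explicit polynomial in $K_X^2$ (via the Hodge index theorem and adjunction); if that genus is less than $(p-1)/2$, then $D$ fixes all singular and infinitely near singular points of $C$ and hence lifts to the normalization $\bar C$, which is impossible unless $\bar C\cong\mathbb P^1$ because smooth curves of genus $\geq 2$ carry no nontrivial vector fields and vector fields on elliptic curves have no fixed points. For one-parameter families of integral curves one instead invokes Tate's theorem on genus change in inseparable extensions to force $\mathrm p_a>(p-1)/2$. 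This is where both constants come from: the cube root arises from the bound $\mathrm p_a(\hat W)\leq\tfrac12\bigl((K^2)^3+(K^2)^2\bigr)+1$ for a certain integral curve in the K3 case with $D$ additive, and $3960=2\cdot 44\cdot 45$ arises from multiplicity bounds ($\gamma_i\leq 22$, hence $44$) on configurations of $(-2)$-curves on the K3 model, not from a uniform discrepancy estimate at wild singularities. Without this curve-theoretic input your outline does not close; the step ``translate the hypothesis into upper bounds on the invariants of $\tilde Y$'' is precisely the missing proof.
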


Notice that the conditions on $b_1$ and the fundamental group are deformation invariant and hence are good conditions for the moduli problem. 

If the automorphism scheme $\mathrm{Aut}(X)$ of $X$ is not smooth then $\mathrm{Aut}(X)$ contains a subgroup scheme isomorphic to either $\alpha_p$ or $\mu_p$. This is equivalent to say that if $X$ has a nontrivial global vector field then $X$ has a nontrivial global vector field $D$ such that $D^p=0$ or $D^p=D$~\cite{Tz17b},~\cite{RS76}. If $\mu_p$ is a subgroup scheme of $\mathrm{Aut}(X)$, then finer restrictions can be imposed on $K_X^2$ which imply the unirationality of $X$.

\begin{theorem}\label{intro-the-2}
Let $X$ be a smooth canonically polarized surface defined over an algebraically closed field of characteristic $p>0$. Suppose that  $\mu_p\subset \mathrm{Aut}(X)$, or equivalently that $X$ has a nontrivial vector field of multiplicative type. Suppose that $K_X^2<(p-3)/156$. Then $X$ is unirational, $b_1(X)=0$ and $|\pi_1(X)|\leq 2$.
\end{theorem}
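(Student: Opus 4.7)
The plan is to follow the strategy used for Theorem~\ref{intro-the-1}, taking advantage of the finer structure available when the vector field is of multiplicative type. Let $D \in H^{0}(X, T_X)$ be nontrivial with $D^{p} = D$; such $D$ exists because $\mu_p \subset \mathrm{Aut}(X)$. The rank-one foliation generated by $D$ is $p$-closed, so the quotient $\pi \colon X \to Y = X/\langle D \rangle$ exists, is finite and purely inseparable of degree $p$, and the relative Frobenius of $X$ factors as
\[
F_X \colon X \xrightarrow{\ \pi\ } Y \xrightarrow{\ \sigma\ } X^{(p)},
\]
with $\sigma$ also purely inseparable of degree $p$. It therefore suffices to show that a minimal resolution $\widetilde{Y} \to Y$ is rational: this transfers unirationality through Frobenius to $X^{(p)} \cong X$.

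First I would analyze the local structure of $Y$. Because $D^{p} = D$, the derivation $D$ is diagonalizable; in \'etale local coordinates $D = \sum \lambda_i x_i \partial/\partial x_i$ with $\lambda_i \in \mathbb{F}_p$. Consequently the divisorial part $\Delta$ of the zero scheme of $D$ is reduced and smooth, and each isolated zero of $D$ produces a cyclic quotient singularity on $Y$ of type $\tfrac{1}{p}(1,q)$; these are rational, so in particular $\chi(\mathcal{O}_{\widetilde{Y}}) = \chi(\mathcal{O}_Y)$. The standard formulas for purely inseparable quotients of a smooth surface by a $p$-closed vector field (see~\cite{RS76}) give an explicit relation between $K_X$, $\pi^{\ast}K_Y$ and $\Delta$, from which $K_Y^{2}$ may be expressed as a quadratic polynomial in $K_X^{2}$, $K_X\!\cdot\!\Delta$ and $\Delta^{2}$, with analogous formulas on $\widetilde{Y}$ where the exceptional locus contributes in a controlled way because the singularities are rational.

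The quantitative heart of the argument is to use the hypothesis $K_X^{2} < (p-3)/156$ to force $\kappa(\widetilde{Y}) = -\infty$. The Hodge index theorem applied to the ample class $K_X$ and $\Delta$ bounds $\Delta^{2}$ below by $(K_X\!\cdot\!\Delta)^{2}/K_X^{2}$, and a Miyaoka-type inequality on $\widetilde{Y}$ in positive characteristic bounds $K_X\!\cdot\!\Delta$ above in terms of $K_X^{2}$, with explicit constants reflecting the reduced smooth structure of $\Delta$ peculiar to the multiplicative-type case. Combining these estimates with the Noether formula on $\widetilde{Y}$ and with the decomposition $\pi_{\ast}\mathcal{O}_X = \bigoplus_{i=0}^{p-1}\mathcal{L}_i$ by $\mu_p$-characters (which relates $\chi(\mathcal{O}_Y)$ to $\chi(\mathcal{O}_X)$) rules out $\widetilde{Y}$ being of general type, properly elliptic or quasi-elliptic, or of Kodaira dimension $0$. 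Hence $\widetilde{Y}$ is ruled; the same $\chi$ computation gives $q(\widetilde{Y}) = 0$, upgrading ruledness to rationality, and unirationality of $X$ follows.

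Unirationality immediately yields $b_1(X) = 0$ since the Albanese of a unirational variety is trivial. For the bound $|\pi_1(X)| \leq 2$, apply the preceding analysis to a finite connected \'etale cover $\widetilde{X} \to X$ of degree $n$: the vector field $D$ pulls back to a multiplicative-type vector field on $\widetilde{X}$, so $\mu_p \subset \mathrm{Aut}(\widetilde{X})$; the identities $K_{\widetilde{X}}^{2} = nK_X^{2}$ and $\chi(\mathcal{O}_{\widetilde{X}}) = n\chi(\mathcal{O}_X)$, combined with the Noether formula applied to the $\mu_p$-quotient of $\widetilde{X}$, force $n \leq 2$. The principal obstacle is the quantitative step: engineering the bounds on $K_X\!\cdot\!\Delta$ and $\Delta^{2}$ sharp enough that the explicit hypothesis $(p-3)/156$ excludes every nonnegative Kodaira dimension for $\widetilde{Y}$ uniformly in $p$, and this is where the reduced-smooth structure of $\Delta$ in the multiplicative-type case must be exploited to its fullest.
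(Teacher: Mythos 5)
There is a genuine gap, and it sits exactly at what you yourself call ``the quantitative heart of the argument.'' You propose to force $\kappa(\widetilde{Y})=-\infty$ by combining the Hodge index theorem, the Noether formula, the character decomposition of $\pi_{\ast}\mathcal{O}_X$, and ``a Miyaoka-type inequality on $\widetilde{Y}$ in positive characteristic.'' No such inequality is available: the Bogomolov--Miyaoka--Yau inequality and Miyaoka's results on foliated surfaces are well known to fail in characteristic $p$, so the upper bound on $K_X\cdot\Delta$ that your whole numerical scheme rests on has no source. The paper obtains the bound $K_X\cdot\Delta\leq 3K_X^2$ (Claim~\ref{sec4-claim-4}) by an entirely different mechanism: it takes a general member $C\in|3K_X|$, observes that $\pi|_C\colon C\to\pi(C)$ is birational, and compares $\mathrm{p}_a(C)$ with $\mathrm{p}_a(\pi(C))$ via the adjunction formula for the purely inseparable quotient. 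More fundamentally, the engine that actually consumes the hypothesis $K_X^2<(p-3)/156$ in the paper is absent from your proposal: one produces integral curves of $D$ (eigenvectors of the induced action on pluricanonical systems, Proposition~\ref{sec1-prop1}) whose arithmetic genus is bounded by an explicit multiple of $K_X^2$; the hypothesis guarantees $\mathrm{p}_a<(p-1)/2$, so by Proposition~\ref{sec1-prop4} the restricted vector field lifts to the normalization, and one derives a contradiction from the facts that curves of genus $\geq 2$ carry no vector fields and vector fields on elliptic curves have no fixed points (plus Tate's genus-change theorem for families). Your proposal never engages with this.

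A second, independent problem: your plan concludes that $\widetilde{Y}$ is \emph{rational}, but this is false in cases the theorem must accommodate. When the minimal model $Z$ of $\widetilde{Y}$ is a K3 or Enriques surface --- cases the paper does \emph{not} exclude but instead handles via Liedtke's theorem that uniruled K3 surfaces are unirational --- the surface $\widetilde{Y}$ is birational to $Z$ and hence not rational, only unirational; and the Enriques case is precisely what produces $\pi_1(X)=\mathbb{Z}/2\mathbb{Z}$, i.e.\ the ``$\leq 2$'' in the statement. An argument that genuinely ruled out all $\kappa(Z)\geq 0$ would prove $\pi_1(X)=\{1\}$, which is stronger than the theorem and not what happens. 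Your final step for $|\pi_1(X)|\leq 2$ via \'etale covers and multiplicativity of $K^2$ and $\chi$ is also not substantiated; the paper instead uses that $\pi$ induces an equivalence of \'etale sites, so $\pi_1(X)=\pi_1(Y)=\pi_1(Z)$, and reads off the fundamental group from the classification of $Z$ ($\{1\}$ for rational and K3, $\mathbb{Z}/2\mathbb{Z}$ for Enriques). Your opening reductions (existence of the $\mu_p$-quotient, factorization of Frobenius, rationality of the quotient singularities in the multiplicative case) are fine, but the proof as proposed cannot be completed along the lines you describe.
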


\begin{corollary}\label{intro-cor-1}
Let $X$ be a smooth canonically polarized surface defined over an algebraically closed field of characteristic $p>0$ such that 
\[
K_X^2 <\mathrm{min}\left\{\frac{1}{2}\sqrt[3]{\frac{p-3}{2}},\frac{p-3}{3960}\right\}
\]
and such that either $b_1(X)\not= 0$ or that $|\pi_1(X)|>2$. Then $\mathrm{Aut}(X)$ is smooth and therefore there do not exist nontrivial global vector fields on $X$.
\end{corollary}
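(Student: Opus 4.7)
The statement is the logical contrapositive of Theorem~\ref{intro-the-1}, so the plan is simply to invoke that theorem and argue by contradiction. First I would suppose for contradiction that $\mathrm{Aut}(X)$ is not smooth. Since the numerical hypothesis on $K_X^2$ is exactly the bound required in Theorem~\ref{intro-the-1}, this hypothesis together with the non-smoothness of $\mathrm{Aut}(X)$ would place $X$ under the conclusion of that theorem, giving $b_1(X)=0$ and $|\pi_1(X)|\le 2$.

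This directly contradicts the standing assumption that $b_1(X)\ne 0$ or $|\pi_1(X)|>2$. Therefore $\mathrm{Aut}(X)$ must be smooth. The second conclusion then follows from the equivalence recalled in the introduction: the tangent space to $\mathrm{Aut}(X)$ at the identity is canonically $\mathrm{Hom}(\Omega_X,\mathcal{O}_X)$, which is the space of global vector fields of $X$, and since $\mathrm{Aut}(X)$ is a zero-dimensional group scheme of finite type, smoothness is equivalent to the vanishing of this tangent space. Hence no nontrivial global vector field on $X$ can exist.

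There is no real obstacle here; the only subtlety is to be careful that the equivalence ``$\mathrm{Aut}(X)$ not smooth $\Leftrightarrow$ existence of a nontrivial global vector field'' requires $X$ to be canonically polarized so that $\mathrm{Aut}(X)$ has dimension zero, which is granted by hypothesis. Thus the corollary is an immediate formal consequence of Theorem~\ref{intro-the-1}.
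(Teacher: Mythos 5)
Your proposal is correct and matches the paper's (implicit) argument: the corollary is stated as the immediate contrapositive of Theorem~\ref{intro-the-1}, with the equivalence between non-smoothness of $\mathrm{Aut}(X)$ and the existence of a nontrivial global vector field exactly as recalled in the introduction. Nothing further is needed.
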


The previous results have immediate applications to the structure of the moduli stack of canonically polarized surfaces.

\begin{theorem}\label{intro-the-3}
Let $k$ be a field of characteristic $p>0$ and $\lambda <\mathrm{min}\left\{\frac{1}{2}\sqrt[3]{\frac{p-3}{2}},\frac{p-3}{3960}\right\}$ be a positive integer. Let $\mathcal{M}_{\lambda, 3}$ be the moduli stack of smooth canonically polarized surfaces with $K^2= \lambda$ and fundamental group of order at least 3. Then  $\mathcal{M}_{\lambda, 3}$ is Deligne-Mumford.
\end{theorem}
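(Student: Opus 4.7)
The plan is to reduce the statement to the standard criterion that an algebraic stack of finite type over a field is Deligne--Mumford precisely when the automorphism group scheme at every geometric point is reduced, together with Corollary \ref{intro-cor-1}.

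First, I would recall that the moduli stack $\mathcal{M}_\lambda$ of smooth canonically polarized surfaces with $K^2 = \lambda$ exists as an algebraic stack of finite type over $k$, this being part of Kollár's framework for moduli of stable surfaces cited in the introduction. I would then verify that $\mathcal{M}_{\lambda,3}$ is a well-defined open (in fact open and closed) algebraic substack of $\mathcal{M}_\lambda$. The key input is that in a smooth proper family over a connected noetherian base the algebraic fundamental groups of the geometric fibers are all canonically isomorphic, so the function $|\pi_1|$ is locally constant on $\mathcal{M}_\lambda$, and the locus where $|\pi_1| \geq 3$ is a union of connected components.

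Next, I would invoke \cite[Theorem 4.1]{DM69}: an algebraic stack of finite type over $k$ is Deligne--Mumford if and only if the automorphism group scheme of every geometric point is unramified. As emphasized in the introduction, for a canonically polarized surface $X$ the scheme $\mathrm{Aut}(X)$ is finite over $k$, so unramified is equivalent to reduced, which, for a finite-type group scheme over an algebraically closed field, is equivalent to smooth.

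Finally, I would apply Corollary \ref{intro-cor-1} to every geometric point $X$ of $\mathcal{M}_{\lambda,3}$: one has $K_X^2 = \lambda$ below the stated threshold and $|\pi_1(X)|>2$, hence $\mathrm{Aut}(X)$ is smooth. Combining this with the preceding paragraph yields that $\mathcal{M}_{\lambda,3}$ is Deligne--Mumford. The main technical point, and the only non-formal step, is verifying that the condition $|\pi_1|\geq 3$ cuts out an algebraic substack; once the local constancy of $\pi_1$ in smooth proper families is invoked, the remainder of the argument is purely a bookkeeping application of Corollary \ref{intro-cor-1} and the Deligne--Mumford criterion.
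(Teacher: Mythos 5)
Your overall route is exactly the one the paper intends: the paper offers no separate proof of Theorem~\ref{intro-the-3}, presenting it as an immediate consequence of Corollary~\ref{intro-cor-1} together with the criterion of \cite[Theorem 4.1]{DM69} that a finite-type algebraic stack is Deligne--Mumford precisely when the automorphism group schemes of its geometric points are unramified (equivalently, for the finite group schemes $\mathrm{Aut}(X)$ arising here, smooth). Your final two paragraphs are a faithful formalization of that argument.

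The one step where you go beyond the paper is also the one step that is wrong as stated: the claim that in a smooth proper family over a connected noetherian base the \'etale fundamental groups of all geometric fibers are canonically isomorphic, so that $|\pi_1|$ is locally constant. In characteristic $p$ this fails. Grothendieck's specialization theorem gives a surjection $\pi_1(X_{\bar\eta})\to\pi_1(X_{\bar s})$ which induces an isomorphism only on prime-to-$p$ quotients; the $p$-part of $\pi_1$ can genuinely jump (this is why the paper's own remark that the fundamental group is ``deformation invariant'' also deserves a caveat). Consequently your argument shows at most that the locus $|\pi_1|\geq 3$ is stable under specialization, not that it is a union of connected components. This does not sink the theorem: the Deligne--Mumford property is checked pointwise on geometric points, so if one simply \emph{defines} $\mathcal{M}_{\lambda,3}$ as the full subcategory of families all of whose geometric fibers have $K^2=\lambda$ and $|\pi_1|\geq 3$ (or imposes the condition on the prime-to-$p$ fundamental group, which genuinely is locally constant), the pointwise application of Corollary~\ref{intro-cor-1} goes through unchanged. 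But as written, the sentence asserting canonical isomorphism of the fibers' fundamental groups should be corrected or replaced by the weaker specialization statement.
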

Taking into consideration the breadth of the possible values of the fundamental group of canonically polarized surfaces (it can be finite or infinite)~\cite{BCP11}, one sees that the previous results applies to a very large class of canonically polarized surfaces.

There are three comments that I would like to make regarding the statement of Theorems~\ref{intro-the-1},~\ref{intro-the-2}.

The bounds on $K^2$ are not optimal. In particular, if $K_X^2=1$, then Theorem~\ref{intro-the-1} says that $X$ is unirational if $p>3963$. But, according to~\cite[Theorem 1.1]{Tz17}, if $K_X^2=1$ then $X$ is unirational for all $p$ except possibly for $p= 3,5,7$. However, I believe that the strength of Theorem~\ref{intro-the-1} lies in its generality and not the optimality of the bounds presented. The results apply to every canonically polarized surface and not to a specific class of them. In individual cases, like the cases when $K_X^2 \leq 2$ which have been treated in~\cite{Tz17} finer results might be obtained by exploiting known results about the geometry of the surfaces in question. I believe that a more refined version of the method used to prove the theorems should provide a better bound for $K^2$. 

A desired result would be to obtain a bound for $K^2$ in the form $K^2<f(p)$, where $f(p)$ is a function of $p$, which implies the smoothness of 
$\mathrm{Aut}(X)$. Such a result will make it possible to obtain a theorem like Theorem~\ref{intro-the-3} for canonically polarized surfaces whose fundamental group has order at most 2 as well. However, the bounds for $p$ are most likely going to be larger than those in Theorems~\ref{intro-the-1},~\ref{intro-the-2} making such a result weaker, since it would cover less cases, compared to Theorems~\ref{intro-the-1},~\ref{intro-the-2} for surfaces whose fundamental group has order at least 3. I believe that a method based on the methods used in this paper should provide such a bound. However, at the moment I am unable to do so.

The reason that in Theorem~\ref{intro-the-2} I was able to obtain a much better bound for $K_X^2$ in the case when $X$ has a vector field of multiplicative type, or equivalently when $\mu_p$ is a subgroup scheme of $\mathrm{Aut}(X)$, is that $\mu_p$ is a diagonalizable group scheme while $\alpha_p$ is not.

This paper is organized as follows.

Section~\ref{sec-1} contains some simple  results that are necessary for the proof of the main theorems.

In Section~\ref{sec-2} the general method and strategy for the proof of Theorems~\ref{intro-the-1},~\ref{intro-the-2} are explicitly described. 

In Section~\ref{sec-3} the case of canonically polarized surfaces which have a nontrivial global vector field $D$ which has only isolated singularities is studied.

In Section~\ref{sec-4} the case of canonically polarized surfaces which have a nontrivial global vector field $D$ whose singularities have a divisorial part is studied.

Finally, the statements  of Theorems~\ref{intro-the-1},~\ref{intro-the-2}  is the combination of the statements of Theorems~\ref{sec3-th-1},~\ref{sec4-theorem-1}.

\section{Notation-Terminology}
Let $X$ be a scheme of finite type over a field $k$ of characteristic $p>0$.

Let $\mathcal{F}$ be a coherent sheaf on $X$. By $\mathcal{F}^{[n]}$ we denote the double dual $(\mathcal{F}^{\otimes n})^{\ast\ast}$. 

$X$ is called a smooth canonically polarized variety if and only if $X$ is a smooth  and $\omega_X$ is ample.

$Der_k(X)$ denotes the space of global $k$-derivations of $X$ (or equivalently of global vector fields). It is canonically identified with $\mathrm{Hom}_X(\Omega_X,\mathcal{O}_X)$.

Let $D$ be a nontrivial global vector field on $X$. $D$ is called $p$-closed if and only if $D^p=\lambda D$, for some $\lambda\in k$. $D$ is called of additive type if $D^p=0$ and of multiplicative type if $D^p=D$.
The fixed locus of $D$ is the closed subscheme of $X$ defined by the ideal sheaf $(D(\mathcal{O}_X))$. 
The divisorial part of the fixed locus of $D$ is called the divisorial part of $D$.  A point $P\in X$ is called an isolated singularity of $D$ if and only if the ideal of $\mathcal{O}_{X,P}$ generated by $D(\mathcal{O}_{X,P})$  has an associated prime of height $\geq 2$.

A prime divisor $Z$ of $X$ is called an integral divisor of $D$ if and only if locally there is a derivation $D^{\prime}$ of $X$ such that $D=fD^{\prime}$, $f \in K(X)$,  $D^{\prime}(I_Z)\subset I_Z$ and $D^{\prime}(\mathcal{O}_X) \not\subset I_Z$ ~\cite{RS76}. 

Let $X$ be a normal surface and $D$ a nontrivial global vector field on $X$ of either additive of multiplicative type. Then $D$ induces an $\alpha_p$ or $\mu_p$ action on $X$. Let $\pi \colon X\rightarrow Y$ be the quotient of $X$ by this action~\cite[Theorem 1, Page 104]{Mu70}. Let $C\subset X$ be a reduced and irreducible curve and $\tilde{C}=\pi(C)$. Suppose that $C$ is an integral curve of $D$. Then $\pi^{\ast}\tilde{C}=C$. Suppose that $C$ is not an integral curve of $D$. Then $\pi^{\ast}\tilde{C}=pC$~\cite{RS76}.

For any prime number $l\not= p$, the cohomology groups $H_{et}^i(X,\mathbb{Q}_l)$ are independent of $l$, they are finite dimensional of $\mathbb{Q}_l$ and are called the $l$-adic cohomology groups of $X$. The $i$-Betti number $b_i(X)$ of $X$ is defined to be the dimension of $H_{et}^i(X,\mathbb{Q}_l)$. It is well known that $b_i(X)=0$ for any $i>2n$, where $n=\dim X$~\cite[Chapter VI, Theorem 1.1]{Mi80}. 

$X$ is called  simply connected if $\pi_1(X)=\{1\}$, where $\pi_1(X)$ is the \'etale fundamental group of $X$.

Let $P\in X$ be a normal surface singularity and $f \colon Y \rightarrow X$ its minimal resolution. $P\in X$ is called DuVal (or canonical) if and only if $K_Y=f^{\ast}K_X$. By~\cite[Theorem 4.22]{KM98} canonical surface singularities are classified according to the Dynkin diagrams of their minimal resolution and they are called correspondingly of type $A_n$, $D_n$, $E_6$, $E_7$ and $E_8$. In characteristic zero these singularities are classified by explicit equations. However in positive characteristic I am not aware of a classification with respect to equations.

\section{Preparatory Results.}\label{sec-1}
Let $X$ be a smooth surface defined over an algebraically closed field $k$ of characteristic $p>0$. Let $D$ be a nontrivial vector field on $X$ (or equivalently a $k$-derivation of $\mathcal{O}_X$). 



The next proposition presents a method  to find integral curves of $D$.

\begin{proposition}[Proposition 6.10~\cite{Tz17}]\label{sec1-prop1}
Suppose that either $D^p=0$ or $D^p=D$ and let $\mathrm{Fix}(D)\subset X$ be the fixed subscheme of $X$ for the $\alpha_p$ or $\mu_p$ action on $X$ induced by $D$. Let $\pi \colon X \rightarrow Y$ be the quotient of $X$ by this action. Let $L$ be a rank one reflexive sheaf on $Y$ and $M=(\pi^{\ast}L)^{[1]}$. Then $D$ induces a $k$-linear map
\[
D^{\ast} \colon H^0(X,M) \rightarrow H^0(X,M)
\]
with the following properties:
\begin{enumerate}
\item $\mathrm{Ker}(D^{\ast})=H^0(Y,L)$ (considering $H^0(Y,L)$ as a subspace of $H^0(X,M)$ via the map $\pi^{\ast}$).
\item If $D^p=0$ then $D^{\ast}$ is nilpotent and if $D^p=D$ then $D^{\ast}$ is a diagonalizable map whose eigenvalues are in the set $\{0,1,\ldots,p-1\}$.
\item Let $s\in H^0(X,M)$ be an eigenvector of $D^{\ast}$. Then $D(I_{Z(s)})\subset I_{Z(s)}$, where  $Z(s)$ is the divisor of zeros of $s$. In particular, if $D^{\ast}(s)=\lambda s$, and $\lambda\not= 0$, then $(D(I_{Z(s)}))|_V=I_{Z(s)}|_V$, where $V=X-\pi^{-1}(W)$, $W\subset Y$ is the set of points that $L$ is not free.
 \end{enumerate}
\end{proposition}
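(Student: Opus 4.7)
The plan is to define $D^\ast$ first on the open locus $V=X\setminus \pi^{-1}(W)$, where $M$ agrees with $\pi^\ast L$ and $L$ is invertible, and then to extend to all of $X$ using reflexivity of $M$. On a trivialising affine open $U\subset Y\setminus W$, pick a generator $t$ of $L(U)$, so that any section of $M$ over $\pi^{-1}(U)$ is uniquely of the form $f\,\pi^\ast t$ with $f\in \mathcal{O}_X(\pi^{-1}(U))$, and set
\[
D^\ast(f\,\pi^\ast t):=D(f)\,\pi^\ast t.
\]
The essential check is independence of the trivialisation: if $t'=ut$ with $u\in\mathcal{O}_Y(U)^\ast$, then $\pi^\ast u$ lies in $\mathcal{O}_X^D$, because $\pi\colon X\rightarrow Y$ realises $Y$ as the quotient by the $\alpha_p$- or $\mu_p$-action determined by $D$, so that $\mathcal{O}_Y=\ker(D\colon\mathcal{O}_X\rightarrow\mathcal{O}_X)$. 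Consequently $D(f\pi^\ast u^{-1})=D(f)\pi^\ast u^{-1}$ and the two definitions agree on overlaps, producing a $k$-linear endomorphism of $\Gamma(V,M|_V)$. Since $L$ is reflexive on the normal surface $Y$ the locus $W$ has codimension $\geq 2$, so $X\setminus V$ has codimension $\geq 2$ in $X$; reflexivity of $M$ then gives $H^0(X,M)=H^0(V,M|_V)$, and $D^\ast$ promotes to the desired endomorphism of $H^0(X,M)$.

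For part (1), a section $s$ lies in $\ker D^\ast$ iff in any local trivialisation $D(f)=0$, iff $f\in \pi^\ast\mathcal{O}_{Y\setminus W}$, iff $s|_V$ is the pullback of a section of $L$ on $Y\setminus W$; by reflexivity of $L$ such a section extends uniquely to $Y$, yielding $\ker D^\ast=H^0(Y,L)$ under the identification by $\pi^\ast$. For part (2), the defining formula gives $(D^\ast)^n(f\,\pi^\ast t)=D^n(f)\,\pi^\ast t$ for every $n$. If $D^p=0$ then $(D^\ast)^p=0$, so $D^\ast$ is nilpotent. If $D^p=D$ then $(D^\ast)^p=D^\ast$, so $D^\ast$ is annihilated by $T^p-T=\prod_{i=0}^{p-1}(T-i)$, a product of distinct linear factors over $\mathbb{F}_p\subset k$, and hence $D^\ast$ is diagonalisable with eigenvalues in $\{0,1,\ldots,p-1\}$.

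For part (3), assume $D^\ast(s)=\lambda s$. On any trivialising open in $V$ write $s=f\pi^\ast t$; then $D(f)=\lambda f$, so $D$ sends the principal ideal $(f)$ into itself and the containment $D(I_{Z(s)})\subset I_{Z(s)}$ holds on $V$. Because $X\setminus V$ has codimension $\geq 2$ while $Z(s)$ is a divisor, $V\cap Z(s)$ meets every component of $Z(s)$, so the vanishing of $D(g)$ on $Z(s)$ for $g\in I_{Z(s)}$ propagates from $V$ to all of $X$, giving the global containment. If moreover $\lambda\neq 0$, then locally on $V$ we have $f=\lambda^{-1}D(f)$, so a generator of $I_{Z(s)}|_V$ already lies in $D(I_{Z(s)})|_V$, forcing the equality $(D(I_{Z(s)}))|_V=I_{Z(s)}|_V$. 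The main technical point throughout is the identification $\mathcal{O}_Y=\mathcal{O}_X^D$ coming from the quotient construction; this is what makes pulled-back functions $D$-invariant and allows $D$ to act on $M$ purely through its coefficient in a chosen trivialisation.
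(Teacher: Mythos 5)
Your argument is correct. Note that the paper itself offers no proof of this statement — it is quoted as Proposition 6.10 of \cite{Tz17} — but your construction (letting $D$ act on the coefficient $f$ of a local trivialization $s=f\,\pi^{\ast}t$, using $\mathcal{O}_Y=\mathcal{O}_X^{D}$ from the quotient construction to get independence of the trivialization, and then extending across the codimension-two locus $\pi^{-1}(W)$ by reflexivity of $M$) is the standard argument for this result, and all three parts, including the propagation of the divisorial condition $D(I_{Z(s)})\subset I_{Z(s)}$ from $V$ to $X$ via normality, check out.
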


The previous proposition shows that every eigenvector of $D^{\ast}$ corresponds to a curve $C\subset X$ such that $D(I_C)\subset I_C$ and therefore $D$ induces a vector field on $C$. However it is possible that $D(\mathcal{O}_X)\subset I_C$ and hence the induced vector field on $C$ is trivial. This implies that $C$ is contained in the divisorial part of $D$. This cannot happen of course if $D$ has only isolated singularities. 

Let $C=n_1C_1+\cdots +n_kC_k$ be a curve in $X$ and its decomposition into its prime components. Suppose that $D(I_C)\subset I_C$. In general $D$ does not induce vector fields on $C_i$, i.e, $D(I_{C_i})$ may not be contained in $C_i$. For example for any reduced and irreducible curve  $C$, $D$ fixes $pC$ but not necessarily $C$. The next proposition provides some conditions in order for $D$ to restrict to $D_i$.

\begin{proposition}\label{sec1-prop2}
Let $C \subset X$ be a curve such that $D(I_C)\subset I_C$, where $I_C \subset \mathcal{O}_X$ is the ideal sheaf of $C$ in $X$ and such that $D(\mathcal{O}_X) \not\subset I_C$, i.e, $C$ is not contained in the fixed locus of $D$. Let $C=n_1C_1+\cdots +n_kC_k$ be the decomposition of $C$ in its irreducible and reduced components. If $p$ does not divide $n_i$, for all $1\leq i \leq k$, then $D(I_{C_i})\subset I_{C_i}$, for all $1\leq i \leq k$. Therefore $D$ fixes the reduced part of every irreducible component of $C$ and hence induces a vector field on $C_i$, for all $1\leq i \leq k$.
\end{proposition}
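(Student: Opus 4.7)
The statement is local on $X$, so the plan is to fix a closed point $P$ and carry out the verification inside the regular two-dimensional local ring $\mathcal{O}_{X,P}$. Using that $X$ is smooth—so that Weil divisors are Cartier—each component $C_j$ passing through $P$ admits a local equation $f_j\in\mathcal{O}_{X,P}$, and the ideal $I_{C,P}$ is generated by the single element $g=\prod_{j:\,P\in C_j}f_j^{n_j}$; components not meeting $P$ contribute a unit which I absorb. The target is to show $D(f_{i_0})\in (f_{i_0})$ for each index $i_0$ with $P\in C_{i_0}$, the opposite case being vacuous.

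Applying the Leibniz rule and factoring out the non-zero divisor $\prod_j f_j^{n_j-1}$ yields
\[
D(g)=\Big(\prod_j f_j^{n_j-1}\Big)\sum_i n_i\Big(\prod_{j\neq i}f_j\Big)D(f_i),
\]
so the hypothesis $D(g)\in (g)$, combined with $g=(\prod_j f_j^{n_j-1})(\prod_j f_j)$ and cancellation of the non-zero divisor $\prod_j f_j^{n_j-1}$, rewrites as
\[
\sum_i n_i\Big(\prod_{j\neq i}f_j\Big)D(f_i)\equiv 0\pmod{\prod_j f_j}.
\]
Reducing this congruence modulo $f_{i_0}$ kills every term with $i\neq i_0$, since the coefficient $\prod_{j\neq i}f_j$ then contains the factor $f_{i_0}$, leaving
\[
n_{i_0}\Big(\prod_{j\neq i_0}f_j\Big)D(f_{i_0})\equiv 0\pmod{f_{i_0}}.
\]

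The hypothesis $p\nmid n_{i_0}$ is used exactly here to guarantee that $n_{i_0}$ is a unit in $\mathcal{O}_{X,P}$. The step that requires care—and is really the only delicate point—is the claim that $\prod_{j\neq i_0}f_j$ is a non-zero divisor modulo $f_{i_0}$. This holds because $C_{i_0}$ is reduced and irreducible, so $\mathcal{O}_{X,P}/(f_{i_0})=\mathcal{O}_{C_{i_0},P}$ is a domain, and each $f_j$ with $j\neq i_0$ has nonzero image in it since the prime divisors $C_j$ are pairwise distinct. Cancelling these two factors gives $D(f_{i_0})\in (f_{i_0})=I_{C_{i_0},P}$, and since $P$ was arbitrary this establishes $D(I_{C_{i_0}})\subset I_{C_{i_0}}$ as sheaves. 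That $D$ then descends to a $k$-derivation of $\mathcal{O}_{C_{i_0}}=\mathcal{O}_X/I_{C_{i_0}}$ is immediate from the definition of a derivation.
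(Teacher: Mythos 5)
Your proof is correct. The core mechanism is the same as the paper's (Leibniz rule, invertibility of $n_i$ from $p\nmid n_i$, and primality of the ideal of an irreducible component), but you organize the argument differently. The paper first works at points lying on $C_i$ alone, where $I_C$ is locally just $(t^{n_i})$ and the computation $D(t^{n_i})=n_it^{n_i-1}Dt\in(t^{n_i})$ immediately gives $Dt\in(t)$; it then extends the containment $D(I_{C_i})\subset I_{C_i}$ across the finitely many intersection points $C_i\cap C_j$ by a separate localization argument, using that $I_{C_i}$ is prime so that $a^mDx\in I_{C_i}$ with $a\notin I_{C_i}$ forces $Dx\in I_{C_i}$. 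You instead work at an arbitrary point from the start, expand $D\bigl(\prod_jf_j^{n_j}\bigr)$ fully, cancel the nonzerodivisor $\prod_jf_j^{n_j-1}$ in the regular (hence integral) local ring, and read off $D(f_{i_0})\in(f_{i_0})$ by reducing the resulting congruence modulo the prime $(f_{i_0})$ and using that $n_{i_0}\prod_{j\neq i_0}f_j$ is nonzero in the domain $\mathcal{O}_{X,P}/(f_{i_0})$. Your single-pass version buys a cleaner treatment of the intersection points (no second extension step needed), at the small cost of a slightly heavier Leibniz computation; both uses of the hypotheses are identical, and the hypothesis $D(\mathcal{O}_X)\not\subset I_C$ is not actually needed in either argument for the stated inclusion.
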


\begin{proof}
Let $Q\in C_i$ be a closed point such that $Q\not\in C_j$, for any $j\not=i$, $1\leq i,j, \leq k$. Then locally around $Q$, $X=\mathrm{Spec}A$ and $D$ is a $k$-derivation of $A$. Since $X$ is smooth, $I_C=I_{C_i}=(t^{n_i})$, where $t \in A$ is a prime element. Then since $D(_C)\subset I_C$, it follows that $n_it^{n_i-1}Dt \in (t^{n_i})$ and hence there exists $a\in A$ such that $n_it^{n_i-1}Dt =at^{n_i}$. Now since $p$ does not divide $n_i$, $n_i \not=0$ in $k$ and hence it follows that $Dt \in (t)$. Hence $D(I_{C_i})|_U=I_{C_i}|_U$, where $U=X-Z$, $Z=C_i \cap (\cup_{j\not= i}C_j)$.

Let now $V=\mathrm{Spec}A \subset X$ be an affine open subset such that $I_{C_i}|_U \not= \mathcal{O}_U$. Let $a \in \cap_{j\not= i} I_{C_j}$ such that 
$a\not\in I_{C_i}$. Then $ I_C A_a=I_{C_i}A_a$ and therefore $D(I_{C_i}A_a)\subset I_{C_i}A_a$. Let $x\in I_{C_i}$. Then $D(x/1)=y/a^m$, for some 
$y\in I_{C_i}$ and $m\in \mathbb{N}$. Therefore $a^mDx \in I_{C_i}$ and since $a\not \in I_{C_i}$ it follows that $Dx \in I_{C_i}$. Hence
 $D(I_{C_i})\subset I_{C_i}$, for any $1\leq i \leq k$.
\end{proof}

\begin{corollary}\label{sec1-cor1}
With assumptions as in Proposition~\ref{sec1-prop2}. Suppose  in addition that $K_X$ is ample and that $K_X \cdot C < p$. Then $D(I_{C_i})\subset I_{C_i}$, for all $1\leq i \leq k$. Therefore $D$ fixes the reduced part of every irreducible component of $C$ and hence induces a vector field on $C_i$, for all $1\leq i \leq k$.
\end{corollary}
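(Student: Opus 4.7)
The plan is to reduce Corollary~\ref{sec1-cor1} to Proposition~\ref{sec1-prop2} by showing that the hypothesis $K_X\cdot C<p$ forces $p\nmid n_i$ for every $i$. So I would argue by contradiction: suppose there exists an index $i_0$ with $p\mid n_{i_0}$, equivalently $n_{i_0}\ge p$.

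Next, I would use ampleness of $K_X$ to bound each intersection number from below. Since $K_X$ is ample and each $C_j$ is an integral curve, the Nakai--Moishezon criterion (or simply ampleness) gives $K_X\cdot C_j\ge 1$ for every $j$, and in particular $K_X\cdot C_j$ is a positive integer. Decomposing
\[
K_X\cdot C=\sum_{j=1}^{k} n_j\,(K_X\cdot C_j)\ge n_{i_0}(K_X\cdot C_{i_0})\ge p\cdot 1=p,
\]
which contradicts the standing hypothesis $K_X\cdot C<p$. Hence no $n_i$ is divisible by $p$.

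With that established, the remaining hypotheses of Proposition~\ref{sec1-prop2} are exactly those assumed in the corollary ($D(I_C)\subset I_C$ and $D(\mathcal O_X)\not\subset I_C$), and applying that proposition yields $D(I_{C_i})\subset I_{C_i}$ for every $1\le i\le k$, which is the desired conclusion.

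There is no real obstacle here: the only step worth checking carefully is the lower bound $K_X\cdot C_j\ge 1$, but since the $C_j$ are reduced irreducible curves on a smooth surface and $K_X$ is ample, this is immediate from the fact that $K_X\cdot C_j$ is a positive integer. The argument is essentially a one-line numerical consequence of ampleness, and the corollary should follow in a few lines.
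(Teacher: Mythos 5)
Your argument is correct and is essentially the paper's own proof: the paper likewise observes that ampleness of $K_X$ gives $K_X\cdot C_j\ge 1$ for each component, so $K_X\cdot C<p$ forces $n_i<p$ (hence $p\nmid n_i$), and then invokes Proposition~\ref{sec1-prop2}. Your phrasing via contradiction with $p\mid n_{i_0}\Rightarrow n_{i_0}\ge p$ is just a minor repackaging of the same one-line numerical estimate.
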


\begin{proof}
Since $K_X$ is assumed to be ample, the condition $K_X\cdot C <p$ immediately implies that $n_i <p$, for all $1\leq i \leq k$. Then the corollary follows directly from Proposition~\ref{sec1-prop2}.
\end{proof}

\begin{proposition}\label{sec1-prop3}
Suppose that $K_X$ is ample. Let $C\in |mK_X|$ be a curve such that $D(I_C) \subset I_C$. Let $C=n_1C_1+\cdots +n_kC_k$ its decomposition into its prime divisors. Suppose that $K_X^2< p/(m^2+3m)$. Then every point of intersection of $C_i$ and $C_j$, $i\not= j$, is a fixed point of $D$. 

\end{proposition}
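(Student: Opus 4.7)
The plan is to argue by contradiction. First note that $K_X\cdot C = m K_X^2 < mp/(m^2+3m) < p$, so Corollary~\ref{sec1-cor1} applies to each component and $D(I_{C_l})\subset I_{C_l}$ for every $l$. Fix $i\neq j$ and suppose for contradiction that $P\in C_i\cap C_j$ is not a fixed point of $D$. I will work in $\hat R := \widehat{\mathcal{O}}_{X,P}$ and show that $(C_i\cdot C_j)_P \geq p$; combined with a global bound $C_i\cdot C_j \leq (m^2+3m)K_X^2 < p$, this yields the desired contradiction.

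For the lower bound on the local intersection multiplicity I first straighten $D$ on the completion. Since $D$ does not vanish at $P$, solving the formal flow equations $\partial_t\phi = D(\phi)$ order-by-order in $t$ in $k[[t,x,y]]$ (which succeeds in any characteristic, as each coefficient in $t$ is determined uniquely) produces a first integral $v\in\hat{\mathfrak m}\setminus\hat{\mathfrak m}^2$ with $D(v)=0$. Completing $v$ to a regular system of parameters $(u,v)$ of $\hat R$, one has $D = g\,\partial_u$ where $g = D(u)$ is a unit (because $D(P)\neq 0$ forces $D(u)(P)\neq 0$). Since $g$ is a unit, $D$ and $g^{-1}D = \partial_u$ preserve exactly the same ideals of $\hat R$, so without loss of generality $D=\partial_u$; then $R_0 := \hat R^{\partial_u} = k[[u^p,v]]$ is a 2-dimensional regular local ring and $\hat R$ is free of rank $p$ over $R_0$ with basis $1,u,\ldots,u^{p-1}$. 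The key local claim is that the $D$-stable ideal $I := (f_i,f_j)$ is extended from its trace, i.e.\ $I = I_0\hat R$ with $I_0 := I\cap R_0$. To prove it, expand $f_i = \sum_{\nu=0}^{p-1} c_\nu u^\nu$ with $c_\nu\in R_0$; since $\partial_u^k f_i\in I$ for every $k\geq 0$ and the matrix of $\partial_u$ in the basis $(u^\nu)$ is strictly upper-triangular with super-diagonal $(1,2,\ldots,p-1)$, all nonzero mod $p$, one can invert the resulting triangular system to express each $c_\nu$ as an $R_0$-combination of $f_i,\partial_u f_i,\ldots,\partial_u^{p-1}f_i$. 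Hence $c_\nu\in I\cap R_0 = I_0$, so $f_i\in I_0\hat R$; the same argument for $f_j$ gives $I = I_0\hat R$. By flat base change along $R_0\hookrightarrow\hat R$,
\[
(C_i\cdot C_j)_P = \dim_k \hat R/I = p\cdot\dim_k R_0/I_0 \geq p.
\]

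For the upper bound, since $C$ is numerically equivalent to $mK_X$,
\[
n_j(C_i\cdot C_j) \leq \sum_{l\neq i} n_l(C_i\cdot C_l) = mK_X\cdot C_i - n_iC_i^2.
\]
From $\sum_l n_l(K_X\cdot C_l) = mK_X^2$ combined with $K_X\cdot C_l\geq 1$ (ampleness), one has $K_X\cdot C_i\leq mK_X^2$ and $n_i\leq mK_X^2$. The arithmetic genus inequality $p_a(C_i)\geq 0$ gives $C_i^2\geq -2-K_X\cdot C_i$, whence
\[
C_i\cdot C_j \leq m^2K_X^2 + n_i(2+K_X\cdot C_i) \leq m^2K_X^2 + 2mK_X^2 + mK_X^2 = (m^2+3m)K_X^2 < p,
\]
contradicting $(C_i\cdot C_j)_P\geq p$.

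I expect the main obstacle to be the local claim $I=I_0\hat R$. It rests on two positive-characteristic ingredients: the existence of a formal first integral of a nonvanishing derivation (supplied by the recursive solvability of the flow ODE in $k[[t,x,y]]$), and the invertibility mod $p$ of the super-diagonal entries $1,\ldots,p-1$ of $\partial_u$, which is what makes the upper-triangular extraction of the $c_\nu$ possible and simultaneously explains why the bound must involve $p$.
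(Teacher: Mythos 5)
Your global estimate $C_i\cdot C_j\leq (m^2+3m)K_X^2<p$ is correct and is essentially the same computation as in the paper. The difficulty is in your local step. The paper deduces that $P$ is fixed directly and elementarily: with $Q=I_{C_i}+I_{C_j}$ one has $D(Q)\subset Q$, $Q$ is $m_P$-primary of colength $\dim_k A/Q=(C_i\cdot C_j)_P<p$, so for $a\in m_P$ there is a minimal $m_0<p$ with $a^{m_0}\in Q$; then $m_0a^{m_0-1}Da\in Q$ with $m_0\neq 0$ in $k$ and $a^{m_0-1}\notin Q$ forces $Da\in r(Q)=m_P$. This uses nothing about $D$ beyond $D(I_{C_i})\subset I_{C_i}$ and $D(I_{C_j})\subset I_{C_j}$.

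Your route instead hinges on straightening $D$ at a non-fixed point, and the justification you give for it fails in characteristic $p$. Writing $\phi=\sum_n a_nt^n$, the flow equation gives $(n+1)a_{n+1}=D(a_n)$, which at $n+1=p$ reads $0=D(a_{p-1})$: a constraint, not a determination, so the formal flow does not exist in general and the coefficients are not ``determined uniquely''. Worse, the conclusion itself is false for a general derivation: for $D=\partial_x+x^{p-1}y\,\partial_y$ on $k[[x,y]]$, which does not vanish at the origin, there is no $v\in\hat{\mathfrak m}\setminus\hat{\mathfrak m}^2$ with $D(v)=0$ --- any such $v$ must satisfy $v\equiv y\bmod \hat{\mathfrak m}^2$ up to scalar, and comparing coefficients of $x^{p-1}$ in $v_x+x^{p-1}yv_y=0$ forces $yv_0'(y)=0$ for the $x$-degree-zero part $v_0(y)$, which is incompatible with $v_0=y+O(y^2)$. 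Note that the proposition, and the paper's proof, assume nothing like $p$-closedness of $D$. If you add the hypothesis $D^p=0$ or $D^p=D$ (which does hold in all of the paper's applications), then the normal form $D=h\,\partial/\partial x$ with $h$ a unit and ring of constants $k[[x^p,y]]$ at a non-fixed point is Theorem 1 of Rudakov--Shafarevich, already cited in the paper, and the remainder of your local argument --- extracting the coefficients $c_\nu$ of $f_i$ over $R_0=k[[u^p,v]]$ from $f_i,\partial_uf_i,\dots,\partial_u^{p-1}f_i$ via the invertible factors $1,\dots,p-1$, concluding $I=I_0\hat R$ and $(C_i\cdot C_j)_P=p\cdot\dim_kR_0/I_0\geq p$ by freeness --- is correct and gives a genuinely different, and rather pretty, contradiction with the intersection bound. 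As written, however, the straightening step is a real gap, both in its proof and in its unrestricted generality.
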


\begin{proof}
The result is local at the points of intersection of $C_i$ and $C_j$. So let $P\in C_i \cap C_j$ be a point of intersection of $C_i$ and $C_j$. Let $U=\mathrm{Spec} A$ be an affine open subset of $X$ containing $P$ but no other point of $C_i \cap C_j$. Let $I$ and $J$ be the ideals of $C_i$ and $C_j$ respectively. Then $I+J=Q$, with $r(Q)=m_P$, the maximal ideal corresponding to the point of intersection $P$ of $C_i$ and $C_J$. By assumption, $D(I) \subset I$ and $D(J) \subset J$. Therefore $D(I+J)=D(I)+D(J)\subset I+J$. Hence $D(Q)\subset Q$. Ii will show that this implies that $D(m_P)\subset m_P$ and therefore $P$ is a fixed point of $D$.

In order to show that $D(m_P)\subset m_P$ I will first show that $C_i\cdot C_j <p$. Then if $I=(f)$ and $J=(g)$, $f,g \in A$, $\dim_k A/(f,g)< p$. Hence for any $a\in m_Q$, there exists $m<p$ such that $a^m \in Q=I+J$. Let $m_0<p$ be the smallest such $m$. Then $D(a^{m_0})=m_0a^{m_0-1}Da \in Q=I+J$. $Q$ is a primary ideal and $a^{m_0-1} \not\in Q$. Hence $(Da)^s \in Q \subset m_P$, for some $s \geq 0$. Hence $Da \in m_P$. Therefore $D(m_P)\subset m_P$, as claimed. 

It remains to show that $C_i \cdot C_j <p$. By definition, $mK_X \sim \sum_{s=1}^k n_s C_s$.  Let $1\leq i,j \leq k$. Then
\begin{gather}\label{sec1-prop3-eq1}
mK_X\cdot C_i=n_j C_i \cdot C_j +n_i C_i^2 +\sum_{s\not= i,j} n_s C_s \cdot C_i \geq n_j C_i \cdot C_j +n_iC_i^2.
\end{gather}
On the other hand, $mK_X^2=\sum_{s=1}^m n_s K_X\cdot C_s$ and since $K_X$ is ample, it follows that $K_X\cdot C_s>0$ for every $1\leq s \leq m$ and therefore $K_X\cdot C_s \leq n_s K_X \cdot C_s \leq mK_X^2$. Then from~\ref{sec1-prop3-eq1} it follows that
\begin{gather}\label{sec1-prop3-eq2}
C_i\cdot C_j \leq m^2K_X^2 -n_iC_i^2.
\end{gather}
Suppose that $C_i^2\geq 0$. Then from the above equation it follows that
\[
C_i \cdot C_j < m^2K_X^2<\frac{pm^2}{m^2+3m}<p.
\]
Suppose that $C_i^2 <0$. Then from the adjunction formula it follows that $C_i^2=2p_a(C_i)-2-K_X\cdot C_i\geq -2-K_X\cdot C_i$. Then from the equation~\ref{sec1-prop3-eq2} it follows that 
\begin{gather}\label{sec1-prop3-eq3}
C_i\cdot C_j \leq m^2K_X^2+2n_i+n_iK_X\cdot C_i.
\end{gather}
But it has been shown earlier that $n_iK_X\cdot C_i \leq mK_X^2$ and hence $n_i \leq mK_X^2$ and $K_X\cdot C_i <mK_X^2$. Hence 
\[
C_i \cdot C_j \leq (m^2+3m)K_X^2<\frac{p(m^2+3m)}{m^2+3m}<p,
\]
as claimed. This concludes the proof.

\end{proof}
The proof of the previous proposition shows also the following.
\begin{corollary}\label{sec1-cor0}
Let $C_1$, $C_2$ be two different irreducible and reduced curves on $X$ such that $D(I_{C_i})\subset I_{C_i}$, for $i=1,2$. Assume that $C_1 \cdot  C_2 <p$. Then every point of intersection of $C_1$ and $C_2$ is a fixed point of $D$.
\end{corollary}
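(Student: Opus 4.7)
The plan is to localize at an intersection point and re-use essentially verbatim the local portion of the argument in Proposition~\ref{sec1-prop3}. The point is that in that proposition, the ampleness of $K_X$ and the numerical hypothesis on $K_X^2$ were used \emph{only} to derive the bound $C_i\cdot C_j<p$; the remainder of the argument is a purely local statement at a single intersection point, and here that inequality is given as a hypothesis.

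First I would pick $P\in C_1\cap C_2$ and choose an affine open $U=\mathrm{Spec}\,A\subset X$ containing $P$ but no other point of $C_1\cap C_2$. Since $X$ is smooth and each $C_i$ is a prime divisor, $I_{C_1}|_U=(f)$ and $I_{C_2}|_U=(g)$ are principal. Setting $Q=(f,g)$, the choice of $U$ makes $Q$ an $m_P$-primary ideal, and the hypothesis $D(I_{C_i})\subset I_{C_i}$ for $i=1,2$ gives $D(Q)\subset Q$.

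Next I would observe that the contribution of $P$ to the intersection number $C_1\cdot C_2$ is $\dim_k A/(f,g)$, which is therefore bounded above by $C_1\cdot C_2<p$. Consequently, for any $a\in m_P$ there is a smallest positive integer $m_0$ with $a^{m_0}\in Q$, and $m_0<p$. Applying $D$ yields $m_0\, a^{m_0-1}D(a)\in Q$. Since $m_0<p$ is a unit in $k$ and $a^{m_0-1}\notin Q$, the primary property of $Q$ forces $D(a)\in r(Q)=m_P$. Hence $D(m_P)\subset m_P$ and $P$ is a fixed point of $D$.

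There is no real obstacle: once the hypothesis $C_1\cdot C_2<p$ replaces the bound that Proposition~\ref{sec1-prop3} had to deduce from $K_X^2<p/(m^2+3m)$, the remaining argument is entirely local at $P$, uses only $D$-invariance of $I_{C_1}$ and $I_{C_2}$, and transfers without change. Thus the corollary is really just an isolation of the purely local mechanism inside the proof of Proposition~\ref{sec1-prop3}.
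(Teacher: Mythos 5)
Your proposal is correct and is exactly the paper's argument: the paper states Corollary~\ref{sec1-cor0} as an immediate byproduct of the proof of Proposition~\ref{sec1-prop3}, whose global hypotheses served only to establish $C_i\cdot C_j<p$, here assumed. The local mechanism you isolate (the $m_P$-primary ideal $Q=I_{C_1}+I_{C_2}$, the bound $\dim_k A/(f,g)<p$, and the derivation $D(a)\in r(Q)=m_P$) is verbatim the one in the paper.
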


Let $D$ be a vector field on a variety $X$. Then unlike the characteristic zero case~\cite{BW74}, $D$ does not fix the singular points of $X$. And even if it does fix a singular point, it may not fix its infinitely near points. 
\begin{example}
Let $X$ be given by $x^2-y^5=0$ in $\mathbb{A}^2_k$, where $k$ is any field of characteristic 2. Let $D=y\frac{\partial}{\partial x}$. Then it can be easily sen that $D$ is a vector field of additive type which fixes the singular point of $X$. Then $D$ lifts to a vector field $D^{\prime}$ of the blow up $X^{\prime} \rightarrow X$ of the singular point of $X$, but $D^{\prime}$ does not fix the singular point of $X^{\prime}$.
\end{example}

The next proposition shows that under certain conditions a vector field on a curve fixes the singular points of the curve.

\begin{proposition}\label{sec1-prop4}
Let $D$ be a nontrivial vector field of either additive or multiplicative type on a smooth surface $X$ defined over an algebraically closed field $k$ of characteristic $p>0$. Let $C \subset X$ be a reduced and irreducible curve such that $D(I_C)\subset I_C$, where $I_C$ is the ideal sheaf of $C$ in $X$. Suppose that $\mathrm{p}_a(C)<(p-1)/2$. Then $D$ fixes every singular and infinitely near singular point of $C$.
\end{proposition}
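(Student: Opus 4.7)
The plan is to reduce the problem to showing that $D$ fixes every actual singular point of $C$ and then to induct on the tower of infinitely near points. Once $D$ fixes a singular point $P$ of multiplicity $m\ge 2$, it lifts to a regular $p$-closed vector field $D'$ on the blow-up $f\colon X'\to X$ which preserves the ideal of the strict transform $C'$, and the arithmetic genus strictly drops: $p_a(C')=p_a(C)-\binom{m}{2}<(p-1)/2$. Since embedded resolution of a plane curve terminates in finitely many such blow-ups, iterating the base step handles all infinitely near singular points.

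For the base step, fix a singular point $P\in C$. If $D(\mathcal{O}_X)\subset I_C$, then $D$ vanishes along $C$ and $P$ is trivially fixed. Otherwise $D|_C$ is a nontrivial $k$-derivation of $\mathcal{O}_C$, which extends uniquely to a nontrivial $p$-closed derivation $\tilde D$ of $\mathcal{O}_{\bar C}$ (where $\pi\colon\bar C\to C$ is the normalization) of the same additive or multiplicative type. A short Leibniz computation shows that $\tilde D$ preserves the conductor ideal $\mathfrak{c}\subset\mathcal{O}_{\bar C}$: for $a\in\mathfrak{c}$ and $b\in\mathcal{O}_{\bar C}$, both $ab$ and $a\tilde D(b)$ lie in $\mathcal{O}_C$, hence $\tilde D(a)b=\tilde D(ab)-a\tilde D(b)\in\mathcal{O}_C$, which gives $\tilde D(a)\in\mathfrak{c}$.

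At each preimage $\bar P_i$ of $P$, with uniformizer $u_i$ of the DVR $\mathcal{O}_{\bar C,\bar P_i}$, the localized conductor has the form $(u_i^{n_i})$ for some integer $n_i\ge 1$. The relation $\tilde D(u_i^{n_i})=n_i u_i^{n_i-1}\tilde D(u_i)\in(u_i^{n_i})$ forces the key dichotomy: either $p\mid n_i$, or $\tilde D(u_i)\in(u_i)$, i.e.\ $\bar P_i$ is a zero of $\tilde D$. Because $C$ is a Cartier divisor on a smooth surface, $\mathcal{O}_{C,P}$ is Gorenstein, and the standard length identity gives $\sum_i n_i=2\delta_P$. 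Summing over all singular points of $C$ yields
\[
\sum_{P,i}n_{P,i}=2\delta(C)\le 2p_a(C)<p-1,
\]
so every $n_{P,i}$ is strictly less than $p$ and in particular is never divisible by $p$. Hence each preimage $\bar P_i$ is a zero of $\tilde D$. Since the maximal ideal $m_P$ is precisely the intersection of $\mathcal{O}_{C,P}$ with $\prod_i m_{\bar P_i}$ inside $\prod_i\mathcal{O}_{\bar C,\bar P_i}$, this yields $D|_C(m_P)\subset m_P$ and therefore $D(m_{X,P})\subset m_{X,P}$, as required.

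The main obstacle is exactly the scenario in which some $n_{P,i}$ is divisible by $p$: there the Leibniz dichotomy yields no information, and the example $x^2=y^5$ in characteristic $2$ given just before the proposition shows the conclusion can genuinely fail in that regime. Ruling it out is precisely the role of the hypothesis $p_a(C)<(p-1)/2$, which, through the Gorenstein identity $\sum_i n_i=2\delta_P$, forces $n_{P,i}<p$ for every branch. The remaining ingredients---unique extension of a derivation to the normalization, preservation of the conductor under $\tilde D$, and lifting $D$ to blow-ups of fixed points while controlling the new arithmetic genus---are standard.
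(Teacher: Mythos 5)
Your argument has a genuine gap at its first substantive step: the assertion that $D|_C$ ``extends uniquely to a nontrivial $p$-closed derivation $\tilde D$ of $\mathcal{O}_{\bar C}$'' and that this is standard. In characteristic $p$ a derivation of a one-dimensional local domain need not extend to its integral closure (Seidenberg's extension theorem requires $\mathbb{Q}$ in the ring), and the paper's own cautionary example just before the proposition exhibits the failure: for $C\colon x^2=y^5$ in characteristic $2$ with $D=y\,\partial/\partial x$, the normalization is $k[t]$ with $x=t^5$, $y=t^2$, and an extension would have to satisfy $t^4\tilde D(t)=\tilde D(x)=y=t^2$, i.e.\ $\tilde D(t)=t^{-2}\notin k[t]$. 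So the extension is not automatic; it is precisely the hypothesis $p_a(C)<(p-1)/2$ that must be invoked to rule out such behaviour, and you never do so at this point --- you only use the genus bound later, to control the conductor exponents, after the extension has already been assumed. Note that in the paper the lift of $D|_C$ to $\bar C$ is obtained in Corollary~\ref{sec1-cor2} as a \emph{consequence} of Proposition~\ref{sec1-prop4}, not used as an input; assuming it at the outset is very close to assuming the conclusion, since your own conductor computation shows that the extension plus the genus bound already yield the fixed points.

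Conditional on that step, the rest of your argument is sound and genuinely different from the paper's: the Leibniz computation giving $\tilde D(\mathfrak{c})\subset\mathfrak{c}$, the dichotomy ``$p\mid n_i$ or $\tilde D(u_i)\in(u_i)$'', the Gorenstein identity $\sum_i n_i=2\delta_P$ combined with $\delta(C)\le p_a(C)<(p-1)/2$ to force $n_i<p$, the descent from $D|_C(m_P)\subset m_P$ to $D(m_{X,P})\subset m_{X,P}$, and the induction over blow-ups are all correct. The paper instead works on the ambient surface: at a hypothetical non-fixed singular point it uses the local normal form $\mathcal{O}_Y=k[[x^p,y]]$ for the quotient by the $\alpha_p$ or $\mu_p$ action and Weierstrass preparation to write the local equation of $C$ as $y^m+h(x^p,y)$ with $h\in(x,y)^{p+1}$, and then shows by explicit blow-ups that such a singularity forces $p_a(C)\ge(p-1)/2$; this is exactly the analysis that substitutes for your missing extension statement. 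To repair your proof you would need to show, using the genus hypothesis, that the rational extension of $D|_C$ to $\bar C$ has no poles at the branches over the singular points --- for instance by first bounding its pole order in terms of the conductor and then excluding poles --- and at that point you would essentially be redoing the paper's local computation.
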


\begin{proof}
We may assume that $D(\mathcal{O}_X)\not\subset I_C$ and hence the restriction of $D$ on $C$ is not trivial (otherwise the result is obvious).

Let $\pi \colon X \rightarrow Y$ be the quotient of $X$ by the $\alpha_p$ or $\mu_p$ action on $X$ induced by $D$. Then $\pi$ is a purely inseparable morphism of degree $p$. Let $\tilde{C}=\pi(C)\subset Y$. Then $C=\pi^{\ast}\tilde{C}$ and $\pi_{\ast}C=p\tilde{C}$~\cite{RS76}. Let $P\in C$ be a singular point of $C$ and $Q=\pi(P)\in Y$. If $P$ is a fixed point of $D$ then there is nothing to prove. Suppose that $P$ is not a fixed point of $D$. Then $Q\in Y$ is a smooth point of $Y$~\cite{AA86}. Hence locally around $Q \in Y$, $X\rightarrow Y$ is an $\alpha_p$ or $\mu_p$ torsor and hence the same holds for $C\rightarrow \tilde{C}$. Consider cases with respect to whether $Q \in \tilde{C}$ is a singular or a smooth point of $C$.

\textbf{Case 1.} $Q\in \tilde{C}$ is singular. Then since $P\in X$ is not a fixed point of $D$, in suitable local analytic coordinates at $P$, $\mathcal{O}_X=k[[x,y]]$, $D=h(x,y) \partial/\partial x$ and $\mathcal{O}_Y=k[[x^p,y]]$~\cite[Theorem 1]{RS76}. Then $I_{\tilde{C}}=(f(x^p,y))$ and since it is assumed that $Q\in \tilde{C}$ is singular, $f(x^p,y)\in (x^p,y)^2$. Then $I_C=(f(x^p,y)) \subset k[[x,y]]$. Write $f(x^p,y)=\sum_i f_i(x^p)y^i$. Then either $m_P(f(x^p,y)) \geq p$ (considered in $k[[x,y]]$) or there exists an $m\geq 1$ such that $f_m(x^p)$ is a unit in $k[[x^p]]$. 

The first case is easily seen to be impossible since $C$ is assumed to have arithmetic genus less than $p$ and a curve of arithmetic genus less than $p$ cannot have a point of multiplicity bigger than $p$.

Suppose then that there exists an $m \geq 1$ such that $f_m(x^p)$ is a unit in $k[[x^p]]$. By using the Weierstrass preparation theorem in $k[[x^p,y]]$ it follows that 
\[
f(x^p,y)=u(x^p,y)[f_0(x^p)+f_1(x^p)y+\cdots + f_{m-1}(x^p)y^{m-1}+y^m],
\]
where $f_i(x^p)\in (x^p)$, for all $0\leq m-1$ and $u(x^p,y)$ is a unit in $k[[x^p,y]]$ and hence also in $k[[x,y]]$. In fact $m \geq 2$ since it assumed that $Q\in \tilde{C}$ is singular. Then $I_C=(y^m+h(x^p,y))$, where 
\[
h(x^p,y)=f_0(x^p)+f_1(x^p)y+\cdots + f_{m-1}(x^p)y^{m-1} \in (x,y)^{p+1}\subset k[[x,y]]
\]
and $m\geq 2$. Suppose that $m\geq p$. Then $m_P(C) \geq p$ and hence $p_a(C) \geq p$, which is impossible since by assumption 
$\mathrm{p}_a(C)\leq (p-1)/2$. Suppose that $m<p$.  Then write $p=sm+r$, $0<r<m$. After blowing up $P\in C$ and its infinitely near singular points $s$ times we see by using the adjunction formula that 
\begin{gather}\label{sec1-prop4-eq1}
2\mathrm{p}_a(C) \geq sm(m-1).
\end{gather}
Suppose that $m\geq (p+1)/2$. Then $m-1\geq (p-1)/2$ and hence  from the above inequality it follows that 
\[
\mathrm{p_a}(C)\geq \left(\frac{sm}{2}\right)\left(\frac{p-1}{2}\right)\geq \frac{p-1}{2},
\]
since $m\geq 2$. 

Suppose that $m < (p+1)/2$. Then also $r<m < (p+1)/2$. Then $p-r>(p-1)/2$ and hence
\begin{gather}\label{sec1-eq-111}
\mathrm{p}_a(C)\geq \frac{1}{2}sm(m-1)=(p-r)\frac{m-1}{2} \geq \left(\frac{p-1}{2}\right) \left(\frac{m-1}{2}\right).
\end{gather}
Suppose that $m\geq 3$. Then from the above inequality it follows that $\mathrm{p}_a(C)\geq (p-1)/2$. Suppose that $m=2$. Then  $s=(p-1)/2$ and $r=1$. Then from the equation~\ref{sec1-eq-111} it follows again that $\mathrm{p}_a(C)\geq (p-1)/2$.

\textbf{Case 2.} $Q\in \tilde{C}$ is smooth. Then  $C \rightarrow \tilde{C}$ is a $\mu_p$ or $\alpha_p$ torsor. Hence 
\[
\mathcal{O}_C=\frac{\mathcal{O}_{\tilde{C}}[t]}{(t^p-s)}
\]
where $s\in \mathcal{O}_{\tilde{C}}$. Let $x$ be local analytic coordinate of 
$\tilde{C}$ at $Q$. Then locally analytically at $Q\in \tilde{C}$, $\mathcal{O}_{\tilde{C}}=k[[x]]$ and $s=f(x)\in k[[x]]$. Moreover, since $P\in C$ is singular, $f(x)\in (x^2)$. Therefore
\[
\mathcal{O}_C=\frac{\mathcal{O}_{\tilde{C}}[t]}{(t^p-s)}=\frac{k[[x,t]]}{(t^p-f(x))}.
\]
Then one can write $f(x)=x^m u(x)$, where $u(x)$ is a unit in $k[[x]]$. If $m< p$ then $\sqrt[m]{u(x)}$ exists and therefore locally analytically at $P$, 
\[
\mathcal{O}_C\cong \frac{k[[x,y]]}{(t^p-x^m)}.
\]
If $p \leq m$ then since $k$ has characteristic $p$, the $\sqrt[m]{u(x)}$ does not always exist. But in this case $m_P(\mathcal{O}_{C,P})\geq p$ which is impossible since $\mathrm{p}_a(C)<p$. Hence $I_C=(t^p-x^m)$, $m\geq 2$. Then by using the same argument as in Case 1 it follows that $p_a(C)\geq (p-1)/2$, which is impossible.

So far it has been proved that every singular point of $C$ is a fixed point of $D$ as well. Hence $D$ lifts to the blow up of $X$ at any singular point of $C$. Then by repeating the previous arguments it follows that $D$ fixes every infinitely near singular point of $C$ as well.
\end{proof}

\begin{corollary}\label{sec1-cor2}
With assumptions as in Proposition~\ref{sec1-prop4}. Suppose in addition that $C$ is singular. Let $D_c$ be the vector field on $C$ induced by $D$. Suppose that $D_c\not=0$. Let $\bar{C}\rightarrow C$ be the normalization of $C$. Then $\bar{C}\cong \mathbb{P}^1_k$. Moreover
\begin{enumerate}
\item Suppose that $D^p=0$. Then $D$ has exactly one fixed point on $C$.
\item Suppose that $D^p=D$. Then $D$ has exactly two distinct points on $C$ (perhaps infinitely near).
\end{enumerate}
In particular, $C$ is rational.

\end{corollary}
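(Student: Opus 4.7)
The strategy is to transfer $D$ to the normalization $\nu\colon\bar C\to C$. The derivation $D_c$ on $\mathcal{O}_C$ extends uniquely to a nonzero derivation $\bar D$ on $\mathcal{O}_{\bar C}$, satisfying $\bar D^p=0$ or $\bar D^p=\bar D$ accordingly. Since $\bar D$ is a nonzero global section of $T_{\bar C}$, we have $g(\bar C)\leq 1$.

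To conclude $\bar C\cong\mathbb{P}^1$, and hence that $C$ is rational, I would rule out $g(\bar C)=1$ by establishing the following local claim: \emph{if $P\in C$ is a singular point fixed by $D$, then $\bar D$ vanishes at some point $\bar P\in\bar C$ above $P$.} Granted this, since $C$ is singular and every singular point of $C$ is fixed by $D$ by Proposition~\ref{sec1-prop4}, $\bar D$ has a zero on $\bar C$ — impossible on an elliptic curve whose tangent bundle is trivial. Hence $g(\bar C)=0$.

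For the local claim, I reduce to the unibranch case (otherwise one restricts to any singular branch of $P$, or, if all branches at $P$ are smooth, exhibits elements of $\hat{\mathcal{O}}_{C,P}$ isolating a single branch and applies the same argument). So let $R=\hat{\mathcal{O}}_{C,P}\subset\bar R=k[[s]]$ be the complete local ring at a unibranch $P$. Because $R$ is singular, $R$ contains no element of $s$-valuation one — otherwise such an element would be a uniformizer of $\bar R$ contained in $R$, forcing $R=\bar R$ by completeness. Let $n_0\geq 2$ be the minimum positive $s$-valuation of elements of $R$; then the delta-invariant satisfies $\delta_P\geq n_0-1$, and the hypothesis $\delta_P\leq p_a(C)<(p-1)/2$ gives $n_0<p$, whence $\gcd(n_0,p)=1$. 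Picking $b\in R$ with $v_s(b)=n_0$, the formal derivative $b'(s)\in k[[s]]$ has $s$-valuation exactly $n_0-1$ with unit leading coefficient, so the identity $\bar D(b)=b'(s)\bar D(s)$ combined with $D(b)\in\mathfrak{m}_R\subset s^{n_0}k[[s]]$ forces $\bar D(s)\in(s)$, as required.

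Finally, with $\bar C\cong\mathbb{P}^1$, I classify $\bar D$ using $\mathrm{PGL}_2$-conjugacy on $\mathfrak{sl}_2=H^0(\mathbb{P}^1,T_{\mathbb{P}^1})$. A nonzero nilpotent element (the additive case $\bar D^p=0$) is conjugate to a scalar multiple of $\partial_t$, with zero divisor $2[\infty]$: one fixed point on $\bar C$, hence on $C$, giving (1). A nonzero semisimple element (the multiplicative case $\bar D^p=\bar D$) is conjugate to $\lambda t\partial_t$ for some $\lambda\in\mathbb{F}_p^\times$, with zero divisor $[0]+[\infty]$: two distinct fixed points on $\bar C$, which may map to the same point of $C$ and appear as infinitely near, giving (2). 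The main obstacle is the local claim; the $\mathfrak{sl}_2$ classification and its translation to fixed point counts is standard.
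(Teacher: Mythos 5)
Your proposal is correct and follows the same skeleton as the paper's proof: lift $D_c$ to the normalization, conclude $g(\bar C)\leq 1$ from the existence of a nonzero vector field, exclude the elliptic case by producing a zero of $\bar D$ over a singular point, and then read off the fixed-point count from the classification of additive and multiplicative vector fields on $\mathbb{P}^1$ (your $\mathfrak{sl}_2$-conjugacy classification is the same computation the paper carries out explicitly in the two affine charts). The one place where you genuinely diverge is the key step that $\bar D$ vanishes above the singular points: the paper obtains both the lift $\bar D$ and its vanishing from Proposition~\ref{sec1-prop4}, by resolving $C$ through blow-ups at fixed (infinitely near) points and asserting that the resulting lift fixes the preimages, whereas you construct $\bar D$ by Seidenberg extension and prove the vanishing directly by a valuation computation in $\hat{\mathcal{O}}_{C,P}\subset k[[s]]$, using $n_0-1\leq \delta_P\leq p_a(C)<(p-1)/2$ to guarantee $p\nmid n_0$. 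Your local argument is more self-contained and makes explicit a point the paper leaves implicit. One refinement for the multibranch case: the containment $\mathfrak{m}_R\subset s^{n_0}k[[s]]$ that drives your unibranch computation fails when there are several branches (e.g.\ for a tacnode $\mathfrak{m}_R$ surjects onto all of $(s_1)$), so "the same argument" must be run with the branch-isolating ideal $I_j=R\cap\bigl(0\times\cdots\times k[[s_j]]\times\cdots\times 0\bigr)$ in place of $\mathfrak{m}_R$: one checks that $D(b)$ again lies in $I_j$ and that the minimal valuation $n_j$ occurring in $I_j$ satisfies $n_j\leq\delta_P<p$, which restores the contradiction. With that adjustment the plan is complete.
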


\begin{proof}
By Proposition~\ref{sec1-prop4}, $D$ fixes the singular points of $C$ and all its infinitely near singular points as well. Hence $D_c$ lifts to a vector field 
$\bar{D}$ on the  normalization $\bar{C} \rightarrow C$. Considering that smooth curves of arithmetic genus greater or equal than 2 do not have nontrivial global vector fields, it follows that 
$\mathrm{p}_a(\bar{C})\leq 1$.  Suppose that $\bar{C}$ is a smooth elliptic curve. In this case $T_C=\mathcal{O}_C$ and hence the unique global vector field of $\bar{C}$ has no fixed points. This case is impossible since $\bar{D}$ fixes the preimages of the singular points of $C$. 
 Hence $\bar{C}=\mathbb{P}^1$. In this case $T_{\bar{C}}=\omega_{\mathbb{P}^1}^{-1}=\mathcal{O}_{\mathbb{P}^1}(2)$. Hence $\mathbb{P}^1$ has three linearly independent global vector fields $D_i$, $i=1,2,3$. These vector fields are induced from the homogeneous vector fields $D_1=x\frac{\partial}{\partial x}$, $D_2=x\frac{\partial}{\partial y}$ and $D_3=y\frac{\partial}{\partial x}$ of $k[x,y]$. Note that $D_1^p=D_1$ and $D_i^p=0$, $i=2,3$. Hence there are $a_i \in k$, $i=1,2,3$, such that $\bar{D}=a_1D_1+a_2D_2+a_3D_3$. 

\textbf{Claim:} $\bar{D}^p=\bar{D}$ if and only if $a_2=a_3=0$ and $a_1\in\mathbb{F}_p^{\ast}$, and $\bar{D}^p=0$ if and only if $a_1^2+4a_2a_3=0$. 

In order to show this restrict $\bar{D}$ to the standard affine cover of $\mathbb{P}^1$.

Let $U \subset \mathbb{P}^1$ be the open affine subset given by $y\not= 0$. Let $u=x/y$. Then an easy calculation shows that $D_1=u\frac{d}{du}$, $D_2=-u^2\frac{d}{du}$ and $D_3=\frac{d}{du}$. Therefore 
\[
\bar{D}=(-a_2u^2+a_1u+a_3)\frac{d}{du}
\]
in $U$. I will now show that this is additive if and only if $-a_2u^2+a_1u+a_3=0$ has either a double root or no roots and multiplicative if and only if $a_2=0$ and $a_1\in \mathbb{F}_p$. Suppose that the previous equation  has a double root, and hence $a_1^2+4a_2a_3=0$. Then after a linear automorphism of $k[u]$, $\bar{D}=au^2\frac{d}{du}$, $a\in k$. This can easily verified to be additive. Suppose on the other hand that $-a_2u^2+a_1u+a_3=0$ has either two distinct roots or only one simple root (hence $a_2=0$). Suppose that $a_2\not=0$ and hence it has two distinct roots. Then after a linear automorphism of $k[u]$, $\bar{D}=a(u^2+u)\frac{d}{du}$. Then an easy calculation shows that
\[
D^p(u^{p-1})=a^p(p-1)^p(u^p+u^{p-1})=-a^p(u^p+u^{p-1})\not= 0.
\]
Hence in this case $\bar{D}$ is neither additive or multiplicative. Hence $a_2=0$ and $\bar{D}=(a_1u+a_3)\frac{d}{du}$. Then $\bar{D}^p=a_1^{p-1}\bar{D}$. Hence $\bar{D}^p=\bar{D}$ if and only if $a_1^{p-1}=1$ and therefore if and only if $a_1\in\mathbb{F}_p$. 

Let $V$ be the affine open subset of $\mathbb{P}^1$ given by $x\not=0$. Let $v=y/x$. Then in $V$, $D_1=- v\frac{d}{dv}$, $D_2=\frac{d}{dv}$ and $D_3=-v^2\frac{d}{dv}$. Therefore
\[
\bar{D}=(-a_3v^2-a_1v+a_2)\frac{d}{dv}.
\]
Suppose that $\bar{D}$ is additive. Then similar arguments as before show that $a_1^2+4a_2a_3 =0$. Suppose that $\bar{D}$ is of multiplicative type. Then as before we get that $a_3=0$. This concludes the proof of the claim.

Suppose now that $\bar{D}$ is of multiplicative type. Then it has been shown that $\bar{D}=ax\frac{\partial}{\partial x}$, $a\in \mathbb{F}_p^{\ast}$. The fixed points of this are $[0,1]$ and $[1,0]$. In particular it has exactly two distinct fixed points. 

Suppose that $\bar{D}$ is of additive type. Then from the previous arguments it follows that $\bar{D}$ has a single fixed point (but with double multiplicity). 

Hence if $D^p=D$, then $D$ has at most two distinct points and if $D^p=0$ then it has just one.
\end{proof}

The next results will also be needed in the proof of the main theorem.

\begin{proposition}[Corollary 7.9~\cite{Ha77}]\label{sec1-prop5}
Let $X$ be an integral normal projective variety over an algebraically closed field $k$. Let $Y \subset X$ be a closed subscheme of $X$ which is the support of an effective ample divisor. Then $Y$ is connected.
\end{proposition}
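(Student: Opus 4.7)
The statement is a standard result from Hartshorne, so the proof proposal is essentially a sketch of the textbook argument.

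The strategy is to show that $Y$ has no nontrivial decomposition as a disjoint union of closed subschemes, equivalently that $H^0(Y,\mathcal{O}_Y)$ contains no idempotents besides $0$ and $1$. A convenient reduction is to replace $Y$ by the scheme $nD$ for some $n\geq 1$: since $\mathrm{Supp}(nD)=\mathrm{Supp}(D)=Y$ for every positive $n$ and connectedness of a scheme depends only on its underlying topological space, it suffices to prove that the closed subscheme $nD$ is connected for $n$ sufficiently large. For such $n$ we can also arrange $nD$ to be very ample.

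The plan is to exploit the ideal sheaf short exact sequence
\[
0 \rightarrow \mathcal{O}_X(-nD)\rightarrow \mathcal{O}_X \rightarrow \mathcal{O}_{nD} \rightarrow 0
\]
together with its long exact sequence on cohomology. Since $X$ is an integral, normal, projective variety over the algebraically closed field $k$, one has $H^0(X,\mathcal{O}_X)=k$. Moreover, since $-nD$ is the class of an anti-ample divisor on a positive-dimensional projective variety, $H^0(X,\mathcal{O}_X(-nD))=0$. Putting this in the long exact sequence yields an injection $k\hookrightarrow H^0(nD,\mathcal{O}_{nD})$, and the proposition reduces to showing that this injection is an isomorphism for $n\gg 0$; indeed, $H^0(nD,\mathcal{O}_{nD})=k$ forces the scheme $nD$ to be connected.

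The main step, and also the principal obstacle, is the surjectivity claim, because in positive characteristic one cannot invoke Kodaira vanishing to conclude $H^1(X,\mathcal{O}_X(-nD))=0$. The right tool is the theorem on formal functions (Hartshorne III.11). Applied to the closed subscheme $Y\subset X$, it shows that the formal completion $\widehat{\mathcal{O}}_{X,Y}$ is the inverse limit $\varprojlim_n \mathcal{O}_{nD}$, and taking global sections gives $\varprojlim_n H^0(nD,\mathcal{O}_{nD})=\widehat{H^0(X,\mathcal{O}_X)}=k$, where the completion is with respect to the defining ideal of $Y$ and is trivial because $H^0(X,\mathcal{O}_X)=k$ is already a field. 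Combined with the Mittag-Leffler property of the system and the injection from Step~2, this forces $H^0(nD,\mathcal{O}_{nD})=k$ for all sufficiently large $n$, which completes the proof.
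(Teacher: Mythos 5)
Your first two steps are fine and standard: it suffices to show $H^0(nD,\mathcal{O}_{nD})=k$ for some $n$ with $\mathrm{Supp}(nD)=Y$, and the ideal sequence together with $H^0(X,\mathcal{O}_X)=k$ and $H^0(X,\mathcal{O}_X(-nD))=0$ gives the injection $k\hookrightarrow H^0(nD,\mathcal{O}_{nD})$. The gap is the surjectivity step, which is where all the content lies. The theorem on formal functions (Hartshorne III.11.1, i.e.\ Grothendieck's comparison theorem) computes cohomology of the completion of $X$ along the \emph{fibre of a proper morphism}; it says nothing about the completion of $X$ along an arbitrary closed subscheme, and no morphism here has $Y$ as a fibre (an ample divisor cannot be a fibre). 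The identity $\varprojlim_n H^0(nD,\mathcal{O}_{nD})=H^0(X,\mathcal{O}_X)=k$ that you assert is the Lefschetz condition $\mathrm{Lef}(X,Y)$ of SGA~2; it is a theorem in its own right when $Y$ supports an ample divisor on a normal variety of dimension $\geq 2$, essentially equivalent to (in fact stronger than) the connectedness you are proving, and it is simply false for general closed subschemes: completing $\mathbb{P}^2$ along a point gives $k[[x,y]]$, not $k$, and completing a curve along the (automatically ample) divisor $P+Q$ gives $k[[t]]\times k[[t]]$. A telling symptom is that your argument never uses normality of $X$ nor $\dim X\geq 2$, yet both are necessary --- on a curve the support of an effective ample divisor is an arbitrary nonempty finite set, usually disconnected. (The paper's statement omits the hypothesis $\dim X\geq 2$ present in Hartshorne's Corollary III.7.9, but it is only ever applied to surfaces; the paper itself gives no proof, just the citation.)

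The correct way to obtain surjectivity of $H^0(X,\mathcal{O}_X)\rightarrow H^0(nD,\mathcal{O}_{nD})$ --- and the engine behind Hartshorne's Corollary III.7.9 --- is the Enriques--Severi--Zariski vanishing: for $X$ normal projective of dimension $d\geq 2$ and $D$ ample, $H^1(X,\mathcal{O}_X(-nD))=0$ for $n\gg 0$. Your instinct that Kodaira vanishing is unavailable in characteristic $p$ is right, but this lemma is not Kodaira and is characteristic-free: in the Cohen--Macaulay case it is Serre duality plus Serre vanishing, $H^1(X,\mathcal{O}_X(-nD))\cong H^{d-1}(X,\omega_X^{\circ}(nD))^{\vee}=0$ for $n\gg 0$ since $d-1\geq 1$ (Hartshorne III.7.8), and in the general normal case it is Zariski's original depth argument on the cone. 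Substituting this vanishing for your formal-functions step yields the textbook proof.
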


\begin{proposition}\label{sec1-prop-6}
Let $f \colon Y \rightarrow X$ be a composition of $n$ blow ups starting from a smooth point $P \in X$ of a surface $X$. Let $C \subset X$ be an integral curve in $X$ passing through $P$ and let $m=m_Q(C)$ be the multiplicity of $C$ at $P\in C$. Then
\[
mK_Y-f^{\ast}C+C^{\prime}=mf^{\ast}K_X+\sum_{k=1}^n(km-a_1-a_2-\ldots -a_k)E_k,
\]
where $E_i$, $1\leq i \leq n$ are the $f$-exceptional curves, $C^{\prime}$ is the birational transform of $C$ in $Y$  and $0\leq a_i \leq m$, are nonnegative integers.
\end{proposition}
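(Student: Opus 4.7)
The plan is to proceed by induction on $n$. For the base case $n=1$, the morphism $f$ is the blowup of the smooth point $P$ with single exceptional divisor $E_1$; the standard formulas $K_Y = f^{\ast}K_X + E_1$ and $f^{\ast}C = C^{\prime} + mE_1$ give at once
\[
mK_Y - f^{\ast}C + C^{\prime} = mf^{\ast}K_X,
\]
matching the statement with $a_1=m$, since then $1\cdot m - a_1 = 0$ is the coefficient of $E_1$.

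For the inductive step I factor $f = g\circ h$, where $h\colon\bar{Y}\to X$ is the composition of the first $n-1$ blowups and $g\colon Y\to\bar{Y}$ is the last blowup, centered at a point $P_n\in\bar{Y}$ lying over $P$. Let $\bar{C}$ be the strict transform of $C$ on $\bar{Y}$ and let $\bar{E}_1,\ldots,\bar{E}_{n-1}$ be the $h$-exceptional curves. By the inductive hypothesis there exist integers $a_1,\ldots,a_{n-1}\in[0,m]$ satisfying
\[
mK_{\bar{Y}} - h^{\ast}C + \bar{C} = mh^{\ast}K_X + \sum_{k=1}^{n-1}(km - a_1 - \cdots - a_k)\bar{E}_k.
\]
Applying $g^{\ast}$ and substituting the blowup identities $g^{\ast}K_{\bar{Y}} = K_Y - E_n$, $g^{\ast}\bar{C} = C^{\prime} + a_n^{\prime}E_n$ with $a_n^{\prime}:=m_{P_n}(\bar{C})$, and $g^{\ast}\bar{E}_k = E_k + \epsilon_k E_n$ (where $\epsilon_k\in\{0,1\}$ equals $1$ precisely when $P_n\in\bar{E}_k$), I collect coefficients and define $a_n$ as the unique integer making the coefficient of $E_n$ equal to $nm - a_1 - \cdots - a_n$.

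The main obstacle will be to verify $0\le a_n\le m$. The crucial input is the classical estimate $a_n^{\prime}\le m$: for the blowup $\sigma$ of a smooth surface point $P$ with exceptional curve $E$, the strict transform of an integral curve $C$ of multiplicity $m$ at $P$ satisfies $C^{\prime}\cdot E = m$, and since $E$ is smooth one has $\sum_{P'\in C^{\prime}\cap E}m_{P'}(C^{\prime})\le m$; iterating this through the tower of blowups lying over $P$ gives $m_{P_n}(\bar{C})\le m$. What remains is combinatorial bookkeeping: using the tree structure of the configuration of $f$-exceptional curves lying over $P$ together with the inductive form of the coefficients $km - a_1 - \cdots - a_k$, one has to check that the incidence contributions $\epsilon_k$ combine with $a_n^{\prime}$ to force $a_n\in[0,m]$. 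I expect the lower bound $a_n\ge 0$ to be the more delicate half, following from a monotonicity property along chains in the exceptional tree.
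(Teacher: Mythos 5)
Your induction is set up the same way the paper intends (the paper declares the proof ``a simple induction on $n$'' and omits it), and your base case and pullback identities $g^{\ast}K_{\bar Y}=K_Y-E_n$, $g^{\ast}\bar C=C^{\prime}+a_n^{\prime}E_n$, $g^{\ast}\bar E_k=E_k+\epsilon_kE_n$ are correct. But the proof stops exactly where the content of the proposition lies: solving for the coefficient of $E_n$ gives $a_n=a_n^{\prime}+c_{n-1}-\sum_{k<n}\epsilon_kc_k$ with $c_k=km-a_1-\cdots-a_k$, and you defer the verification $0\le a_n\le m$ to ``combinatorial bookkeeping.'' That bookkeeping does not close in the generality allowed by the statement. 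Concretely, let $m=3$ and let $C$ have a triple point at $P$ with branches $y=\lambda x$, $y=x^2$, $y=-x^2$ ($\lambda\neq 0$); after blowing up $P$ the strict transform meets $E_1$ transversally at a smooth point $Q_1$ and has an ordinary node at $Q_2$. Blowing up $Q_1$ second and $Q_2$ third, one computes $K_Y=f^{\ast}K_X+E_1+2E_2+2E_3$ and $f^{\ast}C=C^{\prime}+3E_1+4E_2+5E_3$, so the coefficients of $E_1,E_2,E_3$ in $3K_Y-f^{\ast}C+C^{\prime}$ are $0,2,1$, forcing $a_1=3$, $a_2=1$, $a_3=4>m$. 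Thus $a_n\neq a_n^{\prime}$ and the bound $a_n\le m$ fails as soon as the centers branch over $P$ instead of forming a chain; your correct estimate $a_n^{\prime}\le m$ does not transfer to $a_n$ (and at satellite centers the lower bound $a_n\ge 0$ can fail as well).

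What the paper actually extracts from this proposition is only that the coefficients of the $E_k$ in $mK_Y-f^{\ast}C+C^{\prime}$ are nonnegative, and that is better proved without the recursion. Writing $E_k^{\mathrm{tot}}$ for the total transform in $Y$ of the $k$-th exceptional divisor and $\mu_k=m_{P_k}(C_{k-1})$ for the multiplicity of the strict transform of $C$ at the $k$-th center, one has $K_Y=f^{\ast}K_X+\sum_kE_k^{\mathrm{tot}}$ and $f^{\ast}C=C^{\prime}+\sum_k\mu_kE_k^{\mathrm{tot}}$, hence
\[
mK_Y-f^{\ast}C+C^{\prime}=mf^{\ast}K_X+\sum_{k=1}^{n}(m-\mu_k)E_k^{\mathrm{tot}} .
\]
Since every center is infinitely near $P$, your iterated intersection argument gives $\mu_k\le m$ for all $k$, and each $E_k^{\mathrm{tot}}$ is an effective combination of the curves $E_j$, so every coefficient is nonnegative. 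I would either recast the proposition in this form, or add the hypothesis that each blowup is centered on the exceptional curve of the immediately preceding one (a chain of free infinitely near points), in which case your induction does close with $a_k=\mu_k$. As written, the deferred step is not mere bookkeeping: it is false.
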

The proof of the proposition is by a simple induction on the number of blow ups $n$ and is omitted.

\begin{proposition}\label{sec1-prop-7}
Let $P\in S$ be a Duval singularity and let $C\subset S$ be a smooth curve such that $P\in S$. Let $f \colon S^{\prime}\rightarrow S$ be the minimal resolution of $P\in S$,  and $E_i$, $i=1,\dots, n$ be the $f$-exceptional curves. Let $C^{\prime}$ be the birational transform of $C$ in $S^{\prime}$ and $a_i>0$, $1\leq i \leq n$ be positive rational numbers such that
\[
f^{\ast}C=C^{\prime}+\sum_{i=1}^na_i E_i.
\]
Then
\begin{enumerate}
\item Suppose that $P\in S$ is of type $A_n$. Then $(n+1)C$ is Cartier in $S$ and $(n+1)a_i$ are positive integers $\leq n$, $i=1,\dots, n$.
\item Suppose that $P\in S$ is of type $D_n$. Then $4C$ is Cartier in $S$ and $4a_i$are integers $\leq n$, $i=1,\ldots, n$.
\item Suppose that $P\in S$ is of type $E_6$. Then $3C$ is Cartier in $S$ and $3a_i$are integers $\leq 6$, $i=1,\ldots, 6$.
\item Suppose that $P\in S$ is of type $E_7$. Then $2C$ is Cartier in $S$ and $2a_i$are integers $\leq 7$, $i=1,\ldots, 7$.
\end{enumerate}
\end{proposition}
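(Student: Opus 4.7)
The plan is to decode the coefficients $a_i$ by imposing the numerical condition $f^{\ast}C\cdot E_j=0$, combine this with the explicit structure of the inverse of the Cartan matrix of the exceptional configuration, and then use the hypothesis that $C$ is smooth to pin down where $C'$ meets the exceptional locus. Write $c_j:=C'\cdot E_j\geq 0$ and pair $f^{\ast}C=C'+\sum_ia_iE_i$ with each $E_j$ to get the linear system $\sum_i a_i(E_i\cdot E_j)=-c_j$. If $M:=-(E_i\cdot E_j)$ is the Cartan matrix of the corresponding type, this is $Ma=c$, so $a=M^{-1}c$.

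The integrality/Cartier claim follows from the classical identity $\det M=n+1,4,3,2$ for $A_n,D_n,E_6,E_7$ respectively. This yields that $(\det M)M^{-1}$ has integer entries, hence $(n+1)a_i,4a_i,3a_i,2a_i\in\mathbb{Z}$. The same determinant is the order of the local divisor class group of $P\in S$, so the corresponding multiple of any Weil divisor near $P$ is Cartier. Equivalently, the pullback $f^{\ast}\bigl((\det M)C\bigr)=(\det M)C'+\sum(\det M)a_iE_i$ has integer coefficients on the smooth surface $S'$, which implies $(\det M)C$ is Cartier near $P$.

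For the upper bounds I would use the smoothness of $C$ to constrain the vector $c=(c_j)$. Working in local analytic coordinates, for $A_n$ ($xy=z^{n+1}$) the tangent cone at $P$ (for $n\geq 2$) is $xy=0$, forcing $C$ to be tangent to one of the two components; an iterated blow-up computation then shows that $C'$ is transverse to exactly one of the endpoints $E_1$ or $E_n$, i.e. $c=e_1$ or $c=e_n$. Analogous (but more involved) analyses of the tangent cones of $D_n,E_6,E_7$ combined with the blow-up sequence that produces the minimal resolution restrict the support of $c$ to a single vertex of the Dynkin diagram (with multiplicity $1$), typically a terminal one or, in a few $D_n,E_6,E_7$ subcases, the branch vertex. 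Plugging these distinguished columns of $M^{-1}$ into $a=M^{-1}c$ and using the closed-form entries $(n+1)(M^{-1})_{ij}=\min(i,j)\bigl(n+1-\max(i,j)\bigr)$ for $A_n$ (and the tabulated entries of $M^{-1}$ for $D_n,E_6,E_7$) yields the stated bounds $n,n,6,7$ on $(n+1)a_i,4a_i,3a_i,2a_i$.

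The main obstacle is the case analysis in Step~3: one must verify that for every type of Du Val singularity, the only possible intersection patterns of the strict transform of a smooth curve with the exceptional configuration are the ones used above; equivalently, that the distinguished columns of $M^{-1}$ are indeed the only ones that can occur. The bookkeeping for $D_n$ and $E_7$ is the most delicate, but once the possible $c$'s are enumerated, the rest is a direct evaluation of known entries of $M^{-1}$.
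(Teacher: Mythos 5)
The paper offers no actual proof of this proposition: it declares the computation ``straightforward'' and omits it, pointing to a similar computation elsewhere. Your Steps 1 and 2 are exactly the computation the paper has in mind, and they are correct: pairing $f^{\ast}C=C'+\sum_i a_iE_i$ with each $E_j$ gives $Ma=c$ with $M$ the Cartan matrix and $c_j=C'\cdot E_j\geq 0$; the determinants $n+1,4,3,2$ give the integrality of $(\det M)a_i$; and since for a rational surface singularity a Weil divisor is Cartier exactly when its pullback has integral exceptional coefficients, $(\det M)C$ is Cartier.

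The gap is Step 3, and it is not merely unfinished bookkeeping. It is false that smoothness of $C$ forces $C'$ to meet the exceptional configuration only in an end vertex. On the $A_3$ point $S=\{xy=z^4\}$ take $C=\{x=y=z^2\}$, the image of $t\mapsto(t^2,t^2,t)$: this is a smooth curve through $P$ whose tangent line is the singular line $\{x=y=0\}$ of the tangent cone $\{xy=0\}$. Its strict transform under the first blow-up passes through the residual $A_1$ point where the two end components meet, and after the second blow-up $C'$ meets the middle curve $E_2$ transversally and meets nothing else. So $c=e_2$ and $a=M^{-1}e_2=\left(\tfrac{1}{2},1,\tfrac{1}{2}\right)$, whence $(n+1)a_2=4>3=n$: this intersection pattern both occurs for a smooth $C$ and violates the bound you are trying to prove. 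Hence your enumeration of admissible vectors $c$ cannot be completed as described, and no completion can recover the stated bounds --- they are not consequences of smoothness of $C$ alone (analogous curves meeting a node deep in the tail of a $D_n$ configuration break part (2) as well). The proposition as stated needs either an additional hypothesis pinning down how $C'$ meets the exceptional locus (which is what the paper's application actually supplies) or weaker upper bounds; only the Cartier/integrality half of the statement is genuinely established by your argument.
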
 
Notice that $P\in S$ cannot be of type $E_8$ because this singularity is factorial and hence there is no smooth curve passing through it.

The proof of this proposition is by a straightforward computation of the coefficients $a_i$ in $f^{\ast}C$ depending on the type of the singularity and the position of $C^{\prime}$ in the dual graph of the exceptional locus of the singularity and it is omitted. Similar computations can be found in~\cite[Proposition 4.5]{Tz03}.


\section{Set up and methodology of the proof of the main theorem.}\label{sec-2}

Let $X$ be a smooth canonically polarized surface defined over an algebraically closed field of characteristic $p>0$ with a nontrivial global vector field or equivalently with non smooth automorphism scheme. The main idea of the strategy for the proof of Theorems~\ref{intro-the-1},~\ref{intro-the-2} is to do one of the following:
\begin{enumerate}
\item Find an integral curve $C$ of $D$ on $X$ with the following properties: Its arithmetic genus $\mathrm{p}_a(C)$ is a function of $K_X^2$, $\mathrm{p}_a(\bar{C})\geq 1$, where $\bar{C}$ is the normalization of $C$, and such that $C$ contains some of the fixed points of $D$. Then by using the results of Section~\ref{sec-2}, if $\mathrm{p}_a(C)$ is small enough compared to the characteristic $p$, $D$ induces a vector field  on $C$ which lifts to $\bar{C}$. But this would be impossible since smooth curves of genus greater or equal than two have no nontrivial global vector fields and global vector fields on smooth elliptic curves do not have fixed points. This argument will allow us to conclude that if $K_X^2 < f(p)$, for some function $f(p)$ of $p$ then $X$ does not have any nontrivial global vector fields.
\item Find a positive dimensional family of integral curves $\{C_t\}$ of $D$ whose arithmetic genus is a function of $K_X^2$.  Then by using Tate's theorem~\cite{Sch09},~\cite{Ta52}  on the general member of the family, $(p-1)/2 <\mathrm{p}_a(C_t)$. This will make it  possible to get again a result as by the previous technique. 

\end{enumerate}

In order to achieve this, the following method will be used. It is based on a method initially used in~\cite{RS76} and then in~\cite{Tz17} but with different objectives.

Since $X$ has a nontrivial global vector field, then by~\cite[Proposition 4.1]{Tz17} $X$ has a nontrivial global vector field $D$ of either additive or multiplicative type which induces a  nontrivial $\alpha_p$ or 
$\mu_p$ action.  Let $\pi \colon X \rightarrow Y$ be the quotient. Then $Y$ is normal, $K_Y$ is $\mathbb{Q}$-Cartier and the local class groups of its singular points are p-torsion~\cite[Proposition 3.5]{Tz17b}. Consider now the following diagram

\begin{equation}\label{sec2-diagram-1}
\xymatrix{
     &       &    Y^{\prime}\ar[dl]_{g}\ar[dr]^{h} & \\
 X \ar[r]^{\pi} & Y &  & Z \\
}
\end{equation}
where $g \colon Y^{\prime} \rightarrow Y$ is the minimal resolution of $Y$ and  $\phi \colon Y^{\prime} \rightarrow  Z$ its minimal model.

Integral curves on $X$ will be found by choosing a suitable a reflexive sheaf $L$ on $Y$ such that either $\dim H^0(L)\geq 2$, in which case the pullbacks in $X$ of the divisors of $Y$ corresponding to the sections of $L$ will be integral curves of $D$, or $\dim H^0((\pi^{\ast}L) ^{[1]})\geq 2$ and then study the action of $D$ on $H^0((\pi^{\ast}L)^{[1]})$ exhibited in  Proposition~\ref{sec1-prop1}. The eigenvectors of this action will be  integral curves of $D$.

The proof of Theorems~\ref{intro-the-1},~\ref{intro-the-2} will distinguish cases with respect to whether $D$ has a divisorial part $\Delta$ or not and according to the Kodaira dimension $\kappa(Z)$ of $Z$. Then results from the classification of surfaces in positive characteristic will be heavily used~\cite{BM76},~\cite{BM77},~\cite{Ek88} and the geometry o $X$ and $Z$ will be compared by using diagram~\ref{sec2-diagram-1}. Moreover, since $\pi$ is a purely inseparable map, it induces an equivalence between the \'etale sites of $X$ and $Y$. Therefore $X$ and $Y$ have the same algebraic fundamental group, $l$-adic betti numbers and \'etale Euler characteristic. Then by using the fact that $g$ and $h$ are  birational it will be possible to calculate the algebraic fundamental group, $l$-adic Betti numbers and \'etale Euler characteristic of $X$ from those of $Z$.

Finally I collect some formulas and set up some terminology and notation that will be needed in the proof.

 Let $\Delta$ be the divisorial part of $D$. If $\Delta=0$ then we say that $\Delta$ has only isolated singularities. There is also the following  adjunction formula for purely inseparable maps~\cite[Corollary 1]{RS76} 
\begin{gather}\label{sec2-eq-2}
K_X=\pi^{\ast}K_Y+(p-1)\Delta.
\end{gather}

Let $F_i$, $i=1,\ldots, n$ be the $g$-exceptional curves and $E_j$, $j=1,\ldots,m$ be the $\phi$-exceptional curves. By~\cite[Lemma 5.1]{Tz17}, the $g$-exceptional curves $F_i$ are all rational (but perhaps singular). 

Taking into consideration  that  $g \colon Y^{\prime}\rightarrow Y$ is the minimal resolution of $Y$, we get the following adjunction formulas
\begin{gather}\label{sec2-eq-1}
K_{Y^{\prime}}+\sum_{i=1}^na_iF_i=g^{\ast}K_Y,\\
K_{Y^{\prime}}=h^{\ast}K_Z+\sum_{j=1}^mb_jE_j,\nonumber
\end{gather}
where $a_i \in\mathbb{Z}_{\geq 0}$, and $b_j>0$, $j=1,\ldots m$. Moreover since both $Y^{\prime}$ and $Z$ are smooth, $h$ is the composition of $m$ blow ups.

Note that the cases when $K_X^2\leq 2$ have been treated in ~\cite{Tz17}. Hence from now on it will be assumed that $K_X^2\geq 3$. Moreover, it will be assumed that $p\not=2,3$. For the purposes of this work this is not a serious restriction since general bounds of the type $K_X^2<f(p)$ are sought that guarantee the smoothness of $\mathrm{Aut}(X)$.

Finally, for the rest of the paper fix the notation of this section.


\section{Vector fields with only isolated singularities.}\label{sec-3}
Fix the notation as in Section~\ref{sec-2}. The main result of this section is the following.
\begin{theorem}\label{sec3-th-1}
Let $X$ be a smooth canonically polarized surface defined over an algebraically closed field of characteristic $p>0$. Suppose that $X$ admits a nontrivial global vector field $D$ such that $D^p=0$ or $D^p=D$. Assume moreover that $D$ has only isolated singularities. Then
\begin{enumerate}
\item Suppose that $D$ is of multiplicative type. Then if $K_X^2<(p-3)/144$, then $X$ is unirational, $b_1(X)=0$ and $|\pi_1(X)|\leq 2$.
\item Suppose that $D$ is of additive type. Then if $8(K_X^2)^3+4(K_X^2)^2<p-3$ and $K_X^2<\frac{p-3}{2\cdot 44 \cdot 45}$ then $X$ is unirational, $b_1(X)=0$ and $|\pi_1(X)|\leq 2$. In particular, this happens if 
\[
K_X^2 < \mathrm{min} \left\{\frac{1}{2}\sqrt[3]{\frac{p-3}{2}}, \frac{p-3}{2\cdot 44\cdot 45} \right\}.
\]
\end{enumerate} 
\end{theorem}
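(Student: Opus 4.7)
The plan is to transfer the problem to the minimal model $Z$ via diagram~\ref{sec2-diagram-1} and derive constraints strong enough to force $Z$ to be rational. Since $\Delta=0$, the adjunction formula~\ref{sec2-eq-2} simplifies to $K_X=\pi^\ast K_Y$, so $K_Y$ is ample with $pK_Y^2=K_X^2$. Because $\pi$ is purely inseparable, it is a universal homeomorphism, so $b_i(X)=b_i(Y)$ and $\pi_1^{et}(X)\cong\pi_1^{et}(Y)$; moreover, once $Z$ is known to be rational, $k(Y)=k(t_1,t_2)$ and therefore $k(X)\subseteq k(t_1^{1/p},t_2^{1/p})=k(\mathbb{P}^2)$, which gives the unirationality of $X$. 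The two main tasks are thus to show $Z$ is rational under the stated bounds, and to control $\pi_1^{et}(Y)$ using the fact that the local class groups of the singularities of $Y$ are $p$-torsion.

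The technical engine is to apply Proposition~\ref{sec1-prop1} to $L=\omega_Y^{[s]}$, so that $M=(\pi^\ast L)^{[1]}=\omega_X^{\otimes s}$, and consider the action of $D$ on $H^0(X,sK_X)$. In the multiplicative case $D^\ast$ is diagonalizable, so any non-zero section of $sK_X$ lies in an eigenline and hence produces an integral divisor $C\in|sK_X|$ of $D$; in the additive case $D^\ast$ is nilpotent and only the kernel $H^0(Y,\omega_Y^{[s]})\subset H^0(X,sK_X)$ yields eigenvectors, so one must take $s$ growing with $K_X^2$, which is precisely the source of the cubic bound. For $C=\sum n_iC_i$ so obtained, Corollary~\ref{sec1-cor1}, whose hypothesis $K_X\cdot C<p$ produces the linear-in-$p$ factor, gives $D(I_{C_i})\subset I_{C_i}$ for every component. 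The arithmetic genus of each $C_i$ is bounded by a quadratic function of $K_X^2$ and stays below $(p-1)/2$ under the hypotheses, so Proposition~\ref{sec1-prop4} and Corollary~\ref{sec1-cor2} force every singular $C_i$ to be rational with at most two fixed points of $D$. Proposition~\ref{sec1-prop3} together with Corollary~\ref{sec1-cor0} then force intersection points of distinct components to be fixed points, and Proposition~\ref{sec1-prop5} forces the resulting rational configuration in $|sK_X|$ to be connected. Running a case analysis on $\kappa(Z)$ and invoking the classification of surfaces in positive characteristic~\cite{BM76,BM77,Ek88}, one derives a contradiction in each case $\kappa(Z)=0,1,2$: the rigid rational configurations predicted above (with the multiplicity coefficients controlled via Propositions~\ref{sec1-prop-6} and~\ref{sec1-prop-7} when $C_i$ passes through singularities of $Y$) are incompatible with the canonical ring of $Z$ in those regimes. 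Hence $\kappa(Z)=-\infty$, and a refinement of the same analysis, used to bound $q(Z)=q(X)$, rules out irrational ruling and leaves $Z$ rational.

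For the topological conclusions, $\pi_1^{et}(Z)=1$ and $b_1(Z)=0$. The birational morphism $h\colon Y^\prime\to Z$ preserves both invariants. The contraction $g\colon Y^\prime\to Y$ can introduce only torsion contributions from the singularities of $Y$; the $p$-torsion local class group assumption kills the pro-$p$ part upon passing to the purely inseparable cover $\pi\colon X\to Y$, while the prime-to-$p$ part allows at most a $\mathbb{Z}/2$-contribution, giving $b_1(X)=0$ and $|\pi_1^{et}(X)|\leq 2$. The main obstacle is not any single step but the numerical bookkeeping across all cases: each $\kappa(Z)$ and each admissible singularity type on $Y$ contributes its own constraint on $K_X^2$, and one must optimize across all of them to reach the explicit thresholds. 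The asymmetry between the multiplicative bound $(p-3)/144$ and the additive bound involving $8(K_X^2)^3+4(K_X^2)^2<p-3$ is intrinsic to the difference between $\mu_p$ (diagonalizable, so eigenvectors are produced from any $H^0(sK_X)$) and $\alpha_p$ (nilpotent, so eigenvectors come only from the smaller subspace $H^0(Y,\omega_Y^{[s]})$ and require $s$ of order $K_X^2$ to be abundant enough).
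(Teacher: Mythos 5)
There is a genuine structural gap in your proposal: you claim that running the case analysis on $\kappa(Z)$ produces ``a contradiction in each case $\kappa(Z)=0,1,2$,'' so that $\kappa(Z)=-\infty$ and $Z$ is rational. That is not what happens, and it cannot be made to happen. The cases $\kappa(Z)=1,2$ are indeed excluded by the numerical hypotheses, but for $\kappa(Z)=0$ only the abelian and (quasi-)elliptic subcases are ruled out; the K3 and Enriques subcases survive and are precisely where the hardest work and the actual conclusions live. In the K3 case the argument does not reach a contradiction: one produces a positive-dimensional family of low-genus integral curves of $D$ (via $D^{\ast}$ acting on $H^0(8K_X)$, or on $H^0(\nu K_X)$ with $\nu\leq K_X^2$ in the additive case), applies Tate's genus-change theorem to the general member, and concludes that $X$ is \emph{uniruled}; unirationality then requires the nontrivial input that a uniruled K3 surface is unirational (\cite{Li15}, cf.~\cite[p.~395]{Hu16}), an ingredient absent from your outline. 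The Enriques case is handled by passing to the \'etale double cover $W\to X$ with $K_W^2=2K_X^2$ and reducing to the K3 case, which is exactly where the factors of $2$ in the thresholds $(p-3)/144=(p-3)/(2\cdot 72)$ and $(p-3)/(2\cdot 44\cdot 45)$ and the cubic condition $8(K_X^2)^3+4(K_X^2)^2<p-3$ come from; your proposal does not account for these factors.

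Relatedly, your explanation of $|\pi_1(X)|\leq 2$ is incorrect: the possible $\mathbb{Z}/2\mathbb{Z}$ does not arise from prime-to-$p$ torsion in the local class groups of the singularities of $Y$, but from $\pi_1(Z)=\mathbb{Z}/2\mathbb{Z}$ when $Z$ is an Enriques surface (and $p\neq 2$), transported to $X$ via $\pi_1(X)\cong\pi_1(Y)$ and the birational maps $g,h$. Indeed, if your scheme worked as stated ($Z$ always rational), you would conclude $\pi_1(X)=\{1\}$ unconditionally, which is strictly stronger than the theorem asserts --- a sign that the $\kappa(Z)=0$ case cannot be eliminated. A smaller slip: in the multiplicative case diagonalizability of $D^{\ast}$ gives a \emph{basis} of eigenvectors of $H^0(sK_X)$, not that every nonzero section is an eigenvector; the paper works with such a basis. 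Your identification of the source of the cubic bound in the additive case (needing a multiple $s$ of order $K_X^2$) is in the right spirit, but the actual mechanism is a bound $m_Q(\hat{F}_i)\leq K_X^2$ on multiplicities of images of $g$-exceptional curves in $Z$, which then feeds into the linear system $|(K_X^2)K_{Y^{\prime}}+F_i|$.
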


\begin{proof}

Suppose then that $D$ is a global vector field on a smooth canonically polarized surface $X$ with only isolated singularities, i.e., $\Delta=0$. Then from the equation~\ref{sec2-eq-2} it follows that 
$K_X=\pi^{\ast}K_Y$. 
 Therefore, since $K_X$ is ample, $K_Y$ is ample as well. In particular, if $K_X^2<p$, then $Y$ is singular. If that was not true then $K_X^2=pK_Y^2\geq p$.

Next consider cases with respect to the Kodaira dimension $\kappa(Z)$ of $Z$.

\subsection{Suppose that $\kappa(Z)=2$.} In this case I will show that  $K_X^2>(p-3)/20$.

According to~\cite[Theorem 1.20]{Ek88}, the linear system $|4K_Z|$ is very ample. Let $W\in|4K_Z|$ a smooth and connected member which does not go through the points blown up by $h$ in the diagram~\ref{sec2-diagram-1}. Then by the adjunction formula, $p_a(W)=10K_Z^2+1$. Then combining the equations~\ref{sec2-eq-1} it follows that
\begin{gather}\label{sec3-eq-1}
g^{\ast}(4K_Y)=4K_{Y^{\prime}}+4\sum_{i=1}^na_iF_i=h^{\ast}(4K_Z)+4\sum_{j=1}^mb_jE_j+4\sum_{i=1}^na_iF_i\sim \\
W^{\prime}+4\sum_{j=1}^mb_jE_j+4\sum_{i=1}^na_iF_i,
\end{gather}
where $W^{\prime}=h^{\ast}W=h_{\ast}^{-1}W$ is the birational transform of $W$ in $Y^{\prime}$. By pushing down to $Y$ we get that 
\begin{gather}\label{sec3-eq-2}
4K_Y\sim \tilde{W}+4\sum_{i=1}^m b_i\tilde{E}_i,
\end{gather}
where $\tilde{F}_i=g_{\ast}F_i$, $1\leq i \leq m$. Note that since $Y^{\prime}$ is the minimal resolution of $Y$, $g$ does not contract the $-1$ $h$-exceptional curves. Hence if $h$ is not an isomorphism then $g_{\ast}\sum_{i=1}^mE_i \not= 0$. 

Let $\hat{W}=\pi^{\ast}\tilde{W}$ and $\hat{E}_i=\pi^{\ast}\tilde{E}_i$, $1\leq i \leq m$. Then since $K_X=\pi^{\ast}K_Y$ it follows from the equation~\ref{sec3-eq-2} that
\begin{gather}\label{sec3-eq-3}
4K_X\sim \hat{W} +4\sum_{i=1}^m b_i\hat{E}_i.
\end{gather}
Suppose that $\hat{W}= \pi^{\ast}\tilde{W}$ is not irreducible. Then by~\cite{RS76}, it must be that $\pi^{\ast}\tilde{W}=p\bar{W}$, and $\bar{W}\rightarrow \tilde{W}$ birational. But then in this case, since $K_X$ is ample, it follows from  the equation~\ref{sec3-eq-3} that $K_X^2>p/4>(p-3)/20$, as claimed.

Suppose that $\hat{W}$ is irreducible. This means that $\hat{W}$ is an integral curve of $D$ and that $\hat{W}\rightarrow \tilde{W}$ is purely inseparable of degree $p$. In fact, $D$ induces a vector field on $\hat{W}$ which is nontrivial since $D$ has only isolated singularities. Then $\tilde{W}$ is the quotient of 
$\hat{W}$ by the induced $\alpha_p$ or $\mu_p$ action on $\hat{W}$. Let $\mu\colon \bar{W} \rightarrow \hat{W}$ be the normalization of $\hat{W}$.

\textbf{Claim:} $\bar{W} \cong W$ and therefore $\mathrm{p}_a(\bar{W})=10K_Z^2+1\geq 11$. 

Let $F\colon \hat{W} \rightarrow \hat{W}^{(p)}$ be the geometric Frobenius. Then there exists a factorization
\[
\xymatrix{
\hat{W}\ar[rr]^{F} \ar[dr]^{\pi} & & \hat{W}^{(p)}\\
&      \tilde{W} \ar[ur]^{\delta} &
}
\]
Since $F$ and $\pi$ are purely inseparable of degree $p$, it follows that $\delta $ is birational. Now $g \colon W^{\prime} \rightarrow \tilde{W}$ is also birational and $W^{\prime}\cong {W}$ is smooth. Hence $W$ is isomorphic to the normalization of $\hat{W}^{(p)}$. Then by the properties of geometric Frobenius, there exists a commutative diagram
\[
\xymatrix{
\bar{W}\ar[d]^{F^{(p)}}\ar[r]^{\mu} & \hat{W} \ar[d]^{F^{(p)}}\\
\bar{W}^{(p)} \ar[r]^{\sigma} & \hat{W}^{(p)}
}
\]
From the above diagram it follows that $\sigma$ is birational. Therefore $\bar{W}^{(p)}$ is the normalization of $ \hat{W}^{(p)}$. Hence 
$\bar{W}^{(p)}\cong W$ and therefore
\[
\mathrm{p}_a(\hat{W}) \geq \mathrm{p}_a(\bar{W})=\mathrm{p}_a(\bar{W}^{(p)})=\mathrm{p}_a(W) =10K_Z^2+1\geq 11,
\]
as claimed.
 
Now from the equation~\ref{sec3-eq-3} it follows that $K_X\cdot \hat{W} < 4K_X^2$. Then, since $K_X$ is ample, it follows from the Hodge index theorem that 
\[
\hat{W}^2K_X^2 < (K_X\cdot W)^2< 16 (K_X^2)^2
\]
and therefore $\hat{W}^2<16K_X^2$. Hence from the adjunction formula we get that
\[
11 \leq \mathrm{p}_a(\hat{W}) \leq 10K_X^2+1.
 \]
Then by Proposition~\ref{sec1-prop4} it follows that if $10K_X^2+1 <(p-1)/2$, or equivalently if $K_X^2<(p-3)/20$, $D$ fixes the singular points of $\hat{W}$ and therefore its restriction on $\hat{W}$ lifts to the normalization $\bar{W}$ of $\hat{W}$. But since $p_a(\bar{W})\geq 2$, $\bar{W}$ does not have any nontrivial global vector fields. But this implies that the restriction of $D$ on $\hat{W}$ is zero, which is impossible since $D$ has only isolated singularities. Hence we must have $D=0$ unless $K_X^2>(p-3)/20$.

\subsection{Suppose that $\kappa(Z)=1$.} In this case I will show that $K_X^2>(p-3)/42$.

Since $\kappa(Z)=1$, it is well known that $Z$ admits an elliptic fibration $\phi \colon Z \rightarrow B$, where $B$ is a smooth curve. Then one can write
\begin{gather}\label{sec3-eq-4}
R^1\phi_{\ast}\mathcal{O}_Z=L\oplus T,
\end{gather}
where $L$ is an invertible sheaf on $B$ and $T$ is a torsion sheaf. 

\textbf{Claim:} $B\cong \mathbb{P}^1$. Moreover, if $K_X^2< p/14$, then $T=0$. 

The $g$-exceptional curves are rational. Hence if at least one of them is not contracted to a point by $\phi \circ h$, then $B$ is dominated by a rational curve and  hence it is isomorphic to $\mathbb{P}^1$. Suppose that every $g$-exceptional curve is contracted to a point by $\phi \circ h$. Then by looking at diagram~\ref{sec2-diagram-1} we see that there exists factorizations
\[
\xymatrix{
    & Y \ar[dr]^{\psi} & \\
    X \ar[ur]^{\pi} \ar[rr]^{\sigma} & & B
    }
\]
such that the general fiber of $\psi$ is an elliptic curve. Then let $Y_b=\psi^{-1}(b)$ be the general fiber. Then $K_Y\cdot Y_b=0$ and therefore, 
\[
K_X\cdot \pi^{\ast} Y_b=\pi^{\ast}K_Y \cdot \pi^{\ast}Y_b=p K_Y \cdot Y_b=0.
\]
But this is impossible since $K_X$ is ample. Therefore there must be a $g$-exceptional curve not contracted to a point by $\phi\circ h$ and hence $B\cong \mathbb{P}^1$.

Suppose now that $T\not= 0$. Let  $b \in T$. Then $\phi^{-1}(b)=pmW$, $m>0$ and $W$ is an idecomposable fiber~\cite{KU85}. Moreover $|14KZ|$ defines the fibration $\phi$~\cite{KU85}. Hence $14K_Z \sim \nu F$, where $F$ is a general fiber of $\phi$ and hence a smooth elliptic curve (if $p\not= 2,3$.).  Then 
$F\sim \phi^{-1}(b)=pmW$. and hence $14K_Z\sim pm\nu W$. Then by pulling up to $Y^{\prime}$ it follows that
\[
14h^{\ast}K_Z=pm\nu W^{\prime} +p(\sum_{i=1}^m c_i E_i).
\]
If $h$ blows up a point of $W$ then $c_i>0$ and $14h^{\ast}K_Z$ has a component corresponding to a $-1$ $h$-exceptional curve with coefficient divisible by $p$. Considering that the $-1$ $h$-exceptional curves do not contract by $g$, we see that in any case (if $h$ blows up a point on $W$ or not) that, after pushing down to $Y$, $14K_Y \sim p \tilde{W} + B $, for some divisor $\tilde{W}$ (either the birational transform of $W$ or the image of a $-1$ $h$-exceptional curve.  Therefore by pulling up to $X$ and since $K_X=\pi^{\ast} K_Y$,
\[
14K_X \sim p\pi^{\ast}\tilde{W} +\pi^{\ast}B.
\]
But from this it follows that $K_X^2>p/14$, a contradiction. This concludes the proof of the claim.

Next consider cases with respect to $p_g(Z)$.

\textbf{Case 1.} Suppose that $p_g(Z)\geq 2$. In this case I will show that $K_X^2\geq (p-3)/2$.

Since $\dim H^0(\mathcal{O}_Z(K_Z))\geq 2$, it follows that $K_Z \sim mF$, where $m>0$ is a positive integer and $F$ is a general fiber of $\phi$ (note that since $K_Z\cdot F=0$, for any fiber of $\phi$, any member of $|K_Z|$ is supported on a fiber of $\phi$). Since it is assumed that $p \not= 2,3$, $F$ is a smooth elliptic curve. Then by pulling up to $Y^{\prime}$ and pushing down to $Y$ it follows that 
\begin{gather}\label{sec3-eq-5}
K_Y=m\tilde{F} +B,
\end{gather}
where $\tilde{F}$ is the birational transform of $F$ on $Y$ and $B$ some effective divisor. Let $\hat{F}=\pi^{\ast}\tilde{F}$. For the same reasons as in the proof of the case when $\kappa(Z)=2$, we see that $\hat{F}$ is irreducible and its normalization is isomorphic to $F$ and hence it is a smooth elliptic curve. Hence $\hat{F}$ is an integral curve of $D$ and therefore $D$ induces a  nontrivial (since $D$ has only isolated singularities) vector field $\hat{D}$ on $\hat{W}$. Now pulling back the equation~\ref{sec3-eq-5} to $X$ we get that 
\[
K_X=m\hat{F}+\pi^{\ast}B.
\]
From this and the Hodge index theorem it immediately follows that $K_X\cdot \hat{F} \leq K_X^2$ and $\hat{F}^2 < K_X^2$. 
Hence $\mathrm{p}_a(\hat{F})<K_X^2+1$. Then if $K_X^2+1 <(p-1)/2$, equivalently if $K_X^2<(p-3)/2$, then $\mathrm{p}_a(\hat{F})< (p-1)/2$ and therefore from Proposition~\ref{sec1-prop4} that $\hat{D}$ fixes the singular points (if any)  of $\hat{F}$ and therefore it lifts to a vector field $\bar{D}$ of its normalization $\bar{F}$. However, since there exists at least one $g$-exceptional curve dominating $\mathbb{P}^1$. it easily follows that $\tilde{F}$ goes through at least one singular point of $Y$ and therefore $D$ has at least one fixed point on $\hat{F}$. But then $\bar{D}$ has a fixed point, which is impossible since $\bar{E}$ is a smooth elliptic curve.

\textbf{Case 2.} Suppose that $\mathrm{p}_g(Z) \leq 1$. In this case I will show that $K_Z^2\geq (p-3)/42$.

From the Noether's formula on $Z$~\cite[Theorem 5.1]{Ba01}
\begin{gather}\label{sec3-eq-6}
10-8h^1(\mathcal{O}_Z)+12p_g(Z)=K_Z^2+b_2(Z)+2(2h^1(\mathcal{O}_Z)-b_1(Z))=\\
b_2(Z)+2(2h^1(\mathcal{O}_Z)-b_1(Z))\nonumber
\end{gather}
it easily follows~\cite[Page 113]{Ba01} that if $p_g(Z) \leq 1$, then the only numerical solutions to the equation~\ref{sec3-eq-6} are the following:
\begin{enumerate}
\item $p_g(Z)=0$, $\chi(\mathcal{O}_Z)=0$, $b_1(Z)=2$.
\item $p_g(Z)=0$, $\chi(\mathcal{O}_Z)=1$, $b_1(Z)=0$.
\item $p_g(Z)=1$, $\chi(\mathcal{O}_Z)=2$, $b_1(Z)=0$.
\item $p_g(Z)=1$, $\chi(\mathcal{O}_Z)=1$, $b_1(Z)=2$.
\item $p_g(Z)=1$, $\chi(\mathcal{O}_Z)=1$, $b_1(Z)=0$.
\item $p_g(Z)=1$, $\chi(\mathcal{O}_Z)=0$, $b_1(Z)=2$.
\item $p_g(Z)=1$, $\chi(\mathcal{O}_Z)=0$, $b_1(Z)=4$.
\end{enumerate}
Note that by~\cite[Lemma 3.5]{KU85} the last case is not possible. Consider next each one of the cases separately. I will only consider the first two cases. The rest are similar and are omitted.

\textbf{Case 2.1.} Suppose that $p_g(Z)=\chi(\mathcal{O}_Z)=0$ and $b_1(Z)=2$.  In this case I will show that $K_X^2\geq (p-3)/42$.

By Igusa's formula~\cite{IG60} it follows that the fibers of $\phi \colon Z \rightarrow \mathbb{P}^1$ are either smooth elliptic curves or of  type $mE$, where $m$ is a positive integer and $E$ an elliptic curve (singular or smooth). Also note that $\phi$ must have multiple fibers or else $Z$ cannot have Kodaira dimension 1. 

I will next show that in fact $E$ is a smooth elliptic curve. Indeed. Since $b_1(Z)=2$ it follows that $\dim \mathrm{Alb}(Z)=1$. Hence $\mathrm{Alb}(Z)$ is a smooth elliptic curve. Let then $\psi \colon Z \rightarrow \mathrm{Alb}(Z)$ be the Albanese map. Then there exist the following two maps
\[
\xymatrix{
Z\ar[r]^{\psi}\ar[d]^{\phi} & \mathrm{Alb}(Z)\\
\mathbb{P}^1 & \\
}
\]
Suppose that $mE$ is a multiple fiber of $\phi$. Suppose also that $E$ is a rational elliptic curve. Then $E$ cannot dominate $\mathrm{Alb}(Z)$ and hence it must contract by $\psi$. Hence all fibers of $\phi$ contract by $\psi$. But then there would be a nontrivial map $\mathbb{P}^1 \rightarrow \mathrm{Alb}(Z)$, which is impossible. Hence $E$ is a smooth elliptic curve. 

It is well known~\cite[Theorem 8.11]{Ba01} that the linear system $\nu| K_Z|$, $\nu\in \{4,6\}$ contains a strictly positive divisor. Then $\nu K_Z\sim sE$, where $s>0$ is a positive integer and $E$ is a smooth elliptic curve. Let $E^{\prime}=h_{\ast}^{-1}E$ be the birational transform of $E$ in $Y^{\prime}$. Then $E^{\prime}$ is a smooth elliptic curve and since the $g$-exceptional curves are all rational, it follows that $E^{\prime}$ does not contract by $g$. Therefore by pulling up to $Y^{\prime}$ and then pushing down to $Y$ we get that 
\[
\nu K_Y \sim m\tilde{E} +B,
\]
where $B$ is an effective divisor on $Y$. Hence by pulling up to $X$ we get that
\begin{gather}\label{sec3-eq-7}
\nu K_X \sim m\hat{E} +\pi^{\ast} B.
\end{gather}
As in the previous cases we see that if $K_X^2< p/\nu$, $\hat{E}$ is irreducible and therefore is an integral curve of $D$ whose normalization $\bar{D}$ is a smooth elliptic curve. Now from the equation~\ref{sec3-eq-7} and by using the Hodge index theorem we get that $K_X \cdot \hat{E} < \nu K_X^2$ and that 
$\hat{E}^2<\nu^2 K_X^2$. Therefore from the adjunction formula it follows that
\[
\mathrm{p}_a(\hat{E}) < \frac{{\nu}(\nu+1)}{2}K_X^2 +1.
\]
 Hence if 
 \[
 K_X^2< \frac{p-3}{2}\cdot \frac{2}{\nu(\nu+1)},
 \]
 then $\mathrm{p}_a(\hat{E}) <(p-1)/2$. Considering that $\nu \in\{4,6\}$, the above inequality holds if $K_X^2<(p-3)/42$. Therefore in this case, from Proposition~\ref{sec1-prop4} it follows that 
 the restriction $\hat{D}$ of $D$ on $\hat{E}$ fixes the singular points of $\hat{E}$ and hence lifts to its normalization $\bar{E}$. However, for the same reasons as before, $\hat{E}$ contains some fixed points of
  $D$. Therefore the lifting of $D$ on $\bar{E}$ has fixed points, which is a contradiction.

\textbf{Case 2.2.} Suppose that $p_g(Z)=0$, $\chi(\mathcal{O}_Z)=1$, $b_1(Z)=0$. In this case I will show that $K_X^2\geq (p-3)/42$

\textbf{Claim:} $\dim |6K_Z| \geq 1$.

Let $F_{t_i}=m_iP_i$, $t_i\in \mathbb{P}^1$, $i=1,\ldots , r$ be the multiple fibers of $\phi$. Since $T=0$, they are all tame. Then by the canonical bundle 
formula~\cite[Theorem 7.15 and Page 118]{Ba01} we get that
\begin{gather}\label{sec3-eq-8}
\dim |nK_Z|=n(-2+\chi(\mathcal{O}_Z))+\sum_{i=1}^r \left[ \frac{n(m_i-1)}{m_i}\right]=-n+\sum_{i=1}^r \left[ \frac{n(m_i-1)}{m_i}\right],
\end{gather}
where for any $m\in \mathbb{N}$, $[m]$ denotes its integer part. Also, in the notation~\cite[Remark 8.3]{Ba01} if, 
\[
\lambda(\phi)=-1+\sum_{i=1}^r\frac{m_i-1}{m_i},
\]
Then $\kappa(Z)=1$ if and only if $\lambda(\phi)>0$. Hence $\phi$ has at least two multiple fibers.

Suppose that $\phi$ has at least three multiple fibers, i.e., $r\geq 3$ and $m_i\geq 2$. Then for every $1 \leq i \leq r$, 
\[
\left[ 6(1-\frac{1}{m_i})\right] \geq \left[\frac{6}{2}\right] =3.
\]
Then from the equation~\ref{sec3-eq-8} it follows that $\dim|6K_Z| \geq -6+3\cdot 3=3$.

Suppose that $\phi$ has exactly two multiple fibers with multiplicities $m_1$ and $m_2$. Then in order to have $\lambda(\phi)>0$, at least one of them must be greater or equal than $3$. Say $m_1\geq 3$ and $m_2\geq 2$. Then from the equation~\ref{sec3-eq-8} it follows that
\[
\dim|6K_Z| =-6+\left[6(1-\frac{1}{m_1}\right] +\left[6(1-\frac{1}{m_2}\right] \geq -6+\left[ 6\cdot \frac{2}{3}\right]+\left[ 6\cdot \frac{1}{2}\right]=1.
\]
Hence $6K_Z\sim mE$, where $m>0$ is a positive integer and $E$ is a smooth elliptic curve. By repeating now the argument used in Case 2.1 we see that this is impossible if $K_X^2<(p-3)/42$. This concludes the study of the case when $\kappa(Z)=1$.

\subsection{Suppose that $\kappa(Z)=0$.} In this case I will show that:
\begin{enumerate}
\item Suppose that $D$ is of multiplicative type. Then if $K_X^2<(p-3)/144$, then $X$ is unirational, $b_1(X)=0$ and $|\pi_1(X)|\leq 2$.
\item Suppose that $D$ is of additive type. Then if $8(K_X^2)^3+4(K_X^2)^2<p-3$ and $K_X^2<\frac{p-3}{2\cdot 44 \cdot 45}$ then $X$ is unirational, $b_1(X)=0$ and $|\pi_1(X)|\leq 2$. In particular, this happens if 
\[
K_X^2 < \mathrm{min} \left\{\frac{1}{2}\sqrt[3]{\frac{p-3}{2}}, \frac{p-3}{2\cdot 44\cdot 45} \right\}.
\]
\end{enumerate}
According to the classification of surfaces~\cite{BM76},~\cite{BM77}, $Z$ is one of the following: An abelian surface, a K3 surface, an Enriques surface, an elliptic or quasi-elliptic surface.

\textbf{Case 1.} Suppose that $Z$ is an abelian surface. Then every $g$-exceptional curve is also $h$-exceptional since every $g$-exceptional curve is rational and there do not exist nontrivial maps from a rational curve to an abelian surface. Hence there exists a factorization
\begin{gather}\label{sec3-eq-9}
\xymatrix{
Y^{\prime}\ar[r]^g\ar[d]_{\phi} & Y \ar[dl]^{\theta}\\
Z & \\
}
\end{gather}
Let $B_j$, $j=1,\ldots, r$ be the $\theta$-exceptional curves. Then one can write 
\[
K_Y=\theta^{\ast}K_Z +\sum_{j=1}^r\gamma_j B_j.
\]
Now from the diagram~\ref{sec3-eq-9} and the equations~\ref{sec2-eq-1}, one can easily see that $\gamma_j \geq 0$, for all $1\leq j \leq r$. But then, since $\{B_j, \; 1\leq j \leq r\}$ is a contractible set of curves, it easily follows that 
\[
K_Y^2=\left(\sum_{j=1}^r\gamma_j B_j\right)^2\leq 0,
\]
which is impossible since $K_Y$ is ample. Therefore $Z$ cannot be an abelian surface.

\textbf{Case 2.} Suppose that $Z$ is an elliptic or quasi-elliptic surface. I will show that this case is also impossible. It is well known that if $Z$ is quasi-elliptic,  then $b_1(Z)=2$~\cite{BM77} and hence $\dim \mathrm{Alb}(Z)=1$. Then the morphism $\phi \colon Z\rightarrow \mathrm{Alb}(Z)$ is an elliptic fibration~\cite{BM77}. Since every $g$-exceptional curve is rational, they must be contracted to points in $\mathrm{Alb}(Z)$. Hence there exists a factorization
\[
\xymatrix{
 & Y \ar[dr]^{\tilde{\phi}} & \\
 X \ar[ur]^{\pi}\ar[rr]^{\psi} & & \mathrm{Alb}(Z)
 }
 \]
 The general fiber $Y_b$ of $\tilde{\psi}$ is a smooth elliptic curve. Hence $K_Y \cdot Y_b=0$. hence
 \[
 K_X\cdot \pi^{\ast}Y_b=\pi^{\ast}K_Y\cdot \pi^{\ast}Y_b=pK_Y\cdot Y_b=0,
 \]
 which is impossible since $K_X$ is ample. Hence $Z$ can be either a K3 surface or an Enriques surface.
 
 \textbf{Case 3.} Suppose that $Z$ is a $K3$ surface. Consider now two cases with respect to whether $D$ is of multiplicative or additive type.
 
 \textbf{Case 3.1.} Suppose that $D$ is of multiplicative type, i.e., $D^p=D$. In this case I will show that if $K_X^2<(p-3)/72$, then $X$ is unirational and simply connected.
 
 By~\cite[Corollary 1.8]{Ek88}, it follows that $\dim|2K_X|\geq 2$. Moreover from~\cite[Theorem 1.20]{Ek88}, $|3K_X|$ is base point free. Also, since $K_X=\pi^{\ast}K_Y$,  by Proposition~\ref{sec1-prop1}, there exists $k$-linear maps
 \begin{gather}\label{sec3-eq-10}
 D_2^{\ast} \colon H^0(\mathcal{O}_X(2K_X) )\rightarrow H^0(\mathcal{O}_X(2K_X)),\\
  D_3^{\ast} \colon H^0(\mathcal{O}_X(3K_X)) \rightarrow H^0(\mathcal{O}_X(3K_X)).\nonumber
\end{gather}
Moreover, since $D^p=D$, both maps are diagonalizable (with eigenvalues in the set $\{0,1,\ldots, p-1\}$) and their eigenvectors correspond to integral curves of $D$. Let 
\begin{gather}\label{section3-eq-11}
 H^0(\mathcal{O}_X(3K_X))=\oplus_{i=1}^{k} V(\lambda_i),
 \end{gather}
 the decomposition of $H^0(\mathcal{O}_X(3K_X))$ in eigenspaces of $D_3^{\ast}$, where $\lambda_i \in\{0,1,\ldots, p-1\}$, $1\leq i \leq k$.
 
 Suppose that $\dim |3K_X|=m$. Let  $Z_i$, $i=1,\ldots, m$ be a basis of $|3K_Z|$ consisting of eigenvectors of $D_3^{\ast}$. Then since $Z_i$ are eigenvectors of $D_3^{\ast}$, $D$ induces nontrivial vector fields on each $Z_i$. Moreover, from 
 Corollary~\ref{sec1-cor1}, if $K_X^2<p/3$, then $D$ restricts to every reduced and irreducible component of $Z_i$, for all $1\leq i \leq m$.
 
 Since $Z$ is a $K3$ surface, $\omega_Z\cong \mathcal{O}_Z$. Hence from the equations~\ref{sec2-eq-1} it follows that 
 \begin{gather}\label{sec3-eq-12}
 K_Y=\sum_{j=1}^sb_j \tilde{E}_j,
 \end{gather}
 where $\tilde{E}_j$ is the birational transform in $Y$ of the $h$-exceptional curves not contracted by $g$. In particular $p_g(Y)\not=0$ and hence 
 $p_g(X)\not=0$. 
 Let $C=\pi^{\ast}\tilde{E}$, where $\tilde{E}$ is any irreducible component of $K_Y$ in the analysis~\ref{sec3-eq-12}. Then if $K_X^2<p$, $C$ is irreducible and hence is an integral curve of $D$. Moreover 
 \[
 K_X\cdot C=\pi^{\ast}K_Y \cdot \pi^{\ast} \tilde{E} =pK_Y\cdot \tilde{E}\leq pK_Y^2=K_X^2.
 \]
 Moreover, from the Hodge index theorem, $C^2\leq K_X^2$ as well and therefore $p_a(C)\leq K_X^2+1$. Now from Corollary~\ref{sec1-cor2} it follows that if 
 $K_X^2+1< (p-1)/2$, equivalently $K_X^2<(p-3)/2$, $D$ fixes the singular point of $C$ and lifts to its normalization. Therefore $C$ is a rational curve and $D$ has exactly two fixed points on $D$ (with the possibility to be infinitely near points). Let $P_1, P_2$  be the fixed points of $D$ on $C$.
 
Let $1\leq i \leq m$ be such that $C$ is not an irreducible component of $Z_i$.  Since $K_X$ is ample, it follows that $C\cdot Z_i>0$. For the same reason, 
$Z_i \cdot Z_j >0$ and therefore $Z_i \cap Z_j \not=\emptyset, $ for all $1\leq i,j \leq m$. Let now again $i$ be such that $C$ is not an irreducible component of $Z_i$. Let $A$ be an irreducible and reduced  component of $Z_i$. Then from the definition of $C$ and $Z_i$, it follows that \[
C \cdot A \leq C\cdot Z_i =3K_X \cdot C \leq 3K_X^2.
\]
Hence by Corollary~\ref{sec1-cor0}, if $K_X^2<p/3$, then every point of intersection of $A$ and $C$ is a fixed point of $D$. In particular, every point of intersection of $C$ and $Z_i$ is a fixed point of $D$ (in the case $C$ is not a component of $Z_i$).
 
 Suppose that $P_1=P_2$. Let $1\leq i \leq m$. Then either $C$ is a component of $Z_i$ or $(Z_i \cap C)_{\mathrm{red}} =\{P_1\}$, for all $1\leq i\leq m$. But this implies that $P_1$ is a base point of $|3K_X|$, which is impossible. Hence $P_1\not= P_2$. For the same reason, it is not possible that either 
 $(Z_i \cap C)_{\mathrm{red}} =\{P_1\}$, for all $i$ or  $(Z_i \cap C)_{\mathrm{red}} =\{P_2\}$, for all $i$. Therefore there exist indices $1\leq i\not= j \leq m$, such that $(Z_i\cap C)_{\mathrm{red}} =\{P_1\}$ and 
 $(Z_j \cap C)_{\mathrm{red}}=\{P_2\}$. But then, since $Z_i \cap Z_j \not= \emptyset$,  the curve $W=Z_i +Z_j + C$ contains loops. Let $\tilde{Z}_i=\pi(Z_i)$, 
 $\tilde{Z}_j=\pi(Z_j)$. Then $\tilde{W}=\tilde{Z}_i\ + \tilde{Z}_i\ + \tilde{E}$ is a curve whose reduced curve $\tilde{W}_{\mathrm{red}}$ contains loops. Hence 
 $\dim H^1(\mathcal{O}_{\tilde{W}_{\mathrm{red}}})\geq 1$ and hence  $\dim H^1(\mathcal{O}_{\tilde{W}_{\mathrm{red}}})\geq 1$ as well.
  
 Now since $Z$ is a $K3$ surface, it follows that $H^1(\mathcal{O}_Z)=0$ Hence $H^1(\mathcal{O}_{Y^{\prime}})=0$ and therefore from the Leray spectral sequence it follows that $H^1(\mathcal{O}_Y)=0$. Then from the exact sequence
 \[
 0 \rightarrow \mathcal{O}_Y(-\tilde{W}) \rightarrow \mathcal{O}_Y \rightarrow \mathcal{O}_{\tilde{W}} \rightarrow 0
 \]
 we get the exact sequence in cohomology
 \[
 \cdots 0=H^1(\mathcal{O}_Y) \rightarrow H^1(\mathcal{O}_{\tilde{W}}) \rightarrow H^2(\mathcal{O}_Y(-\tilde{W}) ) \rightarrow H^2(\mathcal{O}_Y) \rightarrow 
 H^2(\mathcal{O}_{\tilde{W}} )=0.
 \]
 Considering now that $h^1(\mathcal{O}_{\tilde{W}})\geq 1$,  $H^2(\mathcal{O}_Y(-\tilde{W}) )=H^0(\mathcal{O}_Y(\tilde{W}+K_Y))$, $H^2(\mathcal{O}_Y)=H^0(\mathcal{O}_Y(K_Y))$  and that $p_g(Y)\not= 0$, it follows that 
 \begin{gather}\label{sec3-eq-13}
 \dim H^0(\mathcal{O}_Y(\tilde{W}+K_Y))\geq 2.
 \end{gather}
 Now $\pi^{\ast}(\tilde{W}+K_Y)=W+K_Y\sim 8K_X$. Let 
 \[
 D_8^{\ast} \colon H^0(\mathcal{O}_X(8K_X)) \rightarrow  H^0(\mathcal{O}_X(8K_X))
 \]
 the $k$-linear map from Proposition~\ref{sec1-prop1}. The equation~\ref{sec3-eq-13}  says that $\dim V(0)\geq 2$, where $V(0)$ is the eigenspace of $0$ $D_8^{\ast}$. Therefore there exists a family of integral curves in $|8K_X|$ of dimension at least 1. Let $B$ be the general member of the family. This is defined over a non algebraically closed field $L$. Then from the adjunction formula it follows that 
 $\mathrm{p}_a(B)=36K_X^2+1$. Then by Propositions~\ref{sec1-prop2},~\ref{sec1-prop3},~\ref{sec1-prop4}, if $K_X^2<(p-3)/72$, then $\mathrm{p}_a(B) <(p-1)/2$ and hence $D$ restricts to the irreducible components of $B$ and also fixes their singular points. Notice also that  if $B=\pi^{\ast}\tilde{B}$, where $\tilde{B}\in|\tilde{W}+K_Y|$, then if $K_X^2<p/8$, $\tilde{B}$ goes through the singular points of $Y$ because if not then 
 from the equation $K_X=\pi^{\ast}K_Y$ follows that $8K_X^2=K_X \cdot B=\pi^{\ast}K_Y \cdot \pi^{\ast} \tilde{B}=p K_Y \cdot \tilde{B} \geq p$, if $\tilde{B}$ was contained in the smooth locus of $Y$. Hence if $K_X^2<p/8$, then $B$ contains fixed points of $D$.

 Consider cases with respect to the possibilities of $B$.
 
 \textit{Case 1.} Suppose that $B=sB^{\prime}$, where $B^{\prime}$ is an integral curve. 
 
 Suppose that $B^{\prime}$ is smooth curve over $L$. Then $\mathrm{p}_a(B^{\prime})\geq 2$ and hence the restriction of $D$ on $B^{\prime}$ must be zero. But this is not possible since $B$ is general and in fact $D$ does not have a divisorial part. 
 
 Suppose that $B^{\prime}$ is not smooth. Let $\bar{B}$ be its normalization over $L$. Then $D$ lifts to a vector field $\bar{D}$ on $\bar{B}$. Suppose that 
 $\bar{B}$ is regular but not smooth. Let then $L\subset M$ be an extension of $L$ such that $\bar{B}\otimes_L M$ is not regular and let $\hat{B}$ be the normalization of $\bar{B}\otimes_L M$ in $M$. Then by Tate's theorem, $(p-1)/2$ divides $\mathrm{p}_a(\bar{B})-\mathrm{p}_a(\hat{B})$. This will not be possible if we demand that $K_X^2<(p-3)/72$, which implies that $\mathrm{p}_a(B)<(p-1)/2$. Suppose then that $\bar{B}$ is smooth. Then since it has a global vector field with fixed points (the preimages of the singular points of $B$), $\bar{B}=\mathbb{P}^1_L$. Now straightforward arguments easily show that $X$ is uniruled. Therefore, from the diagram~\ref{sec2-diagram-1} it follows that $Z$ is a uniruled $K3$ surface. But it is known~\cite[Page 395]{Hu16} that every uniruled $K3$ surface is in fact unirational. Moreover since $\pi_1(Z)=\{1\}$, it follows that $\pi_1(X)=\{1\}$. 
 
 To summarize, it has been shown that in this case, if $K_X^2<(p-3)/72$ then $X$ is unirational and simply connected.

\textit{Case 2.} Suppose that $B$ is not irreducible. Let then $B=n_1B_1+\cdots +n_kB_k$, $k\geq 2$, its decomposition into prime divisors. Then by repeating the arguments given in   Case 1 on the irreducible components $B_i$ of $B$ we conclude again that if $K_X^2<(p-3)/72$, then $X$ is simply connected and unirational.

 \textbf{Case 3.2.} Suppose that $D$ is of additive type, i.e., $D^p=0$. In this case I will show that if $(K_X^2)^3+(K_X^2)^2<p-3$ and $K_X^2<\frac{p-3}{44\cdot 45}$, then $X$ is unirational and simply connected. In particular, this happens if 
 \[
 K_X^2< \mathrm{min}
 \left\{
 \frac{p-3}{44\cdot 45}, \sqrt[3]{\frac{p-3}{2}}
 \right\}.
 \]
 
 The main idea for proving this is the following. I will show that there exists a "small" positive number $\nu$ such that $\dim|\nu K_Y| \geq 1$. Therefore, if $V(0)$ is the eigenspace of 0 of $D_{\nu}^{\ast} \colon H^0(\omega^{\nu}_X)\rightarrow H^0(\omega_X^{\nu})$, then $\dim V(0)\geq 2$ and therefore there exists a one dimensional family of integral curves of $D$ of small genus. Then the result will follow by applying Tate's theorem~\cite{Sch09},~\cite{Ta52} on the general member of $V(0)$, or  its irreducible components if it is reducible.
 
 The main steps of the proof are the following.
 
 Let $F=\sum F_{j=1}^nF_j$ be the reduced $g$-exceptional divisor. Then write $F=F^{\prime}+F^{\prime\prime}$, where $F^{\prime}=\sum_{j=1}^rF_j $, where $F_{j}$, $j=1,\ldots,r$ are the $g$-exceptional curves which are not $h$-exceptional, and $F^{\prime\prime}=\sum_{j=r+1}^nF_j$ are the $g$-exceptional curves that are also $h$-exceptional. Notice that $F^{\prime}\not=0$ because if that was the case then there would be a birational morphism $\psi \colon Y \rightarrow Z$. Then by the adjunction formula, 
 \[
 K_Y=\psi^{\ast}K_Z+ \tilde{F}=\tilde{F},
 \]
since $K_Z=0$, where $\tilde{F}$ is a $\psi$-exceptional divisor. Then $K_Y^2=\tilde{F}^2\leq 0$, which is impossible since $K_Y$ is ample.
 
 Then I will show that at least one of the following is true.
 
  \begin{enumerate}
 \item $\dim H^0(\mathcal{O}_Y(2K_Y))\geq 2$ and hence $\dim|2K_Y|\geq 1$.
 \item There exists a divisor $B=\sum_{j=1}^r n_j F_j$, and a positive number $\nu\leq K_X^2$, such that $\dim |\nu K_{Y^{\prime}}+B| \geq 1$. Moreover, the linear system $|\hat{B}|$, where $\hat{B}=h_{\ast}B$ in $Z$ is either base point free or its moving part is base point free. This implies that $\dim|\nu K_Y| \geq 1$ and moreover, there exists an irreducible component $\tilde{W}$ of the general member of $\nu K_Y|$ such that $\dim|\tilde{W}| \geq 1$. Then if 
 $W=\pi^{\ast} \tilde{W}$, $\tilde{W}$ is reduced and irreducible and $\dim V(0)\geq 2$, where $V(0)$ is the eigenspace of $0$ of the map 
 $D_W^{\ast} \colon H^0(\mathcal{O}_X(W)) \rightarrow H^0(\mathcal{O}_X(W))$, which exists by Proposition~\ref{sec1-prop1}. Then the result will follow by applying Tate's theorem on the general member 
 of $|W|$.
  \end{enumerate}
 
 Suppose that $\dim H^0(\mathcal{O}_Y(2K_Y))\geq 2$.  Then $\dim V(0)\geq 2$, where $V(0)$ is the eigenspace of zero of $D_2^{\ast}\colon H^0(\omega_X^2) \rightarrow H^0(\omega_X^2)$. Then by arguing as in the case when $D^p=D$ and using Tate's theorem we see that if $K_X^2<(p-3)/6$, this case is impossible.
 
 Assume then that $K_X^2<(p-3)/6$ and therefore, as shown above, $H^0(\mathcal{O}_Y(K_Y))=H^0(\mathcal{O}_Y(2K_Y))=k$ and hence $p_g(Y)=1$.

 First I will show that $Y$ has rational singularities. Indeed. The Leray spectral sequence for $g$ gives 
 \[
 0 \rightarrow H^1(\mathcal{O}_{Y}) \rightarrow H^1(\mathcal{O}_{Y^{\prime}})\rightarrow H^0(R^1g_{\ast}\mathcal{O}_{Y^{\prime}}) 
 \rightarrow H^2(\mathcal{O}_Y)\rightarrow H^2(\mathcal{O}_{Y^{\prime}}) \rightarrow H^1(R^1g_{\ast}\mathcal{O}_{Y^{\prime}}).
 \] 
 Now since $g$ is birational it follows that $H^1(R^1g_{\ast}\mathcal{O}_{Y^{\prime}})=0$. Moreover, by Serre duality, $H^2(\mathcal{O}_Y)\cong H^0(\mathcal{O}_Y(K_Y))=k$ and 
 $H^2(\mathcal{O}_{Y^{\prime}})\cong H^0(\mathcal{O}_{Y^{\prime}}(K_{Y^{\prime}}))=k$ and $H^1(\mathcal{O}_{Y^{\prime}})=0$, since $Z$ is a $K3$ surface. Hence from the Leray sequence it follows that $H^1(\mathcal{O}_Y)=0$ and  $ R^1g_{\ast}\mathcal{O}_{Y^{\prime}}=0$. Therefore $Y$ has rational singularities as claimed. In particular, every $g$-exceptional curve is a smooth rational curve.

 Let $Q\in Y$ be an index 1 singular point of $Y$. Let $F_Q$ be the set of $g$-exceptional curves over $Q$. Then I will show that either every $g$-exceptional curve over $Q$ is also $h$-exceptional, or that $F_Q\subset Y^{\prime} - \mathrm{Ex}(h)\cong Z-h(\mathrm{Ex}(h))$, where $\mathrm{Ex}(h)$ is the exceptional set of $h$. 
 
 Indeed. Since $Y$ has rational singularities, $Q\in Y$   must be a DuVal singularity. Therefore, if $F_i$ is a $g$-exceptional curve over $Q$, then $a_i=0$ in the adjunction formula~\ref{sec2-eq-1} and $K_{Y^{\prime}}\cdot F_i=0$. But since $Z$ is a $K3$ surface, $K_{Y{\prime}}=\sum_{j=1}^m b_j E_j$. Therefore, either $F_i$ is $h$-exceptional, or $F_i  \cdot E_j=0$, for all $j=1,\dots, m$ and hence $F_i \cap \mathrm{Ex}(h) =\emptyset$. Let $F^{\prime}_Q \subset F_Q$ be the maximal subset of $F_Q$ which consists of $g$-exceptional curves which are also $h$-exceptional. Suppose that $F^{\prime}_Q$ is empty. Then since for any $F_i \in F_Q$, $F_i \cdot K_{Y^{\prime}}=0$, it follows that $F_i \cdot E_s =0$, for any $h$-exceptional curve $E_s$. Therefore $F_i \cap \mathrm{Ex}(h)=\emptyset$, for all $F_i \in F_Q$ and hence $F_Q \subset Y^{\prime} - \mathrm{Ex}(h)$, as claimed. Suppose that $F^{\prime}_Q\not= \emptyset$ and that $F^{\prime}_Q \underset{\not=}{\subset} F_Q$. Then since the exceptional set of $g$ is connected over a neighborhood of $Q$, there exists a $g$-exceptional curve $F_i \in F_Q -F^{\prime}_Q$ such that $F_i$ intersects at least one curve in $F^{\prime}_Q$. But then since $K_{Y{\prime}}\cdot F_i =0$, it follows that $F_i$ does not meet any $h$-exceptional curve which is impossible because it meets at least one member of  $F^{\prime}_Q$ which by definition is $h$-exceptional. 
 
 Let $\hat{F}_i=h_{\ast}F_i$, $i=1,\dots, r$, be the birational transforms of the $F_i$ in $Z$. Consider next cases with respect to whether the curves $\hat{F}_i$ are either all smooth or there exists a singular one among them. As has been shown above, if a $g$-exceptional curve $F_i$ exists such that $\hat{F}_i=h_{\ast}F_i$ is singular, then $F_i$ must lie over an index $\geq 2$ point of $Y$. Therefore $F_i$,  appears with a  positive coefficient in the adjunction formula~\ref{sec2-eq-1}. Note also that $Y^{\prime}$ has singular points of index $\geq 2$ because otherwise, since it has rational singularities, it would have  at worst DuVal singularities and hence $K_{Y^{\prime}}=g^{\ast}K_Y$. But then $K_Y^2=K_{Y^{\prime}}^2\leq 0$, which is impossible since $K_Y$ is ample.
 
 \textit{Case 1.} Suppose that there exists an $1\leq i \leq r$ such that $\hat{F}_i$ is singular. In this case I will show that 
  \[
 (K_X^2)^3+(K_X^2)^2>p-3.
 \]

From the previous discussion, $F_i$ must lie over a singular point of $Y$ of index $\geq 2$. After a renumbering of the $g$-exceptional curves we can assume that $i=1$. Then by the adjunction formula
 \[
 \hat{F}_1^2=2p_a(\hat{F}_1)-2-K_Z\cdot \hat{F}_1 =2p_a(\hat{F}_1)-2\geq 0.
 \]
 Hence the linear system $|\hat{F}_1|$ in $Z$ is base point free~\cite[Propositions 3.5,  3.10]{Hu16}.
 
\begin{claim}\label{sec3-claim-1}  Let $Q\in \hat{F}_1$ be a singular point of $\hat{F}_1$ and $m=m_Q(\hat{F}_1)$ be the multiplicity of the singularity. Then
\begin{gather}\label{sec3-eq-14}
m_Q(\hat{F}_1) \leq K_X^2.
\end{gather} 
 \end{claim}
In order to prove the claim, observe the following. Over a neighborhood of any singular point of $\hat{F}_1$, $F_1$ can meet at most two distinct $h$-exceptional curves $E_i$ and $E_j$, and moreover it must intersect each one of them with multiplicity 1. Indeed.

Suppose that $F_1$ meets three distinct $h$-exceptional curves $E_{i}$, $E_j$ and $E_s$ (over the same point of $Z$). Since $h$ is a composition of blow ups, it follows that  $E_i\cap E_j \cap E_s=\emptyset$. Hence the intersection of $F_1$ and $E_i\cup E_j \cup E_s$ consists of at least two distinct points, say $P$ and $Q$. Up to a change of indices we can assume hat $P\in E_i$ and $Q\in E_j$.  Then the union $\mathrm{Ex}(h)\cup F_1$, where $\mathrm{Ex}(h)$ is the exceptional set of $h$, contains a cycle. Therefore from the equations~\ref{sec2-eq-1} it follows that
\begin{gather}\label{sec3-eq-14}
K_Y=\sum_{j=1}^n b_j\tilde{E}_j,
\end{gather}
where $\tilde{E}_j=g_{\ast}E_j$, $j=1,\ldots, n$. Moreover if $E_j^2=-1$, then $\tilde{E}_j\not=0$. But then, if $F_1$ meets at least two distinct $h$-exceptional curves, $\cup_{j=1}^n \tilde{E}$ contains either a singular curve or a cycle. In any case,  if $\tilde{C}=\sum_{j=1}^n b_j\tilde{E}_j$ then  
$H^1(\mathcal{O}_{\tilde{C}})\not=0$. But then from the equation in cohomology
\[
H^1(\mathcal{O}_Y) \rightarrow H^1(\mathcal{O}_{\tilde{C}}) \rightarrow H^2(\mathcal{O}_Y(-\tilde{C}))\rightarrow H^2(\mathcal{O}_Y)\rightarrow 0,
\]

and since $H^1(\mathcal{O}_Y)=0$, $H^2(\mathcal{O}_Y)=k$, it follows that $\dim H^2(\mathcal{O}_Y(-\tilde{C}))\geq 2$.  Then by duality,
\[
\dim H^0(\mathcal{O}_Y(K_Y+\tilde{C})=\dim H^0 (\mathcal{O}_Y(2K_Y))\geq 2,
\]
a contradiction. Hence $F_1$ meets at most two distinct $h$-exceptional curves. Suppose that $F_1$ meets an $h$-exceptional curve $E_i$ and $E_i \cdot F_1 \geq 2$. Then there are two possibilities. Either $E_i$ is also $g$-exceptional or it is not. Suppose that $E_i$ is $g$-exceptional. But this is impossible because $Y$ has rational singularities and in such a case two $g$-exceptional curves cannot intersect with multiplicity bigger than one. Suppose that $E_i$ is not $g$-exceptional. Then $\tilde{E}_i=g_{\ast}E_i$ is singular and therefore $h^1(\mathcal{O}_{\tilde{E}_i})\geq 1$. But then $h^1(\mathcal{O}_{\tilde{C}})\geq 1$ and hence arguing as before we see that $\dim H^0(\mathcal{O}_Y(2K_Y))\geq 2$, which is impossible. Hence it has been shown that over a neighborhood of any singular point of $\tilde{F}_1$, $F_1$ meets at most two $h$-exceptional curves with multiplicity at most one. 

Next I will show that 
\begin{gather}\label{sec3-eq-15}
m_Q(\hat{F_i}) \leq K_{Y^{\prime}}\cdot F_i.
\end{gather}
The map $h$ is a composition of blow ups of points of $Z$. Since $\hat{F}_i$ is singular, $h$ must blow up the singular points of $\hat{F}_i$. Let $h_1 \colon Y_1 \rightarrow Z$ be the blow up of $Q\in Z$. Then there exists a factorization
\[
\xymatrix{
Y^{\prime}\ar[dr]^{h_2}\ar[rr]^h & & Z \\
    & Y_1\ar[ur]^{h_1} & 
    }
\]
Then also $h_1^{\ast}\hat{F}_i = (h_1)_{\ast}^{-1}\hat{F}_i +m_Q(\hat{F}_i)E_1$, where $E_1$ is the $h_1$-exceptional curve and $  (h_1)_{\ast}^{-1}\hat{F}_i$ is the birational transform of $\hat{F}_i$ in $Y_1$.     
From this it follows that $E_1\cdot (h_1)_{\ast}^{-1}\hat{F}_i=m_Q(\hat{F}_i)$.  Also $K_{Y_1}=h_1^{\ast}K_Z+E_1=E_1$. Therefore $K_{Y_1}\cdot (h_1)_{\ast}^{-1}\hat{F}_i=m_Q(\hat{F}_i)$. Moreover,  
\[
K_{Y^{\prime}}=h_2^{\ast}K_{Y_1}+E^{\prime},
\]
where $E^{\prime}$ is an effective $h_2$-exceptional divisor. But then
\[
K_{Y^{\prime}}\cdot F_i = h_2^{\ast}K_{Y_1}\cdot F_i+E^{\prime}\cdot F_i \geq K_{Y_1}\cdot  (h_2)_{\ast}F_i =K_{Y_1}\cdot (h_1)_{\ast}^{-1}\hat{F}_i =m_Q(\hat{F}_i).
\]
This proves the claim. 

As it has been shown earlier, $F_i$ meets at most two $h$-exceptional curves $E_j$ and $E_s$, with the possibility $j=s$, each one of them  with intersection multiplicity one. 

Suppose that $E_i \not= E_j$ and that $F_i$ intersects $E_j$ and $E_s$ at the same point $Q$. Hence $E_j\cap E_s \cap F_i \not=\emptyset$. Then since $Y$ has rational singularities it is not possible that $E_j$ and $E_s$ are both $g$-exceptional. 

Suppose that $E_s$ is $g$-exceptional but $E_j$ is not $g$-exceptional. Then $g_{\ast}E_j$ would be singular.  But then from the equation~\ref{sec3-eq-14} and the arguments following it, we get again 
 that $\dim H^0(\mathcal{O}_Y(2K_Y)) \geq 2$, a contradiction.
 
 Hence neither of $E_j$ and $E_s$ is $g$-exceptional. Now write
 \[
 K_{Y^{\prime}}=b_jE_j+b_sE_s+\sum_{r\not= j,s}b_r E_r.
 \]
 Then from the equation~\ref{sec3-eq-15} and the facts that $E_j\cdot F_i=E_s\cdot F_i=1$, $F_i \cdot E_r=0$, for $r\not=j,s$,  it follows that 
 \[
 m_Q(\hat{F}_i)\leq K_{Y^{\prime}}\cdot F_i =b_j+b_s.
 \]
 Then from the equation~\ref{sec3-eq-14} and the fact that $E_j$ and $E_s$ are not $g$-exceptional it follows that 
 \[
 K_Y=b_j\tilde{E}_j+b_s\tilde{E}_s+\tilde{W},
 \]
 where $W$ is an effective divisor. Then since $K_X=\pi^{\ast}K_Y$ we get that 
\[
K_X=b_j\pi^{\ast}\tilde{E}_j+b_s\pi^{\ast}\tilde{E}_s+\pi^{\ast}\tilde{W}.
\]
Now considering that $K_X$ is ample we get that 
\[
m_Q(\hat{F}_i)\leq K_{Y^{\prime}}\cdot F_i =b_j+b_s\leq b_j\pi^{\ast}\tilde{E}_j \cdot K_X+b_s \pi^{\ast}\tilde{E}_s \cdot K_X\leq K_X^2,
\]
as claimed.

Suppose finally that $E_j=E_s$, i.e., $F_i$ meets exactly one $h$-exceptional curve. Then $K_{Y^{\prime}}\cdot F_i =b_j$. If $E_j$ is not $g$-exceptional then the previous argument proves the claim. Suppose that $E_j$ is also $g$-exceptional. Then there exists a $-1$ $h$-exceptional curve $E_{\lambda}$ such that $b_{\lambda}\geq b_j$. The previous argument now shows that $b_{\lambda} \leq K_X^2$ and hence \[
m_Q(\hat{F}_i) \leq b_j\leq b_{\lambda}\leq K_X^2.
\]
This concludes the proof of Claim~\ref{sec3-claim-1}.

\begin{claim}\label{sec3-claim-2} Let $B$ be any member of the linear system $|(K_X^2)K_{Y^{\prime}}+F_i|$.  Then  
\begin{gather}\label{sec3-eq-16}
B\sim W^{\prime}+\sum_{i=1}^m \gamma_i E_i,
\end{gather}
where $\gamma_i\geq 0$ for all $i$ and $W^{\prime}$ is the birational transform in $Y^{\prime}$ of a smooth curve $W$ in $Z$ such that  $|W|$ is base point free and $\mathrm{p}_a(W) \geq 1$.
\end{claim}

By~\cite[Proposition 3.5 and 3.10]{Hu16}, the linear system $|\hat{F}_i|$ is base point free and contains a smooth curve. Let $W \in |\hat{F}_i|$. be a general member. Then $W$ is reduced and irreducible and moreover it does not pass through $h(\mathrm{Ex}(h))$. Let $W^{\prime}$ be the birational transform of $W$ in $Y$. Then $W^{\prime}\cong W$. Now from Proposition~\ref{sec1-prop-6} it follows that 
\begin{gather}\label{sec3-eq-17}
mK_{Y^{\prime}}-h^{\ast}\hat{F}_i+F_i=\sum_{i=1}^m\gamma_i E_i,
\end{gather}
where $\gamma_i \geq 0$, for all $1\leq i \leq m$, and $m$ is the maximum of the multiplicities of the singular points of $\hat{F}_i$. But from Claim~\ref{sec3-claim-1} it follows that $m\leq K_X^2$. Hence 
\begin{gather}\label{sec3-eq-18}
(K_X^2)K_{Y^{\prime}}-h^{\ast}\hat{F}_i+F_i=\sum_{i=1}^m\gamma^{\prime}_i E_i,
\end{gather}
for some $\gamma_i^{\prime}\geq 0$, for all $1\leq i \leq m$. Let now  $W \in |\hat{F}_i|$ be a general member. Then $W^{\prime}=h^{\ast}\hat{F}_i=F_i +(h^{\ast}\hat{F}_i-F_i)$. Then from the equation~\ref{sec3-eq-17} it follows that
\[
(K_X)^2K_{Y^{\prime}}+F_i=(K_X^2)K_{Y^{\prime}}+W^{\prime}-h^{\ast}\hat{F}_i+F_i=W^{\prime}+\sum_{i=1}^m\gamma^{\prime}_i E_i,
\]
for some $\gamma_i^{\prime}\geq 0$, $1\leq i \leq m$. This concludes the proof of Claim~\ref{sec3-claim-2}.

Now pushing down to $Y$ by $g_{\ast}$, and considering that $F_i$ is $g$-exceptional, we see that 
\begin{gather}\label{sec3-eq-19}
(K_X^2)K_Y\sim \tilde{W}+\sum_{j=1}^m\gamma_j^{\prime}\tilde{E}_j.
\end{gather}
 Moreover notice that from the construction of $W$, $\dim|\tilde{W}| \geq 1$. Now pulling up to $X$ by $\pi$ we get that
 \begin{gather}\label{sec3-eq-20}
(K_X^2)K_X\sim \hat{W}+\sum_{j=1}^m\gamma_j^{\prime}\pi^{\ast}\tilde{E}_j,
\end{gather} 
 where $\hat{W}=\pi^{\ast}\tilde{W}$. Moreover, unless $K_X^2>\sqrt{p}$, $\hat{W}$ is reduced and irreducible. Indeed, if $\hat{W}$ was not reduced, then $\pi^{\ast}=p\Delta$~\cite{RS76}, which would imply $K_X^2>\sqrt{p}$. From now on assume that $K_X^2<\sqrt{p}$ and hence $\hat{F}$ is reduced and hence an integral curve of $D$. 
 
 Work now with the linear system $|\hat{W}|$. Let $V(0) \subset |\hat{W}|$ be the eigenspace of $0$ of $D_w^{\ast} \colon H^0(\mathcal{O}_X(\hat{W})) \rightarrow H^0(\mathcal{O}_X(\hat{W}))$. Then it has been shown that $\dim V(0)\geq 2$.  Moreover, every member of $V(0)$ is reduced and irreducible and it contains fixed points of $D$. Indeed. suppose that $C\in V(0)$ was a member that did not contain any fixed points, of $D$. Then if $\tilde{C}=\pi(C)$, $C=\pi^{\ast}(\tilde{C})$ and $\tilde{C}$ is contained in the smooth part of $Y$. Moreover, $\tilde{C} \sim \tilde{W}$. Hence $\tilde{W}$ is Cartier in $Y$. Then from the equation~\ref{sec3-eq-20} we get that 
 \[
 (K_X^2)^2 =\hat{W}\cdot K_X +\sum_{j=1}^m\gamma_j^{\prime}\pi^{\ast}\tilde{E}_j\cdot K_X\geq  K_X \cdot \hat{W}=\pi^{\ast}K_Y\cdot \pi^{\ast}\tilde{W} =p K_Y\cdot \tilde{W} \geq p,
 \]
 since $\tilde{W}$ is Cartier, and hence $K_X^2\geq \sqrt{p}$, a contradiction.
 
 Now from the equation~\ref{sec3-eq-20} it follows that $K_X\cdot \hat{W} \leq (K_X^2)^2$. Then by the Hodge index theorem we get that $\hat{W}^2\leq (K_X^2)^3$. Then from the adjunction formula we get that 
 
 \begin{gather}\label{sec3-eq-21}
 \mathrm{p}_a(\hat{W})\leq \frac{1}{2}((K_X^2)^3+(K_X^2)^2)+1.
 \end{gather}
 
 Moreover, $\mathrm{p}_a(\hat{W} )\geq 1$. Indeed. Since $\hat{W} \rightarrow \tilde{W}$ is purely inseparable of degree $p$, we see that $\pi$ factors through the geometric Frobenius $F\colon \hat{W} \rightarrow \hat{W}^{(p)}$ and therefore there exists a birational map $\tilde{W} \rightarrow \hat{W}^{(p)}$. Hence $\mathrm{p}_a(\hat{W})=\mathrm{p}_a(\hat{W}^{(p)})\leq \mathrm{p}_a(\tilde{W})$. But there is also a birational map $W^{\prime} \rightarrow \tilde{W}$ and hence $\mathrm{p}_a(\tilde{W}) \geq \mathrm{p}_a(W^{\prime})=\mathrm{p}_a(W) \geq 1$.

 Consider now the general element $C$ of $|\hat{W}|$. Suppose that $C$ is smooth. Suppose that $\mathrm{p}_a(C)\geq 2$. Then since $C$ is an integral curve of $D$, this is impossible since smooth curves of genus at lest 2 do not have nontrivial global vector fields. Suppose that $\mathrm{p}_a(C)=1$. Then $C$ is a smooth elliptic curve with a global vector field. But it has been shown that $C$ contains fixed points of $D$. But this is impossible since vector fields on smooth elliptic curves do not have fixed points. Therefore $C$ must be singular. Then from Tate's theorem again it follows that $(p-1)/2 <\mathrm{p}_a(C)$. This implies from the equation~\ref{sec3-eq-21} that 
 \[
 (K_X^2)^3+(K_X^2)^2>p-3,
 \]
 as was to be shown.
 
 \textit{Case 2.} Suppose that $\hat{F}_i$ is smooth for any $i=1,\ldots, r$. In this case I will show that $ K_X^2> (p-3)/506$.

 Since $\hat{F}_i$ is smooth it follows that $\hat{F}_i\cong \mathbb{P}^1$ and that $\hat{F}_i^2=-2$, for all $i=1,\dots,r$. Consider now cases with respect to whether or not every connected subset of the set $\{\hat{F},\ldots,\hat{F}_r\}$ is contractible. 
 
 \textit{Case 2.1.} Suppose that every connected subset of $\{\hat{F}_1,\ldots,\hat{F}_r\}$ is contractible. Let $\phi \colon Z \rightarrow W$ be the contraction. Since $\hat{F}_i^2=-2$, for all $i=1,\ldots, r$, $W$ has Duval singularities. Therefore $K_Z=\phi^{\ast}K_W$. Hence,  since $K_Z=0$, $K_W=0$. Then there exists a factorization
 \[
 \xymatrix{
 Y^{\prime}\ar[rr]^g\ar[dr]^{\phi h} & & Y\ar[ld]^{\psi}\\
        & W &
 }
 \]
 Hence $K_Y=\psi^{\ast}K_W+\tilde{E}=\tilde{E}$, where $\tilde{E}$ is a divisor supported on the $\psi$-exceptional set. But then $K_Y^2=\tilde{E}^2\leq 0$, which is impossible since $K_Y$ is ample.
 
 \textit{Case 2.2.} There exists at least one connected subset of $\{\hat{F}_1,\ldots,\hat{F}_r\}$ which is not contractible. 
 
 \begin{claim} There exists integers $0 \leq \gamma_j \leq 22$, $j=1,\ldots, r$ such that the linear system $|44K_{Y^{\prime}}+\sum_{j=1}^r\gamma_j F_j|$ has dimension at least one. Moreover, let 
 $B\in |44K_{Y^{\prime}}+\sum_{j=1}^r\gamma_j F_j| $ be any member. Then if $K_X^2 <p/44$, 
 \[
 B\sim W^{\prime}+\sum_{i=1}^m \gamma_i E_i,
\]
where $\gamma_i\geq 0$ for all $i$ and $W^{\prime}$ is the birational transform in $Y^{\prime}$ of a reduced and irreducible curve $W$ in $Z$ such that  $|W|$ is base point free and $\mathrm{p}_a(W) \geq 1$.

 \end{claim}\label{sec3-claim-5}
 
 In order to prove the Claim~\ref{sec3-claim-5} it is necessary to prove first the following.
 
 \begin{claim}\label{sec3-claim-6} 
There exist numbers $0 \leq \gamma_i \leq 22$, $i=1,\dots, r$ such that if $B=\sum_{i=1}^r\gamma_i \hat{F}_i$, then $B\cdot \hat{F}_i \geq 0$, for all $1\leq i \leq r$, and $B^2\geq 0$.
\end{claim}

I  proceed to prove the claims. Let $\{\hat{F}_1,\ldots, \hat{F}_s\}$, $s<r$, be the maximal connected subset of $\{\hat{F}_1,\ldots,\hat{F}_r\}$ which is contractible. Since the rank of $\mathrm{Pic}(Z)$ is at 
most 22~\cite{Hu16} it follows that $r\leq 22$. However, if the Picard number of $Z$ is 22 then $Z$ is in fact unirational~\cite{Li15}. Therefore we can assume that the Picard number of $Z$ is at most 21 and hence $r \leq 21$.

 Let $\phi \colon Z \rightarrow Z^{\prime}$ be the contraction of $\{\hat{F}_1,\ldots, \hat{F}_s\}$. Then $Z^{\prime}$ has DuVal singularities. Since $\cup\hat{F}_{i=1}^r$ is connected, there exists a curve $\hat{F}_j \in \{\hat{F}_{s+1},\ldots, \hat{F}_r\}$,  such that $\hat{F}_j\cap (\cup\hat{F}_{i=1}^s)\not=\emptyset$ and of course $\hat{F}_j$ does not contract by 
$\phi$. Let $F_j^{\prime}=\phi_{\ast}\hat{F}_j$. Observe now that one of the following happens.
\begin{enumerate}
\item $F^{\prime}_j$ is singular. In this case one of the following happens.
\begin{enumerate}
\item $\hat{F}_j$  meets two distinct $\phi$-exceptional curves, say $\hat{F}_{\lambda}$, $\hat{F}_{\mu}$, $1\leq \lambda < \mu \leq s$.  
\item $\hat{F}_j$ meets one $\phi$-exceptional curve $\hat{F}_i$, $i \leq s$, such that $\hat{F}_j \cdot \hat{F}_i \geq 2$. 
\item $\hat{F}_j$ meets exactly one $\phi$-exceptional curve $\hat{F}_i$ and $\hat{F}_i \cdot \hat{F}_j =1$.
\end{enumerate}
\item $F^{\prime}_j$ is smooth.
\end{enumerate}
 
 Suppose that the case 1.a happens. Then let $B=\hat{F}_j +\sum_{i=\lambda}^{\mu} \hat{F}_i$. Then this is a cycle of $-2$ rational curves and $B\cdot \hat{F}_i =0$, for all $i\in \{j, \lambda, \lambda +1,\ldots, \mu\}$, and $B^2=0$.
 
 Suppose that the case 1.b happens. Then let $B=\hat{F}_j+\hat{F}_i$. Then $B\cdot \hat{F}_j \geq 0$, $B\cdot \hat{F}_i \geq 0$ and $B^2\geq 0$.
 
 Suppose that the case 1.c happens. This can happen only when the fundamental cycle of the singularity of $W$ is not reduced, i.e., when $W$ has either a $D_s$, $E_6$, $E_7$ or $E_8$ singularity. 
 
 Suppose that $W$ has a $D_s$ singularity.  The fundamental cycle of the singularity is $\hat{F}_1 +2\sum_{i=1}^{s-2}\hat{F}_i +\hat{F}_{s-1}+\hat{F}_s$. Hence in this case  $\hat{F}_j$ must intersect some $\hat{F}_i$, $2\leq i \leq s-2$. Let $B=\hat{F}_j+\hat{F}_{i-1}+2\sum_{k=1}^{s-2}\hat{F}_k +\hat{F}_{s-1}+\hat{F}_s$. Then  $B\cdot \hat{F}_j = 0$, $B\cdot \hat{F}_k = 0$, $i-1\leq k \leq s$ and $B^2=0$. 
 
 The cases when $W$ has $E_6$, $E_7$ or $E_8$ singularities are treated similarly. 
 
 Suppose finally that case 2 happens, i.e., $F^{\prime}_j$ is smooth. Then write
 \[
 \phi^{\ast}F_j^{\prime}=\hat{F}_j+\sum_{i=1}^sa_i\hat{F}_i.
 \]
 Let $m$ be the index of $F^{\prime}_j$ in $S$. Then according to Proposition~\ref{sec1-prop-7}, $m\in\{2,3,4, s+1\}$ (the exact value of $m$ depends on the type of singularities of $S$). Moreover, if $S$ has an $A_s$ or $D_s$ singularity, then $ma_i \leq s$, for all $i=1,\ldots, s$. If $S$ has an $E_6$ singularity then $ma_i \leq 6$ for all $i$ and if $S$ has an $E_7$ singularity then $ma_i \leq 7$ for all $i$. In any case $ma_i$ are positive integers at most 21, for all $i=1,\ldots, s$, and $m\leq s+1\leq 22$. Let $\gamma_i= ma_i$, for all $i=1,\ldots, s$ and $\gamma_j=m$. Let also
 \[
 B=m\phi^{\ast} F_j^{\prime}=\gamma_j\hat{F}_j+\sum_{i=1}^s\gamma_i\hat{F}_i.
 \]
 Then $B\cdot \hat{F}_i =0$, $i=1,\dots, s$, and $B\cdot \hat{F}_j=m(F^{\prime}_j)^2\geq 0$ (if $(F^{\prime})^2<0$, then the set $\{\hat{F}_j, \hat{F}_1, \ldots, \hat{F}_s\}$ would be contractible which is not true). Moreover, $B^2\geq 0$. This concludes the proof of Claim~\ref{sec3-claim-6}.
 
 So it has been proved that there exists a nontrivial effective divisor $B=\sum_{i=1}^r \gamma_i \hat{F}_i$ in $Z$, such that $0\leq \gamma_i \leq 22$, $i=1,\dots,r$, and $B\cdot \hat{F}_i \geq 0$ for all $i=1,\dots, r$ and $B^2\geq 0$. In particular,  if three of the $\hat{F}_i$ meet at a common point or two have a tangency then $B$ is reduced.  Now since $\hat{F}_i$ is smooth for all $i$, every multiple $\gamma_i \hat{F}_i$ can be considered singular with multiplicity $\gamma_i \leq 22$ at every point. If two, say $\hat{F}_i$ and $\hat{F}_j$ meet at a point with multiplicity 1 then $B$ has at this point multiplicity $\gamma_i+\gamma_j\leq 22+22=44$. Therefore from Proposition~\ref{sec1-prop-6} it follows that $44K_{Y^{\prime}}-h^{\ast}B+B^{\prime}$ is an effective divisor, where $B^{\prime}=\sum_{i=1}^r \gamma_i F_i$. 
 
 Consider now cases with respect to $B^2$.
 
 Suppose that $B^2=0$. Then by~\cite[Proposition 3.10]{Hu16}, the linear system $|B|$ is base point free. Moreover, by~\cite[Page 31]{Hu16},~\cite{Za44},~\cite[Theorem 6.3]{Jou83}, if $p\not=2,3$, 
 $B\sim p^{\nu}W$, where $W$ is a smooth irreducible elliptic curve. In fact $|W|$ is also base point free~\cite[Proposition 3.10]{Hu16}. I claim that if $\nu >0$, then $K_X^2>p^{\nu}/44$. Indeed. 
 \[
 44K_{Y^{\prime}}+B^{\prime}=44K_{Y^{\prime}}+B^{\prime}-h^{\ast}B+h^{\ast}B=(44K_{Y^{\prime}}-h^{\ast}B+B^{\prime})+p^{\nu}W^{\prime}=\\
 p^{\nu}W^{\prime} +E,
 \]
 where $E$ is an effective divisor whose prime components are $g$-exceptional and $h$-exceptional curves and $W^{\prime}$ is the birational transform of $W$ in $Y^{\prime}$ ($W$ can be chosen to avoid the points blown up by $h$). Then by pushing down to $Y$ and then pulling up on $X$ we find that
 \begin{gather}\label{sec3-eq-22}
 44K_X=p^{\nu}\pi^{\ast}\tilde{W}+\pi^{\ast}\tilde{E},
 \end{gather}
 where $\tilde{W}=g_{\ast}W$ and $\tilde{E}=g_{\ast}E$. Also notice that since $W$ moves in $Z$, $W^{\prime}$ is not $g$-exceptional and hence $\tilde{W}\not=0$. Then, since $K_X$ is ample, it follows that $44K_X^2\geq p^{\nu}$. Assume from now on that $K_X^2<p/44$. Then  $\nu=0$. Also note that if $K_X^2<p/44$, then it follows from the equation~\ref{sec3-eq-22} that  $\hat{W}=\pi^{\ast}\tilde{W}$ is irreducible and hence $\hat{W}$ is an integral curve of $D$. Moreover, by the choice of $W$, $\dim |\tilde{W}| \geq 1$. Hence $\dim V(0)\geq 2$, where $V(0)$ is the eigenspace of 0 of the map 
 $D^{\ast}_{\hat{W}}\colon H^0(\mathcal{O}_X(\hat{W}))\rightarrow H^0(\mathcal{O}_X(\hat{W}))$ of Proposirion~\ref{sec1-prop1}. 
 
 Work now with $|\hat{W}|$. From the equation~\ref{sec3-eq-22} and by using the Hodge index theorem as in the previous cases, we find that 
 \[
 \mathrm{p}_a(\hat{W})\leq 22\cdot 45 K_X^2+1.
 \]
 Also notice that the normalization of $\hat{W}$ is isomorphic to $W$ and hence it is a smooth elliptic curve. Repeating now the argument following the equation~\ref{sec3-eq-21} we get that if this is the case then
 $(p-1)/2$ is smaller than $\mathrm{p}_a(\hat{W})$ and hence
 \begin{gather}\label{sec3-eq-24}
 K_X^2>\frac{p-3}{44\cdot 45}.
 \end{gather}
 
 Suppose finally that $B^2>0$. Then $B$ is nef and big. Then by~\cite[Corollary 3.15]{Hu16}, $B\sim mW+C$, where $W$ is a smooth elliptic curve and $C\cong \mathbb{P}^1$. Moreover, as before, the linear system $|W|$ is base point free~\cite[Proposition 3.10]{Hu16}. Repeating now the arguments of the previous case we find that
 \[
 44K_{Y^{\prime}}+B^{\prime}=mW^{\prime}+C^{\prime}+E,
 \]
 where $W^{\prime}$ and $C^{\prime}$ are the birational transforms of $W$ and $C$ in $Y^{\prime}$ and $E$ is effective. Let again $\hat{W}=\pi^{\ast}g_{\ast}W^{\prime}$. Repeating now word by word the arguments of the case when $B^2=0$ for the linear system $|\hat{W}|$ it follows that again in this case the relation~\ref{sec3-eq-24} must hold. Putting together all the previous results concludes the proof of the statement of Case 3.2.
 
 \begin{remark} I believe that by looking at the proof in the case when $D^p=0$ it should be possible to improve the estimate for $K_X^2$. Especially the  denominator $44\cdot 45$ in the inequality $K_X^2<\frac{p-3}{44\cdot 45}$ should be reduced if one studies more carefully the behavior of the coefficients  of the divisor $B=\sum_{j=1}^r\gamma_j\hat{F}_j$ that appears in Claim~\ref{sec3-claim-6}. The bound $\gamma_j \leq 22$ is simply a consequence that $Z$ has Picard number at most 22 which implies that the contraction $Z\rightarrow W$ of the maximal contractible subset of $\{\hat{F}_1,\dots,\hat{F}_r\}$ has DuVal singularities of type $A_s$, $D_s$, $E_6$, $E_7$ or $E_8$, $s\leq 22$. However if one studies carefully the possible contractible configurations of the curves $\hat{F}_i$, many cases about the singularities of $W$ should be excluded and the bound for $K_X^2$ should be improved.  
 \end{remark}

 \textbf{Case 4.} Suppose that $Z$ is Enriques. Then in this case I will show that
 \begin{enumerate}
 \item Suppose that $D^p=D$. Then if $K_X^2<(p-3)/144$, $X$ is unirational and $\pi_1(X)=\mathbb{Z}/2\mathbb{Z}$. 
 \item Suppose that $D^p=0$.  Then if $8(K_X^2)^3+4(K_X^2)^2<p-3$ and $K_X^2<\frac{p-3}{2\cdot 44\cdot 45}$, then $X$ is unirational and $\pi_1(X)=\mathbb{Z}/2\mathbb{Z}$. In particular, this happens if 
 \[
 K_X^2< \mathrm{min}
 \left\{
 \frac{p-3}{2\cdot 44\cdot 45}, \frac{1}{2}\sqrt[3]{\frac{p-3}{2}}
 \right\}.
 \]
  \end{enumerate}
 
 In this case, since we assume $p\not= 2$, $\pi_1(X)=\pi_1(Z)=\mathbb{Z}/2\mathbb{Z}$. Then there exists an \'etale double cover  
 $\nu \colon W\rightarrow X$ of $X$ (we assume that $p\not= 2$). Then $K_W=\nu^{\ast}K_X$ and $K_W^2=2K_X^2$. Also $D$ lifts to a nontrivial global vector field $D^{\prime}$ on $W$. Then in the corresponding diagram~\ref{sec2-diagram-1} for $W$, $Z$ is going to be a $K3$ surface. Then the results from the previous cases for $W$ show the claimed result.

 \subsection{Suppose that $\kappa(Z)=-1$} In this case I will show that $X$ is unirational and that $\pi_1(X)=\{1\}$.
 
 Since $\kappa(Z)=-1$, $Z$ is a ruled surface. Hence there exists a fibration of smooth rational curves $\phi \colon Z \rightarrow B$, where $B$ is a smooth curve. 
 
 Suppose that a $g$-exceptional curve $F$ does map to a point in $B$ by the map $\phi h$. Then there exists a dominant morphism $F\rightarrow B$. But since $B$ is a rational curve then $B\cong \mathbb{P}^1$. Hence $Z$ is rational. Therefore $X$ is unirational and moreover $\pi_1(X)=\pi_1(Y)=\pi_1(Z)=\{1\}$.
 
 Suppose that every $g$-exceptional curve is contracted to a point in $B$ by $\phi h$. Then there exists a factorization
 \[
 \xymatrix{
 Y^{\prime}\ar[r]^g\ar[d]^h & Y \ar[d]^{\psi}\\
 Z \ar[r]^{\phi} & B
 }
 \]
 Let $Y_b$ be a general fiber of $\psi$. Then $Y_b\cong \mathbb{P}^1$. Therefore since $Y_b^2=0$, it follows that $K_Y \cdot Y_b=-2$. But then
 \[
 K_X \cdot \pi^{\ast} Y_b=\pi^{\ast}K_Y \cdot \pi^{\ast}Y_b=pK_Y \cdot Y_b=-2p<0,
 \]
 which is impossible since $K_X$ is ample. Hence this case is impossible. Therefore $B=\cong \mathbb{P}^1$ and hence $X$ is unirational and $\pi_1(X)=\{1\}$, as claimed.
 
 The statement of Theorem~\ref{sec3-th-1} follows by putting together the results that were obtained in every case with respect to the Kodaira dimension $\kappa(Z)$ of $Z$.

 \end{proof}

\section{Vector fields with nontrivial divisorial part}\label{sec-4}
Fix the notation as in Section~\ref{sec-2}. The main result of this section is the following.
\begin{theorem}\label{sec4-theorem-1}
Let $X$ be a smooth canonically polarized surface defined over an algebraically closed field of characteristic $p>0$. Suppose that $X$ admits a nontrivial global vector field $D$ such that $D^p=0$ or $D^p=D$. Assume moreover that the divisorial part of $D$ is not trivial. Then if $K_X^2<(p-3)/156$, $X$ is unirational and $\pi_1(X)=\{1\}$.
\end{theorem}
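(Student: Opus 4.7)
The strategy is to adapt the framework of Theorem~\ref{sec3-th-1} to the present setting, the essential modification being forced by the adjunction formula
\[
K_X=\pi^{\ast}K_Y+(p-1)\Delta
\]
of equation~\eqref{sec2-eq-2}. Because $\Delta\neq 0$, one can no longer identify $K_X$ with $\pi^{\ast}K_Y$, so divisors pulled back from $Y$ give intersection numbers that differ from the ample theory of $K_X$ by terms involving $(p-1)\Delta$. Moreover the components of $\Delta$ are themselves integral divisors of $D$ on which $D$ restricts to zero, so any curve produced by the method must be separated from $\Delta$ before Propositions~\ref{sec1-prop4} and Corollary~\ref{sec1-cor2} can be applied. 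The plan is to run the case analysis of the proof of Theorem~\ref{sec3-th-1} according to $\kappa(Z)$, at each step producing an integral curve $\hat{W}$ of $D$ of genus $O(K_X^2)$ not contained in $\Delta$ and carrying fixed points of $D$, and then deducing a contradiction from Proposition~\ref{sec1-prop4} (or from Tate's theorem applied to the general member of a positive-dimensional family of such curves, as in Case~3.2 of Theorem~\ref{sec3-th-1}).

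First I would dispose of the easy cases. For $\kappa(Z)=-\infty$ the argument of Section~\ref{sec-3} carries over verbatim: either a $g$-exceptional rational curve dominates the base of the ruling (so $Z$, hence $X$, is unirational with $\pi_1(X)=\{1\}$), or else no $g$-exceptional curve does, producing a factorization whose general fibre gives $K_X\cdot(\textup{fibre})=0$, contradicting ampleness. For $\kappa(Z)=2$ and $\kappa(Z)=1$, I would pull back the very ample linear system $|4K_Z|$, respectively a multiple of the general elliptic fibre from the canonical bundle formula, through the diagram~\eqref{sec2-diagram-1}. The pullback $\pi^{\ast}\tilde{W}$ decomposes as $\hat{W}+c\,\Delta^{\prime}$ with $\Delta^{\prime}$ supported on $\Delta$; by Corollary~\ref{sec1-cor1} (using $K_X^2<p/156<p/(m^2+3m)$ for the relevant $m$) the reduced irreducible components of $\hat{W}$ outside $\Delta$ are integral for $D$ with nontrivial restriction. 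Hodge index together with the $(p-1)\Delta$-correction gives $p_a(\hat{W})\le c\cdot K_X^2+O(K_X^2)$, and Proposition~\ref{sec1-prop4} produces the contradiction once $K_X^2<(p-3)/156$.

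The substantive case is $\kappa(Z)=0$. The cases $Z$ abelian and $Z$ (quasi-)elliptic are eliminated exactly as in Section~\ref{sec-3}: rational $g$-exceptional curves cannot map nontrivially to an abelian variety, and a contracting factorization $X\to B$ with elliptic general fibre forces $K_X\cdot(\textup{fibre})=0$. This leaves $Z$ a K3 or Enriques surface. For $Z$ Enriques, after passing to the étale double cover $W\to X$ (available since $p\neq 2$), the lift $D^{\prime}$ of $D$ is still nontrivial with divisorial part of the same type, $K_W^2=2K_X^2$, and the minimal model of the corresponding diagram becomes a K3 surface; this is where the factor $2$ in the denominator $156=2\cdot 78$ appears. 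For $Z$ a K3 surface I would reproduce the analysis of Cases~3.1 and~3.2 of Theorem~\ref{sec3-th-1}, working with the eigenspace decomposition of $D^{\ast}$ acting on $H^0(\mathcal{O}_X(mK_X))$ for small $m$ given by Proposition~\ref{sec1-prop1}, and in the additive case searching for $\dim V(0)\geq 2$ inside $H^0(\mathcal{O}_X(\nu K_X))$ to obtain a positive-dimensional family of integral curves of $D$ to which Tate's theorem applies.

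The main obstacle is the combinatorial and numerical bookkeeping in the K3 subcase. One has to verify that for each eigenvector $s$ of $D^{\ast}$, the divisor of zeros $Z(s)$ contains an irreducible component not supported on $\Delta$ (for otherwise $D$ acts trivially there and no contradiction is available), and that after stripping off $\Delta$-components the residual curve $\hat{W}$ is still reduced, irreducible, connected through the fixed points of $D$, and has arithmetic genus controlled by $K_X^2$ via Hodge index with the $(p-1)\Delta$ correction term accounted for. Tracking these corrections through the Hodge-index estimate is what inflates the constant from the $72$ or $144$ of Section~\ref{sec-3} to $156$; a finer analysis of how the coefficients of $\Delta$ interact with the eigenspace decomposition, in the spirit of the closing remark of Section~\ref{sec-3}, ought to improve this constant.
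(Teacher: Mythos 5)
Your proposal has a genuine gap, and it is concentrated precisely in the one case that actually occurs. The paper's proof begins by invoking \cite[Theorem 6.1 and its proof]{Tz17}: if $K_X^2<p$ and the divisorial part $\Delta$ of $D$ is nonzero, then $\kappa(Z)=-1$. So the entire case analysis over $\kappa(Z)\in\{2,1,0\}$ that occupies most of your proposal --- the adaptation of $|4K_Z|$, the elliptic canonical bundle formula, the K3 and Enriques subcases, the \'etale double cover --- is vacuous here; none of those cases arise. (Your explanation of the constant $156$ as $2\cdot 78$ coming from the Enriques double cover is accordingly wrong: in the paper $156=2\cdot 78$ comes from requiring $\mathrm{p}_a(C)\leq 78K_X^2+1<(p-1)/2$ for an integral curve $C\in|3K_X+3\Delta|$, where $78=\tfrac{1}{2}(144+12)$ is produced by the adjunction formula together with the bounds $K_X\cdot\Delta\leq 3K_X^2$ and $\Delta^2\leq 9K_X^2$.)

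The fatal problem is in your treatment of $\kappa(Z)=-1$, which you dismiss as carrying over ``verbatim.'' It does not. The contradiction in Section~\ref{sec-3} for the case where every $g$-exceptional curve is contracted to a point of $B$ rests on the computation $K_X\cdot\pi^{\ast}Y_b=pK_Y\cdot Y_b=-2p<0$, which uses $K_X=\pi^{\ast}K_Y$. With $\Delta\neq 0$ the adjunction formula~\ref{sec2-eq-2} gives instead
\[
K_X\cdot\pi^{\ast}Y_b=-2p+(p-1)\,\Delta\cdot\pi^{\ast}Y_b,
\]
which is positive as soon as $\Delta\cdot\pi^{\ast}Y_b\geq 3$, so no contradiction with ampleness is available and the case cannot be excluded. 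This is exactly why the paper's Section~\ref{sec-4} exists: one must instead \emph{prove} $B\cong\mathbb{P}^1$ by exhibiting a rational curve on $X$ dominating $B$. The paper does this by showing the general member of $|3K_X|$ is reduced and irreducible, splitting into two cases according to whether $|3K_X|$ contains an integral curve of $D$; in the second case it establishes the bounds $K_X\cdot\Delta\leq 3K_X^2$ and $\Delta^2\leq 9K_X^2$ via a genus-change computation, then produces an integral curve in $|3K_X+3\Delta|=|\pi^{\ast}(3K_Y+3\tilde{\Delta})|$ using Proposition~\ref{sec1-prop1}, shows each of its components carries a fixed point of $D$, and finally shows by a fiber-intersection argument that some component not contained in $\Delta$ dominates $B$ and is rational by Corollary~\ref{sec1-cor2}. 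None of these steps appears in your proposal, so the argument as written does not close.
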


According to~\cite[Theorem 6.1 and its proof]{Tz17}, if $K_X^2<p$ and the divisorial part $\Delta$ of $D$ is not zero, then $\kappa(Z)=-1$. Assume from now on then that $K_X^2<p$. Then only the case $\kappa(Z)=-1$ needs to be studied.

Since $\kappa(Z)=-1$, $Z$ is a ruled surface. Therefore there exists a fibration of smooth rational curves $\phi \colon Z \rightarrow B$, where $B$ is a smooth curve. In order to prove the theorem it suffices to show that $B\cong \mathbb{P}^1$. 

Suppose that there exists a $g$-exceptional curve which dominates $B$. Then since every $g$-exceptional curve is rational, $B\cong \mathbb{P}^1$. 

Suppose then that every $g$-exceptional curve is contracted to a point in $B$ by $\phi h$.  Then there exists a factorization

\begin{gather}\label{sec4-diag-1}
\xymatrix{
Y^{\prime}\ar[r]^g\ar[d]^h & Y \ar[d]^{\psi}\\
Z\ar[r]^{\phi} & B
}
\end{gather}
where the general fiber of $\psi$ is a smooth rational curve. Let $\sigma \colon X \rightarrow B$ be the composition $\psi\pi$. Finally notice that since the $g$-exceptional set is contained in fibers of $\phi h$, $Y$ has rational singularities.

In order to show that $B\cong \mathbb{P}^1$ I will show that there exists a rational curve (in general singular) $C$ in $X$ which dominates $B$. The method to find such a rational curve is to show that there exists an integral curve $C$ of $D$ on $X$ which dominates $B$. Then by Corollary~\ref{sec1-cor2}, if the arithmetic genus of $C$ is small compared to the characteristic $p$, $C$ is rational. Finally, integral curves of $D$ will be found by utilizing Proposition~\ref{sec1-prop1}.

By~\cite[Theorem 1.20]{Ek88}, the linear system $|3K_X|$ is base point free. Then by~\cite[Theorem 6.3]{Jou83},~\cite{Za44}, the general member of $|3K_X|$ is of the form $p^{\nu} C$, where $C$ is an irreducible and reduced curve. Suppose that $\nu >0$. Then $K_X^2>p/3$. Assume then from now on that $K_X^2<p/3$. Then the general member of $|3K_X|$ is reduced and irreducible (but perhaps singular). 

Consider  cases with respect to whether $|3K_X|$ contains an integral curve of $D$.

\textbf{Case 1.} Suppose there exists an irreducible and reduced curve $C\in|3K_X|$ which is an integral curve of $D$. In this case I will show that if $K_X^2<(p-3)/12$, then $B\cong \mathbb{P}^1$.

Since $C$ is an integral curve of $D$, $D(I_C) \subset I_C$, where $I_C$ is the ideal sheaf of $C$ in $X$ and hence $D$ induces a vector field on $C$. However it is possible that $D(\mathcal{O}_X)\subset I_C$ and hence the restriction of $D$ on $C$ is zero. Suppose that this is the case. Then $C$ is contained in the divisorial part $\Delta$ of $D$. Then $\Delta=C+\Delta^{\prime}=3K_X+\Delta^{\prime}$, where $\Delta^{\prime}$ is an effective divisor. Then from the adjunction formula~\ref{sec2-eq-2} for $\pi$, we get that 
\begin{gather}\label{sec4-eq-1}
(4-3p)K_X=\pi^{\ast}K_Y+(p-1)\Delta^{\prime}.
\end{gather}
Let $b\in B$ be a general point, $Y_b =\psi^{-1}(b)$ and $X_b=\pi^{\ast}Y_b$. Then $Y_b\cong \mathbb{P}^1$. Moreover, since $Y_b^2=0$, it follows from the adjunction formula that $K_Y \cdot Y_b=-2$. Then from the previous equation we get that
\begin{gather}\label{sec4-eq-2}
(4-3p)K_X\cdot X_b=\pi^{\ast}K_Y\cdot \pi^{\ast}Y_b +(p-1)\Delta^{\prime} \cdot X_b=pK_Y \cdot Y_b+(p-1)\Delta^{\prime}\cdot X_b=\\
-2p+(p-1)\Delta^{\prime}\cdot X_b.\nonumber
\end{gather}
Since $K_X$ is ample, the left hand side of the previous equation is negative while the right hand side is positive if $\Delta^{\prime}\cdot X_b \geq 3$. Hence $\Delta^{\prime}\cdot X_b \leq 2$. 

Suppose that $\Delta^{\prime}\cdot X_b=0$. Then $(4-3p)K_X \cdot X_b =-2p$. The only solutions to this is $K_X\cdot X_b=1$ and $p=4$, and $K_X\cdot X_b=2$ and $p=1$. Both solutions are not possible 
since $p$ is prime. Similar considerations show that the cases $K_X\cdot X_b=1$ and $K_X\cdot X_b=2$ are also impossible. 

Therefore it has been shown that the restriction of $D$ on $C$ is not zero. Now since $C\in |3K_X|$ it follows from the adjunction formula that $\mathrm{p}_a(C)=6K_X^2+1\geq 7$. Suppose that $C$ is smooth. Then the restriction of $D$ on $C$ is zero since smooth curves of genus $\geq 2$ do not have nontrivial global vector fields. Hence $C$ is singular. Then from Proposition~\ref{sec1-prop4} and Corollary~\ref{sec1-cor2},  if $\mathrm{p}_a(C)<(p-1)/2$ then the restriction of $D$ on $C$ fixes its singular points and lifts to the normalization $\bar{C}$ of $C$. Moreover, $\bar{C}\cong \mathbb{P}^1$. In particular this happens if $6K_X^2+1<(p-1)/2$, or equivalently if $K_X^2<(p-3)/12$. Suppose that this is the case. Then $C$ is a rational curve. Also since $C\cdot X_b >0$, for any $b\in B$, $C$ dominates $B$. Hence $B\cong \mathbb{P}^1$.

\textbf{Case 2.} Suppose that $|3K_X|$ does not contain any integral curves of $D$. In this case I will show that if $K_X^2<(p-3)/156$, then $X$ is unirational and $\pi_1(X)=\{1\}$.

\begin{claim} \label{sec4-claim-4}
$K_X \cdot \Delta \leq 3K_X^2$ and $\Delta^2 \leq 9K_X^2$.
\end{claim}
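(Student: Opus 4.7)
The plan is to deduce the second inequality from the first by the Hodge index theorem, and to establish the first by producing an effective divisor in the linear equivalence class $3K_X - \Delta$.

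For the Hodge reduction, $K_X$ is ample with $K_X^2 > 0$, so $(K_X\cdot\Delta)^2 \ge K_X^2\cdot\Delta^2$. Once $K_X\cdot\Delta \le 3K_X^2$ is established, it follows that $\Delta^2 \le (K_X\cdot\Delta)^2/K_X^2 \le 9K_X^2$; the degenerate proportional case $\Delta \equiv \lambda K_X$ is handled directly by $\lambda \le 3$.

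For the main inequality $K_X\cdot\Delta \le 3K_X^2$, it suffices to exhibit an effective $E \sim 3K_X - \Delta$, for then $(3K_X-\Delta)\cdot K_X = E\cdot K_X \ge 0$ by ampleness of $K_X$. To produce $E$, I would combine the adjunction formula $K_X = \pi^{\ast}K_Y + (p-1)\Delta$ with the fact that $\pi^{\ast}\tilde{\Delta}_i = \Delta_i$ (each $\Delta_i$ being an integral divisor of $D$) to identify $\mathcal{O}_X(3K_X - \Delta)$, up to reflexive hulls on $Y$, with the pullback $\pi^{\ast}\mathcal{O}_Y(3K_Y + (3p-4)\tilde{\Delta})$. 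Then by Proposition~\ref{sec1-prop1} applied to $L = \mathcal{O}_Y(3K_Y + 3(p-1)\tilde{\Delta})$, the vector field $D$ gives a $k$-linear operator $D^{\ast}$ on $H^0(X, 3K_X)$ whose kernel is $H^0(Y, L)$. Because $D$ is of additive or multiplicative type, $D^{\ast}$ admits nonzero eigenvectors (the multiplicative case via diagonalizability, the additive case via nilpotency). Each such eigenvector has a $D$-invariant zero divisor in $|3K_X|$, which by the Case 2 hypothesis must be reducible, and the integral components of any such divisor split into two types: components of $\Delta$, and pullbacks $\pi^{\ast}\tilde{F}$ of prime divisors on $Y$ disjoint from $\tilde{\Delta}$. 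A counting argument, comparing $\dim H^0(X, 3K_X)$ (estimated from below by Riemann--Roch via $\chi(\mathcal{O}_X) + 3K_X^2$) with the dimension of the restriction subspace $H^0(\Delta, 3K_X|_{\Delta})$, then guarantees the existence of an eigenvector whose divisor contains all of $\Delta$, yielding $E$ after subtraction.

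The main obstacle is this existence step: showing that at least one eigenvector of $D^{\ast}$ has a zero divisor containing the full divisor $\Delta$. This involves reconciling the scheme-theoretic behavior of $\pi$ along $\Delta$ with the reflexive-hull bookkeeping on the singular surface $Y$, and using the Case 2 structural restriction on $D$-invariant members of $|3K_X|$ to force the subspace $H^0(X, 3K_X - \Delta) \subseteq H^0(X, 3K_X)$ to intersect the kernel of $D^{\ast}$ nontrivially; verifying this delicate passage between the reflexive pullback identification on $Y$ and the divisorial structure on $X$ is the technical heart of the argument.
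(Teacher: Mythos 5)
Your reduction of $\Delta^2 \leq 9K_X^2$ to $K_X\cdot\Delta \leq 3K_X^2$ via the Hodge index theorem is correct and is exactly what the paper does. The problem is the first inequality, where your argument has a genuine gap that you yourself flag: you never establish the existence of an effective divisor in $|3K_X-\Delta|$. Note that effectivity of $3K_X-\Delta$ is strictly stronger than the inequality $K_X\cdot\Delta\leq 3K_X^2$ you are after, and there is no reason for it to hold. The counting argument you sketch is circular: to bound $\dim H^0(\Delta, 3K_X|_{\Delta})$ from above you would need an a priori bound on $K_X\cdot\Delta$, which is precisely the quantity being estimated. There is also a concrete error in the setup: $\mathcal{O}_X(3K_X)$ is \emph{not} a reflexive pullback from $Y$ when $p\neq 3$. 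From $K_X=\pi^{\ast}K_Y+(p-1)\Delta$ and $\pi^{\ast}\tilde{\Delta}=p\Delta$ one gets $\pi^{\ast}(3K_Y+3\tilde{\Delta})=3K_X+3\Delta$, not $3K_X$ (and your choice $L=\mathcal{O}_Y(3K_Y+3(p-1)\tilde{\Delta})$ pulls back to $3K_X+3(p-1)^2\Delta$). This is exactly why the paper, later in Case~2, runs the eigenvector argument of Proposition~\ref{sec1-prop1} on $H^0(\mathcal{O}_X(3K_X+3\Delta))$ rather than on $H^0(\mathcal{O}_X(3K_X))$; Proposition~\ref{sec1-prop1} simply does not apply to $H^0(\mathcal{O}_X(3K_X))$ as you use it.

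The paper's actual proof of the claim is much more elementary and uses the Case~2 hypothesis in the opposite way. Since no member of $|3K_X|$ is an integral curve of $D$, a general member $C\in|3K_X|$ (reduced and irreducible by Bertini, since $|3K_X|$ is base point free and $K_X^2<p/3$) satisfies $\pi^{\ast}\tilde{C}=pC$ with $\tilde{C}=\pi(C)$, and $C\rightarrow\tilde{C}$ is finite and birational, so $\mathrm{p}_a(\tilde{C})\geq\mathrm{p}_a(C)$. Moreover $C$ avoids the isolated singularities of $D$, so $\tilde{C}$ lies in the smooth locus of $Y$ and adjunction applies there. Computing $2\mathrm{p}_a(\tilde{C})-2=K_Y\cdot\tilde{C}+\tilde{C}^2$ by pulling back to $X$ via $K_X=\pi^{\ast}K_Y+(p-1)\Delta$ gives $2\bigl(\mathrm{p}_a(\tilde{C})-\mathrm{p}_a(C)\bigr)=3(p-1)\bigl(3K_X^2-K_X\cdot\Delta\bigr)$, and the genus inequality forces $K_X\cdot\Delta\leq 3K_X^2$. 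You should abandon the effectivity approach and argue via this genus-change computation under the purely inseparable quotient.
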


I proceed to show the claim. Let $C \in |3K_X|$ be a general element. Then $C$ is reduced and irreducible and is not an integral curve of $D$. Let $\tilde{C}=\pi(C)$. 
Then $\pi_{\ast}C=\tilde{C}$, $\pi^{\ast}\tilde{C}=pC$~\cite{RS76} and the map $\pi \colon C \rightarrow \tilde{C}$ is birational. Moreover, since $C$ is general and $|3K_X|$ is base point free, $C$ does not contain any isolated singular points of $D$ and hence $\tilde{C}$ is contained in the smooth locus of $Y$. Then 
from the adjunction formula for $\tilde{C}$ in $Y$ we get that
\begin{gather}\label{sec4-eq-3}
2\mathrm{p}_a(\tilde{C})-2=K_Y \cdot \tilde{C}+\tilde{C}^2=\pi^{\ast}K_Y \cdot C +pC^2=K_X\cdot C-(p-1)\Delta\cdot C +pC^2 =\\
3K_X^2-3(p-1)K_X\cdot \Delta +9pK_X^2=(9p+3)K_X^2-3(p-1)K_X \cdot \Delta.\nonumber
\end{gather}
Considering now that $2\mathrm{p}_a(C)-2 =12K_X^2$ we get that
\begin{gather}\label{sec4-eq-5}
2(\mathrm{p}_a(\tilde{C})-\mathrm{p}_a(C))=3(p-1)( 3K_X^2-K_X \cdot \Delta).
\end{gather}
But since the map $C \rightarrow \tilde{C}$ is birational, it follows that $\mathrm{p}_a(\tilde{C})\geq \mathrm{p}_a(C)$. Therefore from the equation~\ref{sec4-eq-5} it follows that $K_X\cdot \Delta \leq 3K_X^2$. Then from the Hodge index theorem we get that $\Delta^2 \leq (K_X\cdot \Delta)/K_X^2 \leq 9K_X^2$, as claimed.

Next notice that if $L$ is any line bundle on $X$, then $L^p=\pi^{\ast}M$, where $M$ is a line bundle on $Y$. Then in view of this observation, the adjunction formula~\ref{sec2-eq-2} for $\pi$ becomes
\begin{gather}\label{sec4-eq-6}
K_X+\Delta=\pi^{\ast}K_Y+p\Delta=\pi^{\ast}(K_Y+\tilde{\Delta}),
\end{gather}
where $\tilde{\Delta} \subset Y$ is an effective divisor such that $\pi^{\ast}\tilde{\Delta}=p\Delta$. Hence by Proposition~\ref{sec1-prop1}, there exists a $k$-linear map
\[
D^{\ast} \colon H^0(\mathcal{O}_X(3K_X+3\Delta)) \rightarrow H^0(\mathcal{O}_X(3K_X+3\Delta)).
\]

Let $C \in |3K_X+3\Delta|$ be a curve which corresponds to an eigenvector of $D^{\ast}$. Then by Proposition~\ref{sec1-prop1}, $C$ is an integral curve of $D$. Moreover, by using Claim~\ref{sec4-claim-4} it follows that 
\begin{gather}\label{sec4-eq-7}
K_X\cdot C=3K_X^2+K_X\cdot \Delta \leq 12K_X^2,\\
C^2 =9K_X^2+9\Delta^2+18 K_X \cdot \Delta\leq 144K_X^2 \nonumber
\end{gather}
and therefore from the adjunction formula, $\mathrm{p}_a(C)\leq 78K_X^2+1$. Let now $C=\sum_{i=1}^s n_i C_i$ be the decomposition of $C$ into its prime divisors. Then from the equation~\ref{sec4-eq-7} it follows that $K_X \cdot C_i \leq 12K_X^2$ and from the Hodge index theorem that $C_i^2\leq 144K_X^2$, for all $i=1,\ldots, s$. Therefore $\mathrm{p}_a(C_i)\leq 78K_X^2+1$, for all $i=1,\ldots, s$. 
Then by Corollary~\ref{sec1-cor1}, 
if $K_X^2<p/12$, $D$ induces vector fields on $C_i$, for all $i=1,\ldots, s$. Moreover, if $K_X^2<(p-3)/156$, then $\mathrm{p}_a(C) < (p-1)/2$ and hence from Proposition~\ref{sec1-prop4} it follows that $D$ fixes every singular point of $C_i$, for all $i=1,\ldots, s$. 

From now on assume that $K_X^2<(p-3)/156$ and hence $D$ induces vector fields on each $C_i$ and moreover it fixes its singular points and hence it lifts to the normalization $\bar{C}_i$ of $C_i$. 

Next I will show that for every $i=1,\ldots, s$, there exists a fixed point of $D$ on $C_i$. Suppose that this was not the case and that there exists some $1\leq i \leq s$ such that $D$ has no fixed points on $C_i$. Then if $\tilde{C}_i=\pi(C_i)$, $\tilde{C}_i$ is in the smooth locus of $Y$. Since there are no fixed points of $D$ on $C$, $C \cdot \Delta =0$. Then from the adjunction formula for $\pi$ we get that 
\[
K_X\cdot C_i=\pi^{\ast}K_Y\cdot C_i=\pi^{\ast}K_Y\cdot \pi^{\ast}\tilde{C_i}=pK_Y\cdot \tilde{C_i},
\]
and therefore $K_X\cdot C_i \geq p$. On the otherhand it has been shown that $K_X\cdot C_i \leq 12 K_X^2$. But we are assuming that $K_X^2<(p-3)/156$. Hence $K_X\cdot C_i \leq 12K_X^2\leq p/13$. But this implies that $p<p/13$, which is impossible. Hence there exists fixed points of $D$ on every $C_i$. Hence if the restriction on $D$ on $C_i$ is not zero (equivalently if $C_i$ is not a component of $\Delta$), then by Corollary~\ref{sec1-cor2}, $\bar{C}_i\cong \mathbb{P}^1$.

Now let $\Delta^{\prime}=\sum_{i=1}^{\nu}n_i C_i$, where $C_i$, $1\leq i \leq \nu \leq s$ are the irreducible components of $C$ that are also components of $\Delta$ (and hence the restriction of $D$ on $C_i$ is zero). Let also $Z=\sum_{j=\nu +1}^{s}n_i C_i$, where $C_j$ are the irreducible components of $C$ which are not contained in $\Delta$ and therefore the restriction of $D$ on $C_j$, $j \geq \nu +1$, is not zero (if $\nu =s$ then $Z=0$). Then $C=\Delta^{\prime} +Z$. 

Next I will show that if $K_X^2<p/8$ (which holds under the assumptions $K_X^2<(p-3)/156$), then $Z\not= 0$ and that there is a component of it which dominates $B$. Hence $B$ is rational. 

Suppose that this is not true and that either $Z=0$ or no component of $Z$ dominates $B$. Therefore either $Z=0$ or $Z$ is contained in a finite union of fibers of $\psi h\colon X\rightarrow B$. Let $F$ be a general fiber of $\psi h$. Then in both cases $F \cdot Z=0$. Then if we write $3K_X=C-3\Delta=\Delta^{\prime}+Z-3\Delta$, the adjunction formula for $\pi$ becomes
\[
\Delta^{\prime}+Z=3\pi^{\ast}K_Y +3p\Delta.
\]
Intersecting this with a general fiber $F$ and taking into consideration that $F\cdot Z=0$ and that $F\cdot \pi^{\ast}K_Y=-2p$ we find that
\begin{gather}\label{sec4-eq-8}
\Delta^{\prime}\cdot F=-6p+3p( \Delta \cdot F).
\end{gather}
Now 
\begin{gather}\label{sec4-eq-9}
\Delta^{\prime}\cdot F=\sum_{i=1}^{\nu} n_i( C_i \cdot F)\leq m \left(\sum_{i=1}^{\nu}(C_i \cdot F)\right) \leq m \Delta \cdot F,
\end{gather}
where $m$ is the maximum among the $n_1, \ldots ,n_{\nu}$ such that $C_i \cdot F \not= 0$. Notice that it is not possible that $C_i \cdot F=0$, for all $i=1,\ldots, \nu$. If this was the case, then $\Delta^{\prime}\cdot F=0$. But since also we assume that $Z\cdot F=0$, it would follow that $C\cdot F=0$ and hence $(K_X+\Delta)\cdot F=0$. But then
\[
K_X \cdot F=-\Delta \cdot F \leq 0,
\]
for a general fiber $F$. But this is impossible since $K_X$ is ample. Hence $\Delta^{\prime}\cdot F>0$ and hence $m>0$.

Next I will show that $m \leq 12K_X^2$. Indeed. From the definition of $\Delta^{\prime}$ and the equation~\ref{sec4-eq-7} it follows that
\[
m \leq \sum_{i=1}^{\nu}n_i\leq \sum_{i=1}^{\nu} n_i (K_X \cdot C_i) =K_X \cdot \Delta^{\prime} \leq K_X \cdot C \leq 12K_X^2,
\]
as claimed. Then from the equations~\ref{sec4-eq-8},~\ref{sec4-eq-9} it follows that
\begin{gather}\label{sec4-eq-10}
(12K_X^2-3p)\Delta\cdot F +6p >0.
\end{gather}
Notice now that from the adjunction formula for $\pi$ it follows that
\[
K_X\cdot F=\pi^{\ast}K_Y\cdot F+(p-1)\Delta \cdot F=-2p+(p-1)\Delta\cdot F.
\]
Then since $K_X \cdot F>0$, it follows that $\Delta \cdot F \geq 3$. Now recall that we are assuming that $K_X^2<(p-3)/156$. In particular, $K_X^2<p/12$. Then it is easy to see that
\[
(3p-12K_X^2)\Delta \cdot F-6p >0,
\]
which is a contradiction to the equation~\ref{sec4-eq-10}. Therefore it is not possible that $Z\cdot F=0$. Hence there exists a component $C_i$ of $C$ such the restriction of $D$ on $C$ is not zero and $C_i$ dominates $B$. Then since the normalization of $C_i$ is $\mathbb{P}^1$, it follows that $B\cong \mathbb{P}^1$. This concludes the proof of Theorem~\ref{sec4-theorem-1}.


\end{document}